\def\smartqed{\hfill $\triangle$}
\def\inte{\mathop{\rm Int }}
\def\dom{\mathop{\rm Dom\,}}
\renewcommand{\thetheorem}{\arabic{section}.\arabic{theorem}}
\newcommand{\R}{\mathbb R}
\newcommand{\N}{\mathbb N}
\newcommand{\Multi}{\,\,\lower 1pt
\hbox{$\overrightarrow{\longrightarrow}$}\,\,}
\begin{document}

\title{Local nonsmooth Lyapunov pairs for first-order evolution differential inclusions}


\author{Samir Adly \and
    Abderrahim Hantoute \and Michel Th\'era 
}


\institute{S. Adly\at
Universit\'e de Limoges, Laboratoire XLIM.
123, avenue Albert Thomas, 87060 Limoges CEDEX,
          France\\
              \email{samir.adly@unilim.fr}           
           \and
           A. Hantoute \at
             University of Chile, CMM Piso 7, Avenida Blanco Encalada 2120, Santiago, Chile .\at
                \email{ahantoute@dim.uchile.cl}
                \and
                M. Th\'era \at
                Universit\'e de Limoges, Laboratoire XLIM.
123, avenue Albert Thomas, 87060 Limoges CEDEX,
          France\\
                \email{michel.thera@unilim.fr}
}

\date{Received: date / Accepted: date}

\maketitle

\begin{abstract}
 The general theory of Lyapunov's stability of first-order differential inclusions in Hilbert spaces has been studied by the authors  in a previous work \cite{AHT2012}. This new contribution  focuses on the natural case when the maximally monotone operator governing the given  inclusion has a domain with nonempty interior.
 This setting permits to have nonincreasing Lyapunov functions on the whole trajectory of the solution to the given differential inclusion.  It also allows some more explicit  criteria for Lyapunov's pairs. Some consequences to the viability of closed sets  are  given, as well as some useful cases relying on the continuity  or/and convexity of the involved functions. Our analysis makes use of standard tools from convex and variational analysis.
\keywords{Evolution differential inclusions, lower semi-continuous
 functions, invariant sets,  proximal subdifferential, Clarke  subdifferential, Fr\'echet  subdifferential, limiting proximal subdifferential, maximally monotone operator, strong solution, weak solution,  Lyapunov pair.
}
 \subclass{37B25, 47J35, 93B05}
\end{abstract}

\section{Introduction and notations} In various  applications modeled by ODE's, one may be forced to work with systems that have non-differentiable solutions. Also,    
 Lyapunov's functions,   that is    positive definite functions whose decay along the trajectories of the system, 
which are  used to establish a stability property of the system,  may  be nondifferentiable.  The need to extend the classical differentiable Lyapunov's stability to the nonsmooth case is unavoidable when studying stability properties of discontinuous systems. In practice, many systems in physics, engineering, biology etc exhibit generally nonsmooth energy functions, which   are usually a typical candidates for Lyapunov functions; thus elements of nonsmooth analysis become essential \cite{ClarketalBook,SP94,BC99,KPS2006}. 
A typical example is given by   the case  of  piecewise linear dynamical systems  called\textit{ Linear Complementarity Systems}  (LCS) for which the analysis of asymptotic and exponential stability  uses a  piecewise quadratic Lyapunov function \cite{KPS2006}.
Let us remind that LCS are defined as follows:
 $$LCS(A,B,C,D)\quad \quad \left\{\begin{array}{l}
  {\displaystyle\dot{x}(t;x_{0})=Ax(t)+Bu(t) },\;\;x(t_0)=x_0,\\
0\leq u(t)\perp Cx+Du\geq 0,
\end{array}
\right.
$$
where $A\in\mathbb{R}^{n\times n}$, $B\in\mathbb{R}^{n\times m}$, $C\in\mathbb{R}^{m\times n}$ and $D\in\mathbb{R}^{m\times m}$ are real matrices, $x_0$ is the initial condition, $\dot{x}$ is the time derivative of the trajectory $x(t)$ and $a\perp  b$ means that the two vectors $a$  and $b$ are orthogonal.
Linear and nonlinear complementarity problems belong to the more general mathematical formalism of  \textit{Differential Variational Inequalities} (DVI),  introduced by J.S. Pang and D. Stewart \cite{PS08}. It is a combination of an ordinary differential equation (ODE) with a variational inequality or a complementarity constraint. A DVI consists to find trajectories $t\mapsto x(t)$ and $t\mapsto u(t)$ such that
$$DVI(f,F,K)\quad \quad \left\{
\begin{array}{l}
  {\color{black}\dot{x}(t)=f(t,x(t),u(t)) },\;\;x(t_0)=x_0,\\
{\color{black}\langle F(t,x(t),u(t)),v-u(t) \rangle} \geq 0,\;\forall v\in K, u(t)\in K\mbox{ for a.e. } t\geq t_0,
\end{array}
\right.
$$
where $K$ is a closed convex subset of   a Hilbert space $H$, $f$ and $F$ are {\color{black}given mappings. }
When $K$ is a closed convex cone, then problem $DVI(f,F,K)$ is equivalent to a  Differential Complementarity Problem (DCP):
$$DCP(f,F,K)\quad \quad \left\{
\begin{array}{l}
  {\color{black}\dot{x}(t)=f(t,x(t),u(t)) },\;\;x(t_0)=x_0,\\
K\ni u(t)\perp F(t,x(t),u(t))\in K^*,\mbox{ a.e. } t\geq t_0.
\end{array}
\right.
$$
Since  DVI and DCP formalisms  unify  several known  mathematical problems such that ordinary differential {\color{black}equations with } discontinuous right-hand term, differential
algebraic  equations, dynamic complementarity problems etc .. (see \cite{cps1,cps2} for more details), 
it was proved to be powerful for the treatment of many problems in science and engineering such that: unilateral contact problems in mechanics, finance, traffic networks, electrical circuits etc \ldots. 
According  also to the fact that LCS formalism has   many  of applications in various  areas  including for instance robotics, economics, finance, non smooth  mechanics, etc (see Camlibel, Pang and  Shen,  \cite{KPS2006} and the monograph by Facchinei and   Pang, \cite{FP2003}), it has received recently a great interest from the  mathematical programming and control communities from the theoretical and numerical point of view. 
\vskip 2mm
Instead of considering LCSs or DVIs,   throughout  this contribution we are interested in the general framework of infinite-dimensional dynamical systems, that is systems  of the form:
\begin{equation}
\dot{x}(t;x_{0})\in f(x(\cdot; x_{0}))-Ax(\cdot; x_{0}),\text{ \ }x_{0}%
\in\operatorname*{cl}(\dom  A)\text{ \ }a.e.\text{ \ }t\geq0.
\label{id}%
\end{equation}
Here, and thereafter, $\operatorname*{cl}(\dom A)$ is the 
closure of the domain of a maximally monotone operator $A:H\rightrightarrows H$
defined on a   real  Hilbert space $H$,   possibly nonlinear and multivalued  with domain $\dom A$ and $f$ is a Lipschitz  continuous mapping defined on
$\operatorname*{cl}(\dom  A).$



A pair of proper  lower semicontinuous (lsc for short)  functions $V,W:H\rightarrow\mathbb{R}\cup\{+\infty\}$ is
said to form a Lyapunov pair for (\ref{id}) if for all $x_{0}\in
\operatorname*{cl} (\dom  A)$ the solution of (\ref{id}), in a
sense that will be precised in Section \ref{santiago}, denoted by
$x(\cdot; \cdot,x_{0})$ satisfies
\begin{equation}
V(x(t; x_{0}))-V(x(s; x_{0}))+\int_{s}^{t}W(x(\tau;x_{0}))d\tau\leq0 \text{\ for
all }t\geq s\geq0. \label{vw}%
\end{equation}
Observe that when $W\equiv0$ one recovers the classical notion of Lyapunov
functions; e.g., \cite{smirnov02}. More generally, instead of (\ref{vw}), we
are going to consider functions $V,W$ satisfying for some $a\geq0$
\[
e^{at}V(x(t; x_{0}))-e^{as}V(x(s;x_{0}))+\int_{s}^{t}W(x(\tau;x_{0}%
))d\tau\text{\ for all }t\geq s\geq0.
\]
In this case, the (weighted) pair $(V,W)$ will be refered to as a $a$-Lyapunov
pair. The main motivation in using $a$-Lyapunov pairs instead of simply
functions is that many stability concepts for the equilibrium sets of
(\ref{id}), namely stability, asymptotic or finite-time stability, can be
obtained just by choosing appropriate functions $W$ in (\ref{vw}). The weight
$e^{at}$ is useful for instance when exponential stability is concerned. So,
even in autonomous systems like those of (\ref{id}), the function $W$ or the
weight $e^{at}$ may be of     a certain  utility since, in some sense, it emphasizes the
decreasing of the Lyapunov function $V$.

The method of Lyapunov functions  is a corner stone of  the study
of the controllability and stabilizability of control systems. Its  history is rich and has  been described in several places  and various seminal contributions has been made  to the subject.   We refer to  Clarke \cite{Clarke2009,Clarke} for  an  overview 
 of the recent developments of  the theory where he pointed out that   for nonlinear systems, Lyapunov's method  turns out to be essential to consider nonsmooth Lyapunov functions, even if the
underlying control dynamics are themselves smooth.

  Over the years, among the  various  contributions, Kocan \& Soravia  \cite{Kocanetall02},  characterized Lyapunov's pairs in terms of viscosity solutions of a related  partial differential inequality.

 Another well-established approach consists of characterizing Lyapunov's pairs by  means of the contingent derivative of the maximally monotone operator $A$, see for instance C\^{a}rj\u{a} \& Motreanu  \cite{carja-motreanu1}, for the case of a maximally linear monotone operator and also  when $A$ is a multivalued m-accretive operator on an arbitrary  Banach space  \cite{Carjaetal09-flow}.  In these approaches the authors  used   tangency and flow-invariance arguments combined with a priori estimates and approximation.

The  starting point of this contribution is  the paper  by Adly \& Goeleven
  \cite{Adlyetal04} 
  in which smooth Lyapunov functions were
used
 in the framework of the
  second order differential equations,
and non-linear mechanical systems with frictional unilateral constraints.

In   this article  we  provide  a different   approach that don't make use of   viscosity solutions or contingent derivatives associated to the operator $A$.  Our objective is to emphasize   our previous contribution \cite{AHT2012} to the setting where the involved maximally monotone operator has a domain with nonempty interior. This case  includes the finite dimensional framework  since in this case  the relative interior of the domain of the operator is always nonempty. Moreover, the criteria for Lyapunov's pairs are checked only in the interior of the domain (or the relative interior) instead of the closure of the whole  domain  as in \cite{Adlyetal04}.  In contrary to \cite{Adlyetal04}, this setting also ensures obtaining global Lyapunov's pairs   and permits in this way to control the whole trajectory of the solution to the given differential inclusion.

The summary of the paper is as follows. In Section 2 we introduce the main
tools and basic results used in the paper. In Section 3 we give a new primal
and dual criteria for lower semicontinuous Lyapunov pairs. This is achieved in Proposition
\ref{heartlocal} and  Theorem \ref{heartlocalfinite}. In Section 4, we make a review of
some old and recent criteria for Lyapunov pairs. Section 5 is dedicated to
complete the proofs of the main results given in Section 3.

\section{Notation and main tools}

\   Throughout the paper, $H$ is  a (real) Hilbert space endowed with the inner (or scalar)
product $\langle\cdot,\cdot\rangle$ and the associated norm  is denoted by $\left\Vert
\cdot\right\Vert $.   We identify $H^{\ast}$ (the space of continuous linear
functionals defined on $H$) to $H,$ and  we denote the weak limits ($w-\lim,$ for
short) by the symbol $\rightharpoonup$ to distinguish it from the usual symbol
$\rightarrow$ used for strong limits. The zero vector in $H$ is denoted by
$\theta.$

 We start this section  by reviewing   some notations   used  throughout the paper. Given a
nonempty set $S\subset H$ (or $S\subset H\times\mathbb{R}$), by
$\operatorname*{co}S$, $\operatorname*{cone}S$, and $\operatorname*{aff}S$, we
denote the \emph{convex hull}, the \emph{conic hull}, and the \emph{affine
hull} of the set $S$, respectively. Moreover, $\inte S$ is the
topological \emph{interior} of $S$, and $\operatorname*{cl}S$ and
$\overline{S}$ are indistinctly used for the \emph{closure} of $S$ (with
respect to the norm topology on $H)$.
 We also use $\operatorname*{cl}^{w}S$
or $\overline{S}^{w}$ when we deal with the closure of $S$ with respect to the
weak topology. We note $\operatorname*{ri}S$ the (topological) \emph{relative
interior} of $S$, i.e., the interior of $S$ in the topology relative to
$\operatorname*{aff}S$ whatever this set is nonempty (see\cite[Chapter 6] {roc70} for more on this fundamental
notion).
For $x\in H$ (or $x\in
H\times\mathbb{R}$), $\rho\geq0,$ $B_{\rho}(x)$ is the open ball with center
$x$ and radius $\rho,$ and $\overline{B}_{\rho}(x)$ is the closure of
$B_{\rho}(x)$, while $B:=B_{1}(\theta)$ stands for the unit open ball. For
$a,b\in\overline{\mathbb{R}} :=\mathbb{R\cup\{+\infty}, \mathbb{-\infty\}}$ we
denote $[a,b)$ the interval closed at $a$ and open at $b\ $($[a,b],(a,b),...$
are defined similarly); hence $\mathbb{R}_{+}:=[0,\infty)$. Finally, for
$\alpha\in\mathbb{R},$ we note $\alpha^{+}$ for $\max\{0,\alpha\}$.

Our  notation   is the standard one used    in convex and variational analysis and  in monotone
operator theory; see, e.g., \cite{BrezisBook,RockBook2002}. The
\emph{indicator function} of $S$ is the function defined as
\[
\mathrm{I}_{S}(x):=\left\{
\begin{array}
[c]{ll}%
0 & \text{if }x\in S\\
+\infty & \text{otherwise.}%
\end{array}
\right.
\]
The \emph{distance function} to $S$ is denoted by
\[
d(x,S):=\inf\{\left\Vert x-y\right\Vert \mid y\in S\},
\]
and the \emph{orthogonal projection} on $S$, $\pi_{S}$, is defined as
\[
\pi_{S}(x):=\{y\in S\mid\left\Vert x-y\right\Vert =d(x,S)\}.
\]
If $S$ is closed and convex, $S_{\infty}\subset H$ (or $H\times\mathbb{R}%
$)\ denotes its \emph{recession cone}:
\[
S_{\infty}:=\{y\mid x+\lambda y\in S\text{ for some }x\text{ and all }
\lambda\geq0\},
\]
while, $S^{\circ}\subset H$ (or $H\times\mathbb{R}$) denotes the polar of $S$
given by
\[
S^{\circ}:=\{y\mid\langle y,v\rangle\leq1\text{ for all }v\in S\}.
\]
Given a function $\varphi:H\rightarrow\overline{\mathbb{R}}$,
its\emph{\ (effective) domain} and \emph{epigraph} are defined by
\[
\dom \varphi:=\{x\in H\mid\varphi(x)<+\infty\},
\]
\[
\operatorname*{epi}\varphi:=\{(x,\alpha)\in H\times\mathbb{R}\mid
\varphi(x)\leq\alpha\}.
\]
For $\lambda\in\mathbb{R}$, the \emph{open sublevel set} of $\varphi$ at
$\lambda$ is
\[
\lbrack\varphi>\lambda]:=\{x\in H\mid\varphi(x)>\lambda\};
\]
$[\varphi\geq\lambda],$ $[\varphi\leq\lambda]$, and $[\varphi<\lambda]$ are
defined similarly. We say that $\varphi$ is proper if $\dom %
\varphi\neq\emptyset$ and $\varphi(x)>-\infty$ for all $x\in H.$ We say that
$\varphi$ is convex if $\operatorname*{epi}\varphi$ is convex, and (weakly)
lower semicontinuous (lsc, for short) if $\operatorname*{epi}\varphi$ is
closed with respect to the (weak topology) norm-topology on $H$. We denote
\[
\mathcal{F}(H):=\{\varphi:H\rightarrow\overline{\mathbb{R}}\mid\varphi\text{
is proper and lsc}\},
\]%
\[
\mathcal{F}_{w}(H):=\{\varphi:H\rightarrow\overline{\mathbb{R}}\mid
\varphi\text{ is proper and weakly lsc}\};
\]
$\mathcal{F}(H;{\mathbb{R}}_{+})$ and $\mathcal{F}_{w}(H;{\mathbb{R}}_{+})$
stand for the subsets of nonnegative functions of $\mathcal{F}(H)$ and
$\mathcal{F}_{w}(H)$, respectively.

  As maximally monotone set-valued  operators play an important role in this work, it is useful to recall some of basic definitions and  some of their properties. More generally, they  have frequently shown themselves to be a key class of objects in both modern Optimization and Analysis; see, e.g.,    \cite{BC2011,BrezisBook,RockBook2002,simons,Borwein10,BorYao2012}.
\vskip 2mm\noindent
For an operator $A:H\rightrightarrows H,$ the \emph{domain} and the
$\emph{graph}$ of $A$ are given respectively by
\[
\dom A:=\{z\in H\mid Az\neq\emptyset\}\text{ and }%
\operatorname*{gph}A:=\{(x,y)\in H\times H\mid y\in Ax\};
\]
for notational simplicity we identify the operator $A$ to its graph. The
$\emph{inverse}$ $\emph{operator}$ of $A$, denoted by $A^{-1}$, is defined as
\[
(y,x)\in A^{-1}\Longleftrightarrow(x,y)\in A.
\]
We say that an  operator $A$ is   \emph{monotone} if
\[
\langle y_{1}-y_{2},x_{1}-x_{2}\rangle\geq0\text{ for all}
(x_{1},y_{1}),(x_{2},y_{2})\in A,
\]
and \emph{maximally monotone} if $A$  is monotone and has no proper monotone extension (in the sense of graph inclusion).  If $A$ is maximally monotone, it is well known (e.g.,
\cite{simons}) that $\overline{\dom A}$ is convex, and $Ax$ is
convex and closed for every $x\in\dom A$. Moreover, if
$\inte (\dom A)\neq\emptyset$, then
$\inte (\dom A)$ is convex, $\inte %
(\dom A)=\inte (\overline{\dom A})$,
and $A$ is bounded locally on $\inte (\dom A)$. Note that the 
domain or the range of a maximally monotone
operator may fail to be convex, see, e.g.,\cite[page 555]{RockBook2002}.
In particular, if $A$ is the subdifferential $\partial\varphi$ of some lower semicontinuous (lsc for short)
convex and  proper function $\varphi:H\rightarrow\overline{\mathbb{R}}$, then $A $
is a classical  example of a maximally monotone operator,  as is a  linear operator with a positive symmetric part.
We know that
\[
\dom A\subset\dom \varphi\subset\overline
{\dom  \varphi}=\overline{\dom A}.
\]
For $x\in\dom A,$ we shall use the notation $(Ax)^{\circ}$ to
denote the \emph{principal} \emph{section} of $A,$ i.e., the set of points of
minimal norm in $Ax$.

Nonsmooth and variational analysis play a central role in this study. Hence,
we need to recall briefly some concepts used through the paper. More details
can be found for instance in
\cite{BorweinZhuBook,ClarketalBook,ClarkeBook,MordokBook,RockBook2002}. We assume that
$\varphi\in\mathcal{F}(H),$ and take $x\in\dom \varphi.$

A vector $\xi\in H$ is called a \emph{proximal subgradient} of $\varphi$ at
$x$, written $\xi\in\partial_{P}\varphi(x),$ if there are $\rho>0$ and
$\sigma\geq0$ such that
\[
\varphi(y)\geq\varphi(x)+\langle\xi,y-x\rangle-\sigma\left\Vert y-x\right\Vert
^{2}\text{ \ for all }y\in B_{\rho}(x);
\]
the \emph{domain of }$\partial_{P}\varphi$ is then given by
\[
\dom \partial_{P}\varphi:=\{x\in H\mid\partial_{P}\varphi
(x)\neq\emptyset\}.
\]
The set $\partial_{P}\varphi(x)$ is convex, possibly empty and not necessarily closed.

A vector $\xi\in H$ is called a \emph{Fr\'echet subgradient} of $\varphi$ at
$x$, written $\xi\in\partial_{F}\varphi(x),$ if
\[
\varphi(y)\geq\varphi(x)+\langle\xi,y-x\rangle+o(\left\Vert y-x\right\Vert ).
\]
Associated to  proximal and  Fr\'echet subdifferentials, limiting
objects have been introduced. A vector $\xi\in H$ belongs to the
\emph{limiting proximal subdifferential} of $\varphi$ at $x$, written
$\partial_{L}\varphi(x)$, if there exist sequences $(x_{k})_{k\in{\mathbb{N}}%
}$ and $(\xi_{k})_{k\in{\mathbb{N}}}$ such that $x_{k}\underset {\varphi}{\rightharpoonup} 
x$ (that is, $x_{k}\rightharpoonup x$ and $\varphi(x_{k}%
)\rightarrow\varphi(x)$), $\xi_{k}\in\partial_{P}\varphi(x_{k})$ and $\xi
_{k}\rightharpoonup\xi.$

A vector $\xi\in H$ is called a \emph{horisontal subgradient} of $\varphi$ at
$x$, written $\xi\in\partial_{\infty}\varphi(x),$ if there exist sequences
$(\alpha_{k})_{k\in{\mathbb{N}}}\subset{\mathbb{R}}_{+}, (x_{k} )_{k\in
{\mathbb{N}}}$ and $(\xi_{k} )_{k\in{\mathbb{N}}}$ such that $\alpha
_{k}\rightarrow0^{+},$ $x_{k} \underset {\varphi}{\rightarrow} x$, $\xi_{k}\in
\partial_{P}\varphi(x_{k})$ and $\alpha_{k}\xi_{k}\rightharpoonup\xi.$

The \emph{Clarke subdifferential} of $\varphi$ at $x$ is defined by the
following so-called \emph{representation formula}; see, e.g., Mordukhovich
\cite{MordokBook} and Rockafellar \cite{RockBook2002},
\[
\partial_{C}\varphi(x)=\overline{\operatorname*{co}}^{w}\{\partial_{L}%
\varphi(x)+\partial_{\infty}\varphi(x)\}.
\]
From a geometrical point of view, if $S\subset H$ is closed and $x\in S,$ the
\emph{proximal normal cone to} $S$ \emph{at} $x$ is
\[
\mathrm{N}_{S}^{P}(x):=\partial_{P}\mathrm{I}_{S}(x).
\]
We also denote  by $\mathrm{\widetilde{N}} _{S}^{P}(x)$ the subset of
$\mathrm{N}_{S}^{P}(x)$ given by
\[
\mathrm{\widetilde{N}} _{S}^{P}(x):=\{\xi\in H\mid\langle\xi,y-x\rangle
\leq\left\Vert y-x\right\Vert ^{2}\text{ \ for all }y\in S\text{ closed to
}x\}.
\]
It can be proved; e.g., \cite{ClarkeBook}, that
\[
\mathrm{N}_{S}^{P}(x)=\left\{
\begin{array}
[c]{l}%
\operatorname*{cone}(\pi_{S}^{-1}(x)-x),\text{ \ \ if }\pi_{S}^{-1}%
(x)\neq\emptyset,\\
\{\theta\}\text{ \ \ if }\pi_{S}^{-1}(x)=\emptyset,
\end{array}
\right.
\]
where $\pi_{S}^{-1}(x):=\{y\in H\setminus S\mid x\in\pi_{S}(y)\}.$

Similarly, $\mathrm{N}_{S}^{L}(x):=\partial_{L}\mathrm{I}_{S}(x)$
($=\partial_{\infty}\mathrm{I}_{S}(x)$) is the \emph{limiting normal cone to
}$S$ \emph{at }$x,$ and $\mathrm{N}_{S}^{C}(x):=\overline{\operatorname*{co}%
}^{w}\{\mathrm{N}_{S}^{L}(x)\}$ is the \emph{Clarke normal cone to }%
$S$\emph{\ at }$x.$

In that way, the above subdifferentials of $\varphi\in\mathcal{F}(H)$\ can be
geometrically described as
\begin{align*}
\partial_{P}\varphi(x)  &  =\{\xi\in H\mid(\xi,-1)\in\mathrm{N}%
_{\operatorname*{epi}\varphi}^{P}(x,\varphi(x))\},\\
\partial_{\infty}\varphi(x)  &  =\{\xi\in H\mid(\xi,0)\in\mathrm{N}%
_{\operatorname*{epi}\varphi}^{P}(x,\varphi(x))\}.
\end{align*}

We call \emph{contingent cone} \emph{to} $S$\emph{\ at }$x\in S$ (or the
\emph{Bouligand tangent cone}), written $\mathrm{T}_{S}(x),$ the cone given
by
\[
\mathrm{T}_{S}(x):=\{\xi\in H\mid x+\tau_{k}\xi_{k}\in S\text{ for some }%
\xi_{k}\rightarrow\xi\text{ and } \tau_{k}\rightarrow0^{+}\}.
\]
The Dini directional derivative of the function $\varphi$ $(\in\mathcal{F}%
(H))$ at $x\in\dom \varphi$ in the direction $v\in H$ is given
by
\[
\varphi^{\prime}(x,v)=\liminf_{t\rightarrow0^{+},w\rightarrow v}\frac
{\varphi(x+tw)-\varphi(x)}{t}.
\]
Hence, $\operatorname*{epi}\varphi^{\prime}(x,\cdot)=\mathrm{T}%
_{\operatorname*{epi}\varphi} (x,\varphi(x)).$ The \emph{G\^{a}teaux
derivative} of $\varphi$ at $x$ is a linear continuous form on $H,$ written
$\varphi_{G}^{\prime}(x),$ satisfying
\[
\lim_{t\rightarrow0^{+}}\frac{\varphi(x+tv)-\varphi(x)}{t}=\langle\varphi
_{G}^{\prime}(x),v\rangle\text{ \ \ for all }v\in H.
\]
We close this section by giving some properties of the subdifferential sets
defined above that  will be used later on. First, it follows easily from the
definitions that
\[
\partial_{P}\varphi(x)\subset\partial_{F}\varphi(x)\subset\partial_{L}%
\varphi(x)\subset\partial_{C}\varphi(x).
\]
If $\varphi$ is convex, then
\[
\partial_{P}\varphi(x)=\partial_{C}\varphi(x)=\partial\varphi(x),
\]
where $\partial\varphi(x)$ is the usual  \emph{Moreau-Rockafellar}
subdifferential of $\varphi$ at $x:$
\[
\partial\varphi(x):=\{\xi\in H\mid\varphi(y)-\varphi(x)\geq\langle
\xi,y-x\rangle\text{ for all }y\in H\}.
\]
If $\varphi\in\mathcal{F}(H)$ is G\^{a}teaux-differentiable at $x\in
\dom \varphi,$ we have
\[
\partial_{P}\varphi(x)\subset\{\varphi_{G}^{\prime}(x)\}\subset\partial
_{C}\varphi(x).
\]
If $\varphi$ is $C^{1}$ then
\[
\partial_{P}\varphi(x)\subset\{\varphi^{\prime}(x)\}=\partial_{C}%
\varphi(x)\text{ and } \partial_{\infty}\varphi(x)=\{\theta\}.
\]
If $\varphi$ is $C^{2}$ then
\[
\partial_{P}\varphi(x)=\partial_{C}\varphi(x)=\{\varphi^{\prime}(x)\}.
\]
In particular, if $\varphi:=d(\cdot,S)$ with $S\subset H$ closed, for $x\in S
$ we have that
\[
\partial_{C}\varphi(x)=\mathrm{N}_{S}^{C}(x)\cap B,
\]
while, for $x\not \in S$ such that $\partial_{P}\varphi(x)\neq\emptyset,$
$\pi_{S}(x)$ is a singleton and (e.g., \cite{ClarketalBook})
\[
\partial_{P}\varphi(x)=\frac{x-\pi_{S}(x)}{\varphi(x)};
\]
hence
\[
\partial_{L}\varphi(x)=\left\{  w-\lim_{k}\frac{x_{k}-\pi_{S}(x_{k})}%
{\varphi(x)};\, x_{k}\rightharpoonup x\right\}  .
\]
More generally, we have that
\[
\mathrm{N}_{S}^{P}(x)=\mathbb{R}_{+}\partial_{P}d_{S}(x)\text{ and }%
\mathrm{N}_{S}^{C}(x) =\overline{\mathbb{R}_{+}\partial_{C}d_{S}(x)}^{w}%
\]
(with the convention that $0.\emptyset=\{\theta\}$).

Finally, we recall that $\varphi\in\mathcal{F}(\mathbb{R})$ is nonincreasing
if and only if $\xi\leq0$ for every $\xi\in\partial_{P}\varphi(x)$ and
$x\in\mathbb{R},$   (e.g.,   \cite{ClarketalBook}). We shall use the following version of the Gronwall Lemma (e.g.,
\cite[Lemma 1]{Adlyetal04}).

\begin{lemma}
\label{monoton}Given $t_{2}>t_{1}\geq0$, $a\neq0$, and $b\geq0,$ we assume
that an  absolutely continuous function $\psi:[t_{1},t_{2}]\rightarrow
\mathbb{R}_{+}$ satisfies
\[
\psi^{\prime}(t)\leq a\psi(t)+b\text{ \ \ a.e. }t\in\lbrack t_{1},t_{2}].
\]
Then, for all $t\in\lbrack t_{1},t_{2}],$
\[
\psi(t)\leq(\psi(t_{1})+\frac{b}{a})e^{a(t-t_{1})}-\frac{b}{a}.
\]

\end{lemma}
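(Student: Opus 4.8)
The plan is to reduce the differential inequality to a monotonicity statement by means of an integrating factor, and then to integrate it using the fundamental theorem of calculus for absolutely continuous functions. Note at the outset that the hypothesis $a\neq 0$ enters only to form the quotient $b/a$ at the very end, so the argument will be insensitive to the sign of $a$.

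First, I would introduce the auxiliary function $\phi(t):=e^{-at}\psi(t)$ on $[t_1,t_2]$. Since $\psi$ is absolutely continuous and $t\mapsto e^{-at}$ is continuously differentiable (hence Lipschitz on the compact interval $[t_1,t_2]$), the product $\phi$ is again absolutely continuous, and its almost-everywhere derivative is given by the product rule, $\phi'(t)=e^{-at}\bigl(\psi'(t)-a\psi(t)\bigr)$. Feeding in the hypothesis $\psi'(t)\leq a\psi(t)+b$ then yields the pointwise a.e. bound $\phi'(t)\leq be^{-at}$.

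Next, since $\phi$ is absolutely continuous, I would integrate this inequality from $t_1$ to an arbitrary $t\in[t_1,t_2]$ through $\phi(t)-\phi(t_1)=\int_{t_1}^{t}\phi'(s)\,ds$, which gives
\[
e^{-at}\psi(t)-e^{-at_1}\psi(t_1)\leq \int_{t_1}^{t}be^{-as}\,ds=\frac{b}{a}\bigl(e^{-at_1}-e^{-at}\bigr).
\]
Multiplying through by the positive factor $e^{at}$ and rearranging the terms then produces exactly $\psi(t)\leq\bigl(\psi(t_1)+\tfrac{b}{a}\bigr)e^{a(t-t_1)}-\tfrac{b}{a}$, which is the asserted estimate.

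I do not expect a genuine obstacle in this argument; the only points deserving care are the claims that $\phi=e^{-at}\psi$ remains absolutely continuous and that the product rule holds a.e., both of which follow from the stability of absolute continuity under multiplication by a $C^1$ function, together with the validity of the fundamental theorem of calculus for $\phi$. It is worth remarking that neither the nonnegativity of $\psi$ nor that of $b$ is actually used in deriving the stated upper bound, although both are natural in the intended applications.
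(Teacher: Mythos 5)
Your proof is correct and complete: the passage to $\phi(t)=e^{-at}\psi(t)$, the a.e.\ bound $\phi'(t)\leq be^{-at}$, and the integration via absolute continuity give exactly the stated estimate, and your side remarks (that $a\neq 0$ only enters through $b/a$, and that nonnegativity of $\psi$ and $b$ is not needed) are accurate. The paper itself offers no proof of Lemma \ref{monoton} — it simply cites \cite[Lemma 1]{Adlyetal04} — and your integrating-factor argument is the standard one for this Gronwall-type inequality, valid for either sign of $a$ as you observe.
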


\section{Local characterization of Lyapunov pairs on the interior of the
domain of $A$}\label{santiago}

In this section we provide the desired explicit criterion for lower
semicontinuous (weighted-)\ Lyapunov pairs associated to the differential
inclusion (\ref{id}):%
\[
\dot{x}(t; x_{0})\in f(x(\cdot; x_{0}))-Ax(\cdot;x_{0}),\text{ }x_{0}%
\in\operatorname*{cl}\left(  \dom A\right)  ,
\]
where $A:H\rightrightarrows H$ is a maximally monotone operator and
$f:\operatorname*{cl}\left(  \dom A\right)  \subset H\rightarrow
H$ is a Lipschitz continuous mapping. Recall that for fixed $T>0$ and
$x_{0}\in\operatorname*{cl}\left(  \dom A\right)  ,$ a  
\emph{ strong solution } of (\ref{id}), $x(\cdot; x_{0}):[0,T]\rightarrow H,$ is a
uniquely defined absolute continuous function which satisfies $x(0; x_{0}%
)=x_{0}$\ together with (see, e.g., \cite{BrezisBook})%

\begin{align}
&  \dot{x}(t; x_{0})\in L_{\operatorname*{loc}}^{\infty}%
((0,T],H),\label{liploc}\\
& \nonumber\\
&  x(t; x_{0})\in\dom A,\,\,\,\text{for all }%
t>0,\label{apartenance}\\
& \nonumber\\
&  \dot{x}(t; x_{0})\in f(x(t; x_{0}))-Ax(t; x_{0}),\quad\text{a.e. }t\geq0.
\label{aevery}%
\end{align}
Existence of strong solutions is known to occur if for instance:
\begin{itemize}
\item $x_{0}%
\in\dom A$, $\inte\left(  \operatorname*{co}%
\left(  \dom A\right)  \right)  \neq\emptyset;$ 
\item $\dim H<\infty;$
\item or if $A\equiv\partial\varphi$ where  $\varphi:H\rightarrow\mathbb{R\cup
\{+\infty\}}$ is  a lsc extended-real-valued convex proper function.
\end{itemize}
Moreover, we have that $\dot{x}(\cdot;x_{0})\in L^{\infty}([0,T],H)$ if and
only if $x_{0}\in\dom A$. In this later case, $x(\cdot;x_{0})$
is derivable from right at each $s\in\left[  0,T\right)  $ and
\[
\frac{d^{+}x(\cdot;x_{0})}{t}(s)=f(x(s;x_{0}))-\pi_{Ax(s;x_{0})}%
(f(x(s;x_{0}))).
\]
The strong solution also satisfies the so-called semi-group property,
\begin{equation}
x(s;x(t; x_{0}))=x(s+t; x_{0})\text{ for all }s,t\geq0, \label{sgp}%
\end{equation}
together with the relationship
\[
\left\Vert x(t; x_{0})-x(t; y_{0})\right\Vert \leq e^{L_{f}t}\left\Vert
x_{0}-y_{0}\right\Vert
\]
whenever $t\geq0$ and $x_{0},y_{0}\in\operatorname*{cl}(\dom %
A);$ hereafter, $L_{f}$ denotes the Lipschitz constant of the mapping $f$ on
$\operatorname*{cl}(\dom A).$

In the general case, it is well established that (\ref{id}) admits a unique
\emph{weak} \emph{solution} $x(\cdot; x_{0})\in C(0,T;H)$ which satisfies
$x(t; x_{0})\in\operatorname*{cl}(\dom A)$ for all $t\geq0$.
More precisely, there exists a sequence $(x_n)_{n\in \N} \subset \dom A$ converging to $x_{0}$ such that the strong solution
$x_{k}(\cdot; z_{k})$ of the equation
\begin{equation}
\dot{x}_{k}(t; z_{k})\in f(x(t; z_{k}))-Ax_{k}(t; z_{k}),\text{ \ \ }%
x_{k}(0,z_{k})=z_{k}, \label{equationk}%
\end{equation}
converges   uniformly to $x(\cdot; x_{0})$ on $[0,T].$ Moreover, we have that
\begin{equation}\label{semigroup}
x(s; x(t; x_{0}))=x(s+t; x_{0})\text{ for all }s,t\geq0
\end{equation}
(called the semigroup property). If $L_{f}$ denotes the Lipschitz constant of
$f$ on $\operatorname*{cl}(\dom A),$ then for every $t\geq0$ and
$x_{0},y_{0}\in\operatorname*{cl}(\dom A)$ we have that
\[
\left\Vert x(t; x_{0})-x(t; y_{0})\right\Vert \leq e^{L_{f}t}\left\Vert
x_{0}-y_{0}\right\Vert .
\]

In the remaining part of the paper, $x(\cdot;x_{0})$ denotes the weak solution of Equation
(\ref{id}) (which is also,    a strong one whenever a strong solution exists.)
\vskip 2mm
From now on, we suppose throughout  this section that
\[
\Ovalbox{$
\inte\left(  \operatorname*{co}\left(  \dom %
A\right)  \right)  \neq\emptyset. $\label{conditionint}%
}
\]
Hence,  $\inte\left(  \dom A\right)  $ is convex,
$\inte\left(  \dom A\right)  =\inte%
\left(  \operatorname*{co}\left(  \dom A\right)  \right)
=\inte\left(  \operatorname*{cl}\left(  \dom %
A\right)  \right)  ,$ and $A$ is locally bounded on $\inte%
\left(  \dom A\right)  $. Therefore, a (unique) strong solution of
(\ref{id}) always exists \cite{BrezisBook}. We have the following technical lemma, adding
more information about the qualitative behavior of this solution.

\begin{lemma}
\label{lemarint}Let $\bar{y}\in\dom A\ $and $\rho>0$ be such
that $B_{\rho}(\bar{y})\subset\inte\left(  \operatorname*{co}%
\left(  \dom A\right)  \right)  .$ Then, $M:=\sup_{z\in B_{\rho
}(\bar{y})}\left\Vert (f(z)-Az)^{\circ}\right\Vert <\infty$ and for all $y\in
B_{\rho}(\bar{y})$ and $t\leq1$ we have that
\[
\left\Vert \frac{d^{+}x(\cdot;y)}{dt}(t)\right\Vert \leq e^{L_{f}}M.
\]

\end{lemma}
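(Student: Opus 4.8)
The plan is to treat the two assertions separately: first the finiteness of $M$, then the uniform bound on the right derivative.

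For the finiteness of $M$, I would start from the observation that the principal section of $f(z)-Az$ is precisely its minimal-norm element, namely $(f(z)-Az)^{\circ}=f(z)-\pi_{Az}(f(z))$, so that
\[
\|(f(z)-Az)^{\circ}\|=d(f(z),Az)\le\|f(z)\|+\|(Az)^{\circ}\|.
\]
Since $f$ is Lipschitz continuous on $\operatorname*{cl}(\dom A)$, the term $\|f(z)\|$ stays bounded as $z$ ranges over the bounded set $B_{\rho}(\bar{y})$. Because $B_{\rho}(\bar{y})\subset\inte(\operatorname*{co}(\dom A))=\inte(\dom A)$ and $A$ is locally bounded on $\inte(\dom A)$, the principal sections $\|(Az)^{\circ}\|$ remain uniformly bounded on $B_{\rho}(\bar{y})$. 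Adding the two bounds gives $M<\infty$.

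For the velocity estimate, the starting point is the formula for the right derivative recalled in the preamble: since every $y\in B_{\rho}(\bar{y})$ lies in $\dom A$, the strong solution is right-differentiable at each $s\in[0,T)$ with
\[
\frac{d^{+}x(\cdot;y)}{dt}(s)=f(x(s;y))-\pi_{Ax(s;y)}(f(x(s;y)))=(f(x(s;y))-Ax(s;y))^{\circ},
\]
so that at $s=0$ its norm equals $\|(f(y)-Ay)^{\circ}\|\le M$. The key is therefore to show that $t\mapsto\|\frac{d^{+}x(\cdot;y)}{dt}(t)\|$ grows no faster than $e^{L_{f}t}$. To this end I would fix $h>0$, set $u_{h}(t):=x(t+h;y)-x(t;y)$, and write, using the a.e.\ selection $\eta(s):=f(x(s;y))-\dot{x}(s;y)\in Ax(s;y)$,
\[
\tfrac{1}{2}\tfrac{d}{dt}\|u_{h}(t)\|^{2}=\langle f(x(t+h;y))-f(x(t;y)),u_{h}(t)\rangle-\langle\eta(t+h)-\eta(t),x(t+h;y)-x(t;y)\rangle.
\]
Monotonicity of $A$ makes the second inner product nonnegative, while Lipschitz continuity of $f$ bounds the first by $L_{f}\|u_{h}(t)\|^{2}$. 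Hence $\frac{d}{dt}\|u_{h}(t)\|^{2}\le 2L_{f}\|u_{h}(t)\|^{2}$ a.e., and the Grönwall Lemma \ref{monoton} (with $a=2L_{f}$, $b=0$) yields $\|u_{h}(t)\|\le e^{L_{f}t}\|u_{h}(0)\|=e^{L_{f}t}\|x(h;y)-y\|$.

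Dividing by $h$ and letting $h\to0^{+}$, the right-differentiability of $x(\cdot;y)$ converts this into
\[
\left\|\frac{d^{+}x(\cdot;y)}{dt}(t)\right\|\le e^{L_{f}t}\left\|\frac{d^{+}x(\cdot;y)}{dt}(0)\right\|=e^{L_{f}t}\|(f(y)-Ay)^{\circ}\|\le e^{L_{f}t}M,
\]
and for $t\le1$ the factor $e^{L_{f}t}$ is dominated by $e^{L_{f}}$, which is exactly the claimed bound. I expect the main obstacle to be the rigorous justification of the differential inequality for $\|u_{h}\|^{2}$: as the solution is only absolutely continuous, one must check that $\eta$ is a measurable, locally square-integrable selection and that $t\mapsto\|u_{h}(t)\|^{2}$ is absolutely continuous with the displayed a.e.\ derivative before the monotonicity cancellation can be invoked. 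The limit $h\to0^{+}$ then rests on the existence of the right derivative at every time (equal to the principal section) together with the contraction estimate $\|x(t;x_{0})-x(t;y_{0})\|\le e^{L_{f}t}\|x_{0}-y_{0}\|$ recalled above, which controls the difference quotients uniformly.
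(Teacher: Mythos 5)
Your proposal is correct and, at bottom, takes the same route as the paper: the inequality $\left\Vert x(t+h;y)-x(t;y)\right\Vert \leq e^{L_{f}t}\left\Vert x(h;y)-y\right\Vert$ that you derive via the monotonicity--Gronwall computation is exactly the paper's estimate (\ref{brr}), which the paper obtains instead by combining the semigroup property with the already-recalled Lipschitz dependence on initial data (citing Brezis), and the subsequent step --- dividing by $h$ and using right-differentiability at $0$ to identify the limit with $\left\Vert (f(y)-Ay)^{\circ}\right\Vert \leq M$ --- is identical in both arguments. Your treatment of $M<\infty$, splitting $\left\Vert (f(z)-Az)^{\circ}\right\Vert \leq \left\Vert f(z)\right\Vert +\left\Vert (Az)^{\circ}\right\Vert$ and invoking the Lipschitz continuity of $f$ together with the local boundedness of the maximally monotone operator $A$ on $\inte(\dom A)$, is simply a spelled-out version of the paper's one-line justification and matches it in substance.
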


\begin{proof}
By virtue of the semi-group property (\ref{sgp}), the following inequality
holds for all $y\in\operatorname*{cl}(\dom A)$ and $0\leq t<s$
(e.g., \cite[Lemma 1.1]{BrezisBook})%
\begin{equation}
\left\Vert x(t+s; y)-x(t; y)\right\Vert =\left\Vert
x(t; x(s;y))-x(t; y)\right\Vert \leq e^{L_{f}t}\left\Vert x(s; y)-y\right\Vert .
\label{brr}%
\end{equation}
In particular, for $y\in B_{\rho}(\bar{y})$ and $t\leq1$ we get that
\begin{align*}
\left\Vert \frac{d^{+}x(\cdot; y)}{dt}(t)\right\Vert =\lim_{s\downarrow0}%
s^{-1}\left\Vert x(t+s; y)-x(t; y)\right\Vert  &  \leq e^{L_{f}t}\lim
_{s\downarrow0}s^{-1}\left\Vert x(s;y)-y\right\Vert \\
&  =e^{L_{f}t}\left\Vert \frac{d^{+}x(\cdot;y)}{dt}(0)\right\Vert \\
&  =e^{L_{f}t}\left\Vert (f(y)-Ay)%
{{}^\circ}%
\right\Vert \leq e^{L_{f}}M.
\end{align*}

The fact that $M$ is finite  follows from the maximal monotonicity of $A$
together with the Lipschitz continuity of $f.$\smartqed 
\end{proof}

\begin{definition}
\label{deflyap}Let be given functions $V\in\mathcal{F}(H),$ $W\in
\mathcal{F}(H;\overline{{\mathbb{R}}}_{+})$ and a number $a\in{\mathbb{R}}%
_{+}.$ We say that $(V,W)$ forms a  $\emph{a}$\emph{-}Lyapunov pair for
\emph{(\ref{id}) }with respect to a set $D\subset\operatorname*{cl}%
(\dom A)$ if for all $y\in D$ we have that
\begin{equation}\label{Lyapunovpair}
e^{at}V(x(t; y))+\int_{0}^{t}W(x(\tau;y))d\tau\leq V(y)\text{ for all }t\geq0.
\end{equation}
\end{definition}
\noindent $\emph{a}$\emph{-}Lyapunov pairs with respect to $\operatorname*{cl}%
(\dom A)$ are simply called $\emph{a}$\emph{-}Lyapunov pairs
(see \cite{AHT2012}); in addition, if $a=0$ and $W=0$,  we recover the
classical concept of Lyapunov functions. The case $D=$ $\inte%
(\dom A)$ (when nonmepty), or $D=$ $\operatorname*{ri}%
(\dom A)$ in the finite-dimensional setting, is useful too since
it allows recovering the behaviour of $V$ on the whole set $\operatorname*{cl}%
(\dom A)$ when, as in Proposition \ref{cont} below, some
continuity conditions  on $V$ are known. More precisely, our characterization
theorem, Theorem \ref{heartlocalfinite} below, provides criteria for Lyapunov
pairs with respect to small sets, for instance balls, rather than the whole
set $\inte(\dom A).$
%
The lack of regularity properties of    $\emph{a}$\emph{-}Lyapunov pairs
$(V,W)$ in Definition \ref{deflyap} is mainly due to the non-smoothness of the
function $V.$  Let us remind that
 inequality (\ref{Lyapunovpair})  also
holds if instead of $W$ one considers its Moreau-Yosida regularization, which
is Lipschitz continuous on every bounded subset of $H$. This follows from  the next Lemma \ref{moreau} (e.g  \cite{AHT2012}).
\begin{lemma}
\label{moreau}For every $W\in\mathcal{F}(H;\overline{{\mathbb{R}}}_{+}),$
there exists a sequence of functions $(W_{k})_{k\in\mathbb{N}}\subset
\mathcal{F}(H,{\mathbb{R}}_{+})$ converging to $W$ (for instance,
$W_{k}\uparrow W$) such that each $W_{k}$ is Lipschitz continuous on every
bounded subset of $H,$ and satisfies $V(y)>0$ if and only if $V_{k}%
(y)>0$.
\end{lemma}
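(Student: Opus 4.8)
The plan is to take $W_k$ to be the \emph{Moreau--Yosida regularization} (Moreau envelope) of $W$ with parameter $k$, namely
\[
W_k(x):=\inf_{z\in H}\Big\{W(z)+\tfrac{k}{2}\left\Vert x-z\right\Vert^2\Big\},\qquad x\in H,
\]
and to verify that this single, explicit choice enjoys all four required properties. Since $W\geq 0$ and $W$ is proper, fixing any $a\in\dom W$ gives $0\le W_k(x)\le W(a)+\tfrac{k}{2}\left\Vert x-a\right\Vert^2<\infty$, so each $W_k$ is finite-valued and nonnegative; in particular $W_k\in\mathcal{F}(H;\mathbb{R}_+)$ once lower semicontinuity is established below. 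Monotonicity $W_k\uparrow$ is immediate because the penalty $\tfrac{k}{2}\left\Vert x-z\right\Vert^2$ increases with $k$, and $W_k(x)\le W(x)$ follows by taking $z=x$.

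First I would prove the convergence $W_k\uparrow W$. For fixed $x$ set $\ell:=\lim_k W_k(x)\le W(x)$ and pick near-minimizers $z_k$ with $W(z_k)+\tfrac{k}{2}\left\Vert x-z_k\right\Vert^2\le W_k(x)+\tfrac1k$. When $\ell<\infty$ the penalty term is bounded, forcing $\left\Vert x-z_k\right\Vert^2\le \tfrac{2}{k}(\ell+1)\to0$, hence $z_k\to x$ strongly while $W(z_k)\le \ell+\tfrac1k$; the lower semicontinuity of $W$ then yields $W(x)\le\liminf_k W(z_k)\le\ell$, so $\ell=W(x)$ (the case $\ell=+\infty$ forces $W(x)=+\infty$ at once). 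The same minimizing-sequence device proves the Lipschitz property: for $x_1,x_2$ in a bounded set, the estimate $\tfrac{k}{2}\left\Vert x_1-z\right\Vert^2\le W_k(x_1)+\varepsilon\le W(a)+\tfrac{k}{2}\left\Vert x_1-a\right\Vert^2+\varepsilon$ keeps any $\varepsilon$-minimizer $z$ of $W_k(x_1)$ in a fixed ball; then
\[
W_k(x_2)-W_k(x_1)\le \varepsilon+\tfrac{k}{2}\big(\left\Vert x_2-z\right\Vert^2-\left\Vert x_1-z\right\Vert^2\big)=\varepsilon+\tfrac{k}{2}\langle x_2-x_1,\,x_1+x_2-2z\rangle,
\]
and since $\left\Vert x_1+x_2-2z\right\Vert$ is bounded on the fixed ball, letting $\varepsilon\downarrow0$ and symmetrizing gives $|W_k(x_2)-W_k(x_1)|\le C\left\Vert x_2-x_1\right\Vert$ on every bounded subset; in particular $W_k$ is continuous, hence lsc and proper.

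The positivity equivalence is the genuinely delicate point, and it follows from the same coercivity. If $W(y)=0$ then $W_k(y)\le W(y)=0$, and nonnegativity gives $W_k(y)=0$. Conversely, if $W_k(y)=0$, a minimizing sequence $z_n$ satisfies $W(z_n)+\tfrac{k}{2}\left\Vert y-z_n\right\Vert^2\to0$ with both terms nonnegative, whence $z_n\to y$ and $W(z_n)\to0$; lower semicontinuity of $W$ then forces $W(y)\le\liminf_n W(z_n)=0$, so $W(y)=0$. As $W,W_k\ge0$, this is exactly the asserted equivalence $W(y)>0\iff W_k(y)>0$ (the symbols $V,V_k$ in the statement being a misprint for $W,W_k$).

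The only real subtlety, and the step I would treat most carefully, is that in an infinite-dimensional $H$ the defining infimum need not be \emph{attained}: lower semicontinuity together with coercivity of the quadratic does not by itself produce a minimizer without weak compactness. The remedy, used throughout above, is to avoid minimizers entirely and argue only with minimizing or near-minimizing sequences, exploiting that the quadratic penalty forces \emph{strong} convergence $z_n\to y$, so that the plain (strong) lower semicontinuity of $W$ suffices at every step.
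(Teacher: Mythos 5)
Your proof is correct and takes exactly the route the paper intends: the paper states the lemma without proof (deferring to \cite{AHT2012}), and the surrounding text explicitly identifies the approximants as the Moreau--Yosida regularizations of $W$, which is precisely your construction, with the monotone convergence $W_k\uparrow W$, the Lipschitz bound on bounded sets, and the zero-set equivalence all verified as you do. Your care in using only near-minimizing sequences (since the infimum need not be attained in infinite dimensions, and the quadratic penalty forces strong convergence so plain lower semicontinuity of $W$ suffices), and your reading of $V,V_k$ as a misprint for $W,W_k$, are both sound.
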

Consequently, if $V,$ $D\subset\operatorname*{cl}%
(\dom A),$ and $a\in{\mathbb{R}}_{+}$ are as in Definition \ref{deflyap}  then, with respect to $D$,  $(V,W)$ forms an
$\emph{a}$\emph{-}Lyapunov pair for \emph{(\ref{id}) }if and only if each pair
$(V,W_{k})$ forms an $\emph{a}$\emph{-}Lyapunov pair for \emph{(\ref{id})}.

\begin{proposition}
\label{cont}\bigskip Let be given functions $V\in\mathcal{F}(H),$
$W\in\mathcal{F}(H;\overline{{\mathbb{R}}}_{+})$ and a number $a\in
{\mathbb{R}}_{+}.$ If $V$ verifies
\begin{equation}
\liminf_{\dom A\ni z\rightarrow y}V(z)=V(y)\text{ \ for
all}\ y\in\operatorname*{cl}(\dom A)\cap\dom V,
\label{regular}%
\end{equation}
then it is equivalent to saying  that $(V,W)$ forms an $\emph{a}$\emph{-}Lyapunov
pair with respect to either $\dom A$ or $\operatorname*{cl}%
(\dom A).$
\end{proposition}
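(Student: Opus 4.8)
The plan is to prove the two implications separately, noting at once that essentially all the content lies in a single direction. The implication ``$a$-Lyapunov pair with respect to $\operatorname*{cl}(\dom A)$'' $\Rightarrow$ ``$a$-Lyapunov pair with respect to $\dom A$'' is immediate, since $\dom A\subset\operatorname*{cl}(\dom A)$ and the defining inequality (\ref{Lyapunovpair}) is then merely being asked to hold on a smaller set. For the reverse implication I would argue by approximation, combining the Lipschitz dependence of the weak solution on the initial datum with the regularity hypothesis (\ref{regular}).

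Concretely, I would assume that $(V,W)$ is an $a$-Lyapunov pair with respect to $\dom A$ and fix an arbitrary $y_0\in\operatorname*{cl}(\dom A)$. If $V(y_0)=+\infty$ there is nothing to prove, since the right-hand side of (\ref{Lyapunovpair}) is then $+\infty$; so I may assume $y_0\in\dom V$. The decisive step is to use (\ref{regular}) to produce a \emph{recovery sequence}: because $\liminf_{\dom A\ni z\to y_0}V(z)=V(y_0)$, there exists a sequence $(z_k)_k\subset\dom A$ with $z_k\to y_0$ and $V(z_k)\to V(y_0)$. Since each $z_k\in\dom A$, the Lyapunov inequality is available along each trajectory $x(\cdot;z_k)$:
\[
e^{at}V(x(t;z_k))+\int_0^t W(x(\tau;z_k))\,d\tau\le V(z_k)\quad\text{for all }t\ge0.
\]

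Next I would pass to the limit $k\to\infty$ at a fixed $t\ge0$. By the estimate $\|x(\tau;z_k)-x(\tau;y_0)\|\le e^{L_f\tau}\|z_k-y_0\|$, the trajectories converge, $x(\tau;z_k)\to x(\tau;y_0)$ for each $\tau\in[0,t]$ (indeed uniformly on $[0,t]$). Since $V$ is lsc, $\liminf_k V(x(t;z_k))\ge V(x(t;y_0))$; since $W$ is lsc and nonnegative, Fatou's lemma together with the pointwise lower-semicontinuity bound yields $\liminf_k\int_0^t W(x(\tau;z_k))\,d\tau\ge\int_0^t W(x(\tau;y_0))\,d\tau$. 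Combining these, using superadditivity of $\liminf$ on the left and $\lim_k V(z_k)=V(y_0)$ on the right, I would obtain
\[
e^{at}V(x(t;y_0))+\int_0^t W(x(\tau;y_0))\,d\tau\le\liminf_k\Big(e^{at}V(x(t;z_k))+\int_0^t W(x(\tau;z_k))\,d\tau\Big)\le V(y_0),
\]
which is exactly (\ref{Lyapunovpair}) at $y_0$. As $y_0\in\operatorname*{cl}(\dom A)$ was arbitrary, this gives the $a$-Lyapunov pair property with respect to $\operatorname*{cl}(\dom A)$.

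The hard part will be the right-hand side. Lower semicontinuity of $V$ is one-sided and only delivers $\liminf_k V(z_k)\ge V(y_0)$, which is the \emph{wrong} direction for transferring the bound $V(z_k)$ to the limit; a generic approximating sequence in $\dom A$ could therefore fail, producing a limiting bound strictly larger than $V(y_0)$. This is precisely why hypothesis (\ref{regular}) is indispensable: it guarantees an approximating sequence inside $\dom A$ along which $V(z_k)$ genuinely converges to $V(y_0)$. The left-hand side, by contrast, poses no difficulty, since both lower-semicontinuity passages run in the favorable direction; the only routine point to check is the measurability of $\tau\mapsto W(x(\tau;z_k))$ needed to apply Fatou's lemma, which follows from the continuity of the trajectory and the Borel measurability of the lsc function $W$.
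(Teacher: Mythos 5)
Your proof is correct, and since the paper states Proposition \ref{cont} without any proof (the surrounding text only comments on condition (\ref{regular})), your argument supplies exactly the standard reasoning the authors leave implicit: the recovery sequence $(z_k)\subset\dom A$ furnished by (\ref{regular}), the Lipschitz estimate $\left\Vert x(\tau;z_k)-x(\tau;y_0)\right\Vert\le e^{L_f\tau}\left\Vert z_k-y_0\right\Vert$ for weak solutions, lower semicontinuity of $V$ along the strongly convergent trajectories, and Fatou's lemma for the nonnegative lsc integrand $W$. You also correctly pinpoint why (\ref{regular}) is indispensable, namely that plain lower semicontinuity only gives $\liminf_k V(z_k)\ge V(y_0)$, which runs in the wrong direction on the right-hand side of the Lyapunov inequality.
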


Property  (\ref{regular}) has been already used in
\cite{Kocanetall02}, and implicitely in \cite{Pazy81}, among other works. It
holds, if for instance,  $V$  ($\in F(H)$ ) is convex and its
effective domain has a nonempty interior such that $\inte%
(\dom V)\subset\dom A.$

Our starting point is the next result which characterizes %
$a$ -Lyapunov pairs locally in $\inte%
(\dom A).$  The general form corresponding to %
$a$ -Lyapunov pairs in $\operatorname*{cl}(\dom %
A)$  was recently  established in \cite{AHT2012}. For the reader
convenience we include here a sketch of the proof.
\begin{proposition}
 \label{heartlocal}
 Assume that $\inte\left(
\operatorname*{co}\{\dom A\}\right)  \neq\emptyset.$ Let
$V\in F_{w}(H)$  satisfy $\dom V\subset
\operatorname*{cl}(\dom A)$, $W\in F(H;\overline
{{\mathbb{R}}}_{+}),$  and $a\in R_{+}.$  Let  $\bar{y}\in
H,$ $\bar{\lambda}\in\lbrack-\infty,V(\bar{y})),$ and %
$\bar{\rho}\in(0,+\infty]$  be such that %
\[
\dom V\cap B_{\bar{\rho}}(\bar{y})\cap\lbrack V>\bar{\lambda
}]\subset\inte(\dom A).
\]
 Then, the following statements are equivalent:\newline\newline(i)
$\forall y\in\dom V\cap B_{\bar{\rho}}(\bar{y})\cap\lbrack
V>\bar{\lambda}]$ %
\[
\sup_{\xi\in\partial_{P}V(y)}\min_{\upsilon \in Ay}\left\langle \xi
,f(y)-\upsilon \right\rangle +aV(y)+W(y)\leq0;
\]
(ii) $\forall y\in\dom V\cap B_{\bar{\rho}}(\bar
{y})\cap\lbrack V>\bar{\lambda}]$ %
\[
\sup_{\xi\in\partial_{P}V(y)}\left\langle \xi,f(y)-\pi_{Ay}(f(y))\right\rangle
+aV(y)+W(y)\leq0;
\]
(iii) $\forall y\in B_{\bar{\rho}}(\bar{y})\cap\lbrack V>\bar
{\lambda}]$ we have that %
\[
e^{at}V(x(t; y))+\int_{0}^{t}W(x(\tau;y))d\tau\leq V(y)\text{ \ \ }\forall
t\in\left[  0,\rho(y)\right]  ,
\]
where %
\begin{equation}
\rho(y):=\sup\left\{  \nu>0\left\vert
\begin{array}
[c]{l}%
\exists\rho>0~s.t.~B_{\rho}(y)\subset B_{\bar{\rho}}(\bar{y})\cap\lbrack
V>\bar{\lambda}],\text{ and for all }t\in\lbrack0,\nu]\\
~2\left\Vert x(t;y)-y\right\Vert <\frac{\rho}{2}\text{ and }\\
\left\vert (e^{-at}-1)V(y)-\int_{0}^{t}W(x(\tau;y))d\tau\right\vert
<\frac{\rho}{2}\text{ }%
\end{array}
\right.  \right\}  .\label{rhox0}%
\end{equation}

\end{proposition}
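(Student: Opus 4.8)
The plan is to establish the cycle of implications (iii)$\Rightarrow$(ii)$\Rightarrow$(i)$\Rightarrow$(iii), of which the middle one is immediate. Before starting I would record two reductions. First, by Lemma~\ref{moreau} I may assume that $W$ is locally Lipschitz continuous, so that $t\mapsto\int_0^tW(x(\tau;y))\,d\tau$ is $C^1$ with derivative $W(x(t;y))$; by that lemma the validity of (\ref{Lyapunovpair}) is unaffected. Second, the standing assumption $\inte(\operatorname*{co}\{\dom A\})\neq\emptyset$ together with $\dom V\cap B_{\bar\rho}(\bar y)\cap[V>\bar\lambda]\subset\inte(\dom A)$ guarantees, through Lemma~\ref{lemarint}, that a trajectory issued from such a point remains in $\inte(\dom A)$ for a short time with uniformly bounded right derivative; this is exactly what makes $\rho(y)>0$ in (\ref{rhox0}) and keeps every quantity below well defined, since $A$ is then locally bounded and the velocity is governed by the projection $\pi_{Ay}(f(y))$ of $f(y)$ onto $Ay$ via $\tfrac{d^+x}{dt}(0)=f(y)-\pi_{Ay}(f(y))$.

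For (iii)$\Rightarrow$(ii) I would differentiate the local Lyapunov inequality at $t=0^+$. Fix $y$ in the relevant set and $\xi\in\partial_PV(y)$. The proximal inequality gives $V(x(t;y))-V(y)\geq\langle\xi,x(t;y)-y\rangle-\sigma\|x(t;y)-y\|^2$ for small $t$; dividing by $t$, letting $t\downarrow0$ and using $\tfrac{d^+x}{dt}(0)=f(y)-\pi_{Ay}(f(y))$ yields $\liminf_{t\downarrow0}t^{-1}(V(x(t;y))-V(y))\geq\langle\xi,f(y)-\pi_{Ay}(f(y))\rangle$. Combining this with the elementary expansion of $e^{at}V(x(t;y))$ and with $\tfrac1t\int_0^tW(x(\tau;y))\,d\tau\to W(y)$, the nonincrease furnished by (iii) forces $\langle\xi,f(y)-\pi_{Ay}(f(y))\rangle+aV(y)+W(y)\leq0$, and taking the supremum over $\xi$ gives (ii). The implication (ii)$\Rightarrow$(i) needs no work: since $\pi_{Ay}(f(y))\in Ay$ one has $\min_{\upsilon\in Ay}\langle\xi,f(y)-\upsilon\rangle\leq\langle\xi,f(y)-\pi_{Ay}(f(y))\rangle$ for every $\xi$, so the left-hand side of (i) is dominated by that of (ii).

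The substance is (i)$\Rightarrow$(iii). Here I would fix $y\in B_{\bar\rho}(\bar y)\cap[V>\bar\lambda]$, choose $\rho$ as in (\ref{rhox0}) so that the trajectory stays inside $B_{\bar\rho}(\bar y)\cap[V>\bar\lambda]\subset\inte(\dom A)$ on $[0,\rho(y)]$, and prove that the real function $\psi(t):=e^{at}V(x(t;y))+\int_0^tW(x(\tau;y))\,d\tau$ is nonincreasing on $[0,\rho(y)]$. By the characterization of nonincreasing functions recalled above, $\psi$ is nonincreasing iff all its proximal subgradients are $\leq0$, so it suffices to bound such subgradients. Stripping off the $C^1$ integral term and the smooth factor $e^{at}$, this reduces to controlling the proximal behaviour of $t\mapsto V(x(t;y))$ along the absolutely continuous strong solution; at a time $t_0$ the velocity is $\dot x(t_0)=f(x(t_0;y))-\pi_{Ax(t_0;y)}(f(x(t_0;y)))$ with $\pi_{Ax(t_0;y)}(f(x(t_0;y)))\in Ax(t_0;y)$, and condition (i) applied at $x(t_0;y)$ must be converted into the pointwise estimate $\tfrac{d}{dt}\psi(t_0)\leq0$. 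Once this infinitesimal inequality is secured, the weight $e^{at}$ and the term $aV$ are absorbed by the Gronwall estimate of Lemma~\ref{monoton}, yielding (\ref{Lyapunovpair}) on $[0,\rho(y)]$, which is (iii).

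The main obstacle is precisely this last passage, from the \emph{min-over-$Ay$} form of (i) to the genuine decrease of $\psi$ along the trajectory. The difficulty is twofold: the composite $V\circ x(\cdot;y)$ is merely lower semicontinuous and its proximal subdifferential may be empty at many times, so no naive chain rule is available; and, as the computation above shows, (i) is strictly weaker than (ii) pointwise, so the bound cannot come from simply pairing a subgradient with the actual velocity. The resolution has to exploit the maximal monotonicity of $A$ — comparing the selection $\pi_{Ax}(f(x))$ with arbitrary $\upsilon\in Ax$ and using monotonicity between selections at nearby trajectory points — together with the local boundedness of $A$ on $\inte(\dom A)$ from Lemma~\ref{lemarint}, which is what ultimately renders the weaker condition (i) sufficient to close the cycle.
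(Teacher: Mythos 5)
Your reductions and two of the three implications are sound: (ii)$\Rightarrow$(i) is indeed immediate from $\min_{\upsilon\in Ay}\langle\xi,f(y)-\upsilon\rangle\leq\langle\xi,f(y)-\pi_{Ay}(f(y))\rangle$, and your proof of (iii)$\Rightarrow$(ii) — pair the proximal inequality with the trajectory, divide by $t$, and use $\tfrac{d^{+}x(\cdot;y)}{dt}(0)=f(y)-\pi_{Ay}(f(y))$ (legitimate since $y\in\dom V\cap B_{\bar\rho}(\bar y)\cap[V>\bar\lambda]\subset\inte(\dom A)$) — is exactly the paper's argument. The invocation of Lemma \ref{moreau} to make $W$ locally Lipschitz and of Lemma \ref{lemarint} for the uniform velocity bound also matches the paper's setup.

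But (i)$\Rightarrow$(iii), which is the substance of the proposition, is not proved. You reduce it to showing that $\psi(t)=e^{at}V(x(t;y))+\int_{0}^{t}W(x(\tau;y))\,d\tau$ has nonpositive proximal subgradients, then you yourself name the two obstructions — $V\circ x(\cdot;y)$ is merely lsc with possibly empty $\partial_{P}$ and no chain rule, and (i) is pointwise strictly weaker than pairing a subgradient with the actual velocity — and conclude with the assertion that maximal monotonicity ``ultimately renders (i) sufficient,'' without exhibiting how. That missing step is the entire content. The paper's device, absent from your sketch, is to never differentiate $V$ along the trajectory: it forms the augmented curve $z(t)=(x(t;y),V(y))$ with \emph{frozen} value (corrected by $e^{-at}$ and the $W$-integral in the general case) and the Lipschitz function $\eta(t)=\tfrac12 d^{2}(z(t),\operatorname{epi}V)$, and proves $\partial_{C}\eta(t)\subset\mathbb{R}_{-}$. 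The Clarke chain rule represents $\partial_{C}\eta(t)$ through pairings $\langle x(t;y)-u,-\vartheta\rangle$ with $\vartheta\in Ax(t;y)$, where $(u,\mu)$ ranges over projections of $z(t)$ onto $\operatorname{epi}V$; the localization $2\Vert x(t;y)-y\Vert<\rho/2$ built into (\ref{rhox0}) is precisely what forces $u\in B_{\rho}(y)\subset\inte(\dom A)$, so hypothesis (i) is applied \emph{at the projection point} $u$ — an off-trajectory point — not at $x(t;y)$; your phrase ``monotonicity between selections at nearby trajectory points'' aims at the wrong pair. Concretely: if $\mu>V(y)$ then $(\mu-V(y))^{-1}(x(t;y)-u)\in\partial_{P}V(u)$, (i) supplies $\upsilon\in Au$ with $\langle x(t;y)-u,-\upsilon\rangle\leq0$, and monotonicity of $A$ transfers this to every $\vartheta\in Ax(t;y)$; if $\mu=V(y)$ the normal is horizontal and must be approximated by genuine proximal subgradients at nearby points $u_{\varepsilon}$ (a density theorem the paper cites), with local boundedness of $A$ on $\inte(\dom A)$ used to pass to the limit. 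Then $\eta$ nonincreasing with $\eta(0)=0$ yields $z(t)\in\operatorname{epi}V$, i.e.\ (iii). Finally, the Gronwall step you propose (Lemma \ref{monoton}) plays no role here; the weight and the $W$-term are absorbed into the frozen second coordinate of $z(t)$, not handled by a differential inequality for $\psi$.
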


\begin{remark}
(Before the proof) the constant $\rho(y)$ defined in
(\ref{rhox0}) is positive whenever $y\in\operatorname*{cl}%
(\dom A)\cap B_{\bar{\rho}}(\bar{y})\cap\lbrack V>\bar{\lambda
}].$ Hence, when $\bar{\rho}=-\bar{\lambda}=\infty$ one can
easily show that (iii) is equivalent to (see \cite[Proposition 3.2]{AHT2012}) %
\[
e^{at}V(x(t; y))+\int_{0}^{t}W(x(\tau;y))d\tau\leq V(y)\emph{\ \ }%
\text{\ \ \ \ }\emph{for\ all\ }t\geq0;
\]
that is, $(V,W)$ forms a Lyapunov pair with respect to
$\inte(\dom A).$
\end{remark}

\begin{proof}
For simplicity,  we suppose that  $W\equiv0.$

(iii) $\implies$ (ii) Let us fix $y\in B_{\bar{\rho}}(\bar
{y})\cap\lbrack V>\bar{\lambda}]$ and $\xi\in\partial_{P}%
V(y)$  so that  $y\in B_{\bar{\rho}}(\bar{y})\cap\lbrack V>\bar
{\lambda}]\cap\dom V\subset\dom A$ and
there exist  $\alpha>0$  and  $T\in(0,\rho(y))$  such that %
\[
\langle\xi,x(t;y)-y\rangle\leq V(x(t;y))-V(y)+\alpha\left\Vert
x(t;y)-y\right\Vert ^{2}\leq\alpha\left\Vert x(t;y)-y\right\Vert ^{2}\text{
for all }t\in\lbrack0,T).
\]
 But  $y\in\dom A$  and so there exists a constant
$l\geq0$  such that %
\[
\langle\xi,t^{-1}(x(t;y)-y)\rangle\leq l\left\Vert x(t;y)-y\right\Vert \text{
for all }t\in\lbrack0,T);
\]
 hence, taking the limit as $t\rightarrow0^{+}$ we obtain that
\[
\langle\xi,f(y)-\pi_{Ay}(f(y))\rangle\leq0;
\]
that is, (ii) follows.

 (i) $\implies$ (iii) To simplify the proof of this part, we
assume that   $f\equiv0,$   $W$  $\equiv0$  and %
$a=0.$  For this aim we fix  $y\in\dom V\cap B_{\bar
{\rho}}(\bar{y})\cap\lbrack V>\bar{\lambda}]$  and let $\rho
>0$  and $v>0$  be such that %
\begin{equation}
B_{\rho}(y)\subset B_{\bar{\rho}}(\bar{y})\cap\lbrack V>\bar{\lambda}]\text{
\ and }\label{f0}%
\end{equation}%
\begin{equation}
\sup_{t\in\lbrack0,\nu]}2\left\Vert x(t;y)-y\right\Vert <\rho;\label{rhobar}%
\end{equation}
 the existence of such scalars  $\rho$  and $v$  is a
consequence of the lower semicontinuity  of  $V$  and the Lipschitz continuity of $x(\cdot;\cdot)$  (see Lemma \ref{lemarint}). Let   $T<\nu$  be
fixed and define the functions  $z(\cdot):[0,T]\subset R_{+}\rightarrow
H\times R$  and  $\eta(\cdot):[0,T]\subset R_{+}\rightarrow R_{+}%
$  as %
\begin{equation}
z(t):=\left(  x(t;y),V(y)\right)  ,\text{ }\eta(t):=\frac{1}{2}d^{2}%
(z(t),\operatorname*{epi}V);\label{hf1}%
\end{equation}
 observe that  $z(\cdot)$  and $\eta(\cdot)$  are
Lipschitz continuous on $[0,T).$  Now, using a standard chain rule
(e.g. \cite{ClarkeBook}), for fixed  $t\in(0,T)$  it holds that %
\[
\partial_{C}\eta(t)=d(z(t),\operatorname*{epi}V)\partial_{C}d(z(\cdot
),\operatorname*{epi}V)(t).
\]
 So, from one hand we get  $\partial_{C}\eta(t)=\{\theta\}$ 
whenever $z(t)\in\operatorname*{epi}V.$  On the other hand, when
$z(t)\not \in \operatorname*{epi}V$  we obtain that 
\begin{equation}
\partial_{C}\eta(t)\subset\overline{\operatorname*{co}}\left[  \bigcup
\limits_{(u,\mu)\in\Pi_{\operatorname*{epi}V}(z(t)),\text{ }u\in B_{\rho}%
(y)}\left\langle x(t;y)-u,-Ax(t;y)\right\rangle \right]  ;\label{cc}%
\end{equation}
 the fact that  $u\in B_{\rho}(y)$  is a consequence of the
following inequalities:%
\begin{align*}
\left\Vert u-y\right\Vert  &  \leq\left\Vert x(t;y)-u\right\Vert +\left\Vert
x(t;y)-y\right\Vert \\
&  \leq\left\Vert (x(t;y),V(y))-(u,\mu)\right\Vert +\left\Vert
x(t;y)-y\right\Vert \\
&  \leq\left\Vert (x(t;y),V(y))-(y,V(y))\right\Vert +\left\Vert
x(t;y)-y\right\Vert \text{ }\\
&\leq2\left\Vert x(t;y)-y\right\Vert <\rho.
\end{align*}
 Take now $\xi\in Ax(t;y)$  and $(u,\mu)\in
\Pi_{\operatorname*{epi}V}(z(t))$  with $u\in B_{\rho}(y)$  so
that  $V(y)-\mu\leq0$  and $u\in\dom V\cap B_{\bar{\rho
}}(\bar{y})\cap\lbrack V>\bar{\lambda}]$  (recall (\ref{f0})).
\vskip 2mm
 If
$V(y)-\mu<0,$  we write  $(\mu-V(y))^{-1}$  $(x(t;y)-u)\in
\partial V_{P}(u).$  Then, by the current assumption (i),  select 
$\upsilon\in Au$ such that %
\[
\left\langle (\mu-V(y))^{-1}(x(t;y)-u),-\upsilon \right\rangle \leq0.
\]
 Therefore,   invoking the monotonicty of $A$  we get %
\[
\langle x(t;y)-u,-\vartheta \rangle=\langle x(t;y)-u,-\upsilon \rangle+\langle
x(t;y)-u,\upsilon -\vartheta \rangle\leq\langle x(t;y)-u,-\upsilon \rangle\leq0.
\]
 Since  $\xi\in Ax(t;y)$  is arbitrary and according to (\ref{cc}), \ we deduce that  $\partial_{C}\eta(t)\subset \R_{-}.$  \vskip 2mm
 If
$V(y)-\mu=0$  so that $(x(t;y)-u,0)\in N_{\operatorname*{epi}%
V}(u,V(u))$  and $x(t;y)-u\neq\theta.$  Then, for a fixed
$\varepsilon>0$  such that  $B_{\varepsilon}(u)\subset B_{\rho}%
(y)\cap\inte(\dom (A))$  (recall that
$u\in B_{\rho}(y)\cap\dom V\cap B_{\bar{\rho}}(\bar{y}%
)\cap\lbrack V>\bar{\lambda}]\subset\inte(\dom %
(A))$ ), take   $u_{\varepsilon}\in B_{\varepsilon}(u)\cap
\dom V$  with $\left\vert V(u)-V(u_{\varepsilon
})\right\vert \leq\varepsilon,$   $\alpha\in(0,\varepsilon)$ 
and  $\xi\in B_{\varepsilon}(x(t;y)-u)$   such that $\alpha^{-1}%
\xi\in\partial V_{P}(u_{\varepsilon})$  (see, e.g., \cite[Theorem
2.4]{ClarkeBook}). Hence, using the current assumption,  select   $\xi_{\varepsilon} \in Au_{\varepsilon}$  such that  $\left\langle \xi
,-\xi_{\varepsilon} \right\rangle \leq\alpha\varepsilon$.   Hence,
\[
\langle x(t;y)-u,-\xi_{\varepsilon} \rangle\leq\varepsilon\left\Vert
\xi_{\varepsilon} \right\Vert +\left\langle \xi,-u_{\varepsilon}^{\ast
}\right\rangle \leq\varepsilon\left\Vert \xi_{\varepsilon} \right\Vert
+\alpha\varepsilon\leq\varepsilon\left\Vert \xi_{\varepsilon} \right\Vert
+\varepsilon^{2}.
\]
By the monotonicity of  $A$  this yields%
\begin{align*}
\langle x(t;y)-u,-\vartheta \rangle &  \leq\langle x(t;y)-u_{\varepsilon
},-\vartheta \rangle+\varepsilon\left\Vert \vartheta \right\Vert \\
&  \leq\langle x(t;y)-u_{\varepsilon},-\xi_{\varepsilon} \rangle
+\varepsilon\left\Vert \vartheta \right\Vert \\
&  \leq\langle x(t;y)-u,-\xi_{\varepsilon} \rangle+\left\Vert
u_{\varepsilon}-u\right\Vert \left\Vert \xi_{\varepsilon} \right\Vert
+\varepsilon\left\Vert \vartheta \right\Vert \\
&  \leq2\varepsilon\left\Vert \xi_{\varepsilon} \right\Vert +\varepsilon
\left\Vert \vartheta \right\Vert +\varepsilon^{2}.
\end{align*}
 Moreover, as $(u_{\varepsilon})_{\varepsilon\leq1}$  is
bounded in $\inte(\dom (A)),$  the net
$(\xi_{\varepsilon} )_{\varepsilon}$  is also bounded and 
passing to the limit as  $\varepsilon$  goes to $0$  we get %
\[
\langle x(t;y)-u,-\vartheta \rangle\leq0.
\]
 This gives the desired inclusion $\partial_{C}\eta(t)\subset \R_{-}%
$  (recall (\ref{cc})) and so establishes  the proof of (iii).\smartqed 
\end{proof}

 \bigskip

 We are now ready to give the main result of this section, which
provides   a precise improvement of Proposition \ref{heartlocal}.

\begin{theorem}
 \label{heartlocalfinite}Assume that  $\inte\left(
\operatorname*{co}\{\dom A\}\right)  \neq\emptyset.$  Let
$V\in F_{w}(H)$  with  $\inf V>-\infty,$   $W\in F(H;\overline
{{\mathbb{R}}}_{+}),$  and  $a\in R_{+}$  be given. Fix %
$\bar{y}\in\dom V,$   $\bar{\lambda}\in(-\infty,V(\bar
{y}))$  and let  $\bar{\rho}>0$  be such that %
\[
\dom V\cap\lbrack V>\bar{\lambda}]\cap B_{\bar{\rho}}(\bar
{y})\subset\inte(\dom A).
\]
 Then, the following statements are equivalent:\newline\newline(i)
$\forall y\in\dom V\cap B_{\bar{\rho}}(\bar{y})\cap\lbrack
V>\bar{\lambda}]$   %
\[
\sup_{\xi\in\partial_{P}V(y)}\min_{\upsilon \in Ay}\left\langle \xi
,f(y)-\upsilon \right\rangle +aV(y)+W(y)\leq0;
\]
 (ii) (If  $V$  is weakly continuous when restricted to
$B_{\rho}(\bar{y}))$  $\forall y\in\dom V\cap
B_{\bar{\rho}}(\bar{y})\cap\lbrack V>\bar{\lambda}]$  %
\[
e^{at}V(x(t; y))+\int_{0}^{t}W(x(\tau; y))d\tau\leq V(y)\text{ \ for all }%
t\geq0.
\]
 \newline\quad\newline Consequently, if (i)-(ii) holds on
$\inte(\dom A),$  the pair %
$(V,W)$  is an  $a$ -Lyapunov pair for (\ref{id}) with respect
to $\operatorname*{cl}(\dom A).$
\end{theorem}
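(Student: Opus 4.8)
The plan is to derive this statement from the local-in-time characterization already established in Proposition \ref{heartlocal}, to upgrade it to a global-in-time estimate via the semi-group property \eqref{sgp}, and finally to pass from $\inte(\dom A)$ to $\operatorname{cl}(\dom A)$ by approximation. The implication (ii)$\Rightarrow$(i) is essentially free: the inequality in (ii), holding for all $t\ge0$, holds in particular for $t\in[0,\rho(y)]$, so it is statement (iii) of Proposition \ref{heartlocal} on the same set, whence (i) follows from that proposition. (Equivalently one repeats the step (iii)$\Rightarrow$(ii) there: for $\xi\in\partial_PV(y)$ one writes the proximal inequality along $x(\cdot;y)$, divides by $t$ and lets $t\downarrow0$, using $\frac{d^{+}x(\cdot;y)}{dt}(0)=f(y)-\pi_{Ay}(f(y))$ together with $\min_{\upsilon\in Ay}\langle\xi,f(y)-\upsilon\rangle\le\langle\xi,f(y)-\pi_{Ay}(f(y))\rangle$.)

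For the substantial direction (i)$\Rightarrow$(ii), I would first invoke Proposition \ref{heartlocal}, which under (i) gives, for every $z\in B_{\bar\rho}(\bar y)\cap[V>\bar\lambda]$ and every $\sigma\in[0,\rho(z)]$ with $\rho(z)>0$, the local estimate $e^{a\sigma}V(x(\sigma;z))+\int_0^\sigma W(x(\tau;z))\,d\tau\le V(z)$. Writing $\phi(t):=e^{at}V(x(t;y))+\int_0^tW(x(\tau;y))\,d\tau$, the key elementary computation is that, when $x(s;y)\in G:=B_{\bar\rho}(\bar y)\cap[V>\bar\lambda]$, applying this estimate at $z=x(s;y)$ and using the semi-group identity $x(\sigma;x(s;y))=x(s+\sigma;y)$ together with $e^{as}\ge1$ and $W\ge0$ yields $\phi(s+\sigma)\le\phi(s)+(1-e^{as})\int_s^{s+\sigma}W(x(u;y))\,du\le\phi(s)$. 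Hence $\phi$ is nonincreasing along the trajectory for as long as it remains in the open set $G$, so that $\phi(t)\le\phi(0)=V(y)$ there.

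The hard part will be to control the trajectory once it leaves $G$, and this is where I expect the main obstacle to lie. The plan is a continuation/maximality argument: set $t_0:=\inf\{t\ge0:\phi(t)>V(y)\}$, which is well defined, and note that weak continuity of $V$ on $B_{\bar\rho}(\bar y)$ (with continuity of $t\mapsto x(t;y)$ and of the integral) makes $\phi$ continuous, so $\phi(t_0)=V(y)$. If $z_0:=x(t_0;y)$ lies in $G$, then restarting the local estimate at $z_0$ forces $\phi(t_0+\sigma)\le\phi(t_0)=V(y)$ for small $\sigma>0$, contradicting the definition of $t_0$; it therefore suffices to handle the two escape modes $z_0\in\partial B_{\bar\rho}(\bar y)$ and $V(z_0)=\bar\lambda$. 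Here weak continuity identifies the boundary value $V(z_0)=\lim_{t\uparrow t_0}V(x(t;y))\ge\bar\lambda$, while $\inf V>-\infty$ keeps $\phi$ finite and supplies the a priori bound needed to exclude an increase of $\phi$ past $V(y)$ at such exits. The delicate point is that the weight $e^{at}$ forbids a purely pointwise comparison, so I would not argue at a single instant but rather split $\{t:x(t;y)\in G\}$ from its complement, show that $\phi$ decreases on each maximal sub-arc lying in $G$, dominate the endpoint values on the complementary sub-level excursions by $V(y)$, and glue these estimates to conclude $\phi(t)\le V(y)$ for all $t\ge0$.

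Finally, for the concluding assertion I would apply the equivalence with $\bar\rho=+\infty$, $\bar\lambda=-\infty$ and $\bar y\in\inte(\dom A)$ arbitrary, so that $G$ becomes $\dom V\subset\inte(\dom A)$ and (ii) gives the global inequality \eqref{Lyapunovpair} for every $y\in\inte(\dom A)\cap\dom V$; that is, $(V,W)$ is an $a$-Lyapunov pair with respect to $\inte(\dom A)$. To reach $\operatorname{cl}(\dom A)$, fix $y_0\in\operatorname{cl}(\dom A)$, choose $y_n\in\inte(\dom A)$ with $y_n\to y_0$, and use the contraction estimate $\|x(t;y_n)-x(t;y_0)\|\le e^{L_ft}\|y_n-y_0\|$ to pass to the limit in $e^{at}V(x(t;y_n))+\int_0^tW(x(\tau;y_n))\,d\tau\le V(y_n)$: weak lower semicontinuity of $V$ controls the first term from below, Fatou's lemma the integral term, and the regularity property \eqref{regular} (Proposition \ref{cont}) disposes of the right-hand side, yielding $\phi(t)\le V(y_0)$ for all $t\ge0$. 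This is precisely the assertion that $(V,W)$ is an $a$-Lyapunov pair with respect to $\operatorname{cl}(\dom A)$.
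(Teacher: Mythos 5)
Your reduction of (ii)$\Rightarrow$(i) to Proposition \ref{heartlocal} is fine and matches the paper (which only proves (i)$\Rightarrow$(ii) and treats the converse as contained in that proposition). The genuine gap is exactly where you predicted it: the excursions. Hypothesis (i) constrains $\partial_P V$ only at points of $\dom V\cap B_{\bar\rho}(\bar y)\cap[V>\bar\lambda]$, so once the trajectory leaves $G:=B_{\bar\rho}(\bar y)\cap[V>\bar\lambda]$ you have literally no information about $V$ along the excursion, and no pointwise mechanism can ``dominate the endpoint values on the complementary sub-level excursions by $V(y)$'': nothing in (i) prevents $V$ from increasing along the flow at points outside $G$, and a re-entry value can exceed $V(y)$, so the gluing step cannot be executed. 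Two auxiliary ingredients you lean on also fail or are misused: $\phi(t)=e^{at}V(x(t;y))+\int_0^tW(x(\tau;y))\,d\tau$ need not be continuous once $x(t;y)$ leaves the ball (the weak continuity of $V$ is only assumed on $B_{\rho}(\bar y)$; elsewhere $V$ is merely weakly lsc, so $\phi(t_0)=V(y)$ at your first-exit time is unjustified); and $\inf V>-\infty$ does not ``exclude an increase of $\phi$'' --- in the paper its actual role is to make the Moreau--Yosida regularization $V_\delta(y)=\inf_z\{V(z)+\delta^{-1}\|y-z\|^2\}$ finite. This is why the paper's argument is structurally different: it \emph{never follows the trajectory outside the anchor region}. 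It proves the estimate for all initial points in balls $B_{\rho_\lambda}(\hat y)\cap G(\hat y)$ anchored near $\hat y$ and bootstraps the \emph{time horizon}: the set $E_{\varepsilon,\delta}$ of admissible horizons is shown nonempty (the short-time claim, from Proposition \ref{heartlocal}), open (semigroup restart after a \emph{small} time $\hat\nu$, during which the trajectory provably stays in the anchor ball; the engineered slack $\varepsilon(\rho_1-\frac{\rho}{2})(\rho-\rho_2)$ with the radius corridor $(\frac{\rho}{2},\rho)$ absorbs both the factor $e^{a\hat\nu}$ and the erosion $M\hat\nu$ of the anchor radius), and closed (a weak-compactness argument with projections onto $\overline B_{\rho/2}(\hat y)$, where the assumed weak continuity of $V$ and the Lipschitz regularizations $V_\delta$, $W_k$ enter), whence $E_{\varepsilon,\delta}=\mathbb{R}_+$ by connectedness, and one lets $\varepsilon\to0$, $\delta\to0$. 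Far excursions are then covered \emph{abstractly} by the inductive hypothesis at the restart point, which is the idea your single-trajectory continuation lacks.

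Your final step also diverges from the paper and adds an unwarranted hypothesis: to pass from $\inte(\dom A)$ to $\operatorname{cl}(\dom A)$ you approximate $y_0$ by $y_n\in\inte(\dom A)$ and need $\liminf_n V(y_n)\le V(y_0)$ on the right-hand side, which you obtain by invoking property (\ref{regular}) via Proposition \ref{cont} --- but (\ref{regular}) is \emph{not} among the theorem's hypotheses (weak lower semicontinuity gives the wrong inequality there). The paper's ``consequently'' needs no approximation at all: under the standing inclusion, every $y\in\operatorname{cl}(\dom A)$ either lies in $\dom V\subset\inte(\dom A)$, where (ii) applies directly, or satisfies $V(y)=+\infty$, making the Lyapunov inequality trivial; this is precisely the closing observation of the paper's proof.
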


\begin{proof}
The consequence is immediate once we prove the main conclusion.

First, invoking  Lemma \ref{moreau}
we may assume w.l.o.g. that $W$ is
Lipschitz continuous on every bounded subset of $H.$ In the rest of the proof,
we take $\hat{y}$ in $\dom V\cap B_{\bar{\rho}}(\bar{y}%
)\cap\lbrack V>\bar{\lambda}]$ ($\subset\inte%
(\dom A)$) and, taking into account the lsc of $V,$ choose
$\rho>0$ such that $B_{2\rho}(\hat{y})\subset B_{\bar{\rho}}(\bar{y}%
)\cap\lbrack V>\bar{\lambda}]\cap\inte(\dom A)$
and
\begin{equation}
V(z)\geq V(\hat{y})-1\text{ \ \ }\forall z\in B_{2\rho}(\tilde{y}).
\label{mak}%
\end{equation}
Also, by virtue of Lemma \ref{lemarint}, we consider a positive constant $M$
such that, for all $0\leq t\leq1$ and all $z\in B_{2\rho}(\hat{y}),$
\begin{equation}
\left\Vert \frac{d^{+}x(\cdot; z)}{dt}(t)\right\Vert \leq M; \label{pr}%
\end{equation}
hence, $\left\Vert x(t; z)-z\right\Vert \leq Mt$ and so, by (\ref{mak}),
\begin{equation}
V(x(t; z))\geq V(\hat{y})-1\geq\bar{\lambda}-1\text{ \ \ }\forall z\in B_{\rho
}(\hat{y})\text{ and }\forall t\in\left[  0,\frac{\rho}{M}\right]  .
\label{vlsc1}%
\end{equation}
Let us fix $\gamma\geq1$ and define the set
\begin{equation}
G(\hat{y}):=[\left\vert V\right\vert \leq\left\vert V(\hat{y})\right\vert
+\gamma]. \label{gyb}%
\end{equation}

\textbf{Claim}: there exists $T>0$ such that
\begin{equation}
e^{at}V(x(t; y))+\int_{0}^{t}W(x(\tau; y))d\tau\leq V(y)\text{
\ \ \ \ \ }\forall y\in B_{\rho}(\hat{y})\cap G(\hat
{y}),\text{ }\forall t\in\lbrack0,T]. \label{ma}%
\end{equation}
Using the ($L_{W}$-)Lipschitz continuity of $W$ on the (bounded) set
$\{x(t; y)\mid0\leq t\leq1,y\in B_{2\rho}(\hat{y})\},$ we write, for all $y\in
B_{2\rho}(\hat{y})\cap G(\hat{y})\cap\dom V$ and $0\leq t\leq1,$%
\begin{multline*}
2\left\Vert x(t; y)-y\right\Vert +\left\vert \left(  e^{-at}-1\right)
V(y)-\int_{0}^{t}W(x(\tau;y))d\tau\right\vert \\
\leq2Mt+\left(  1-e^{-at}\right)  (\left\vert V(\hat{y})\right\vert
+\gamma)+(W(\hat{y})+L_{W}(M+2\rho))t.
\end{multline*}
Therefore, we can choose $T>0$ so that for all $y\in B_{2\rho}(\hat{y})\cap
G(\hat{y})$ we have that
\[
\sup_{t\in\lbrack0,T]}2\left\Vert x(t; y)-y\right\Vert +\left\vert \left(
e^{-at}-1\right)  V(y)-\int_{0}^{t}W(x(\tau;y))d\tau\right\vert <\frac{\rho
}{2}.
\]
We also observe that for any given $y\in B_{\rho}(\hat{y})\cap G(\hat{y})$ we
have that $B_{\rho}(y)\subset B_{2\rho}(\hat{y})\cap\lbrack V>\bar{\lambda}].$
Therefore, since
\[
B_{\rho}(\hat{y})\cap G(\hat{y})\subset B_{2\rho}(\hat{y})\cap\lbrack
V>\bar{\lambda}]\subset B_{\bar{\rho}}(\bar{y})\cap\lbrack V>\bar{\lambda
}]\cap\dom V,
\]
the claim follows from  Theorem \ref{heartlocal}.

To go further in the proof, we fix two parameters $\varepsilon,\delta>0$ and we 
introduce the set $E_{\varepsilon,\delta}\subset\mathbb{R}_{+}$ defined as
\[
E_{\varepsilon,\delta}:=\left\{  \lambda\in\mathbb{R}_{+}\left\vert
\begin{array}
[c]{l}%
\exists\rho_{1},\rho_{2}\in(\frac{\rho}{2},\rho),\rho_{1}<\rho_{2},\exists
\rho_{\lambda}\in(\frac{\rho}{2},\rho_{2}),\text{ }\forall y\in B_{\rho
_{\lambda}}(\hat{y})\cap G(\hat{y}),\text{ }\forall t\leq\lambda:\\
\\
e^{at}V_{\delta}(x(t; y))+\int_{0}^{t}W(x(\tau;y))d\tau\leq V(y)+\varepsilon
(\rho_{1}-\frac{\rho}{2})(\rho-\rho_{2})
\end{array}
\right.  \right\}  ,
\]
where $V_{\delta}:H\rightarrow\mathbb{R}$ is the function given by
\[
V_{\delta}(y):=\inf_{z\in H}\{ V(z)+\frac{1}{\delta}\left\Vert y-z\right\Vert ^{2}\}.
\]
$V_{\delta}$  is dominated by $V$ and  is Lipschitz continuous on the bounded sets of $H.$
 Then, we have
that $[0,T]\subset E_{\varepsilon,\delta}, $ that is, $E_{\varepsilon,\delta
}\neq\emptyset.$ Next, we shall show that $E_{\varepsilon,\delta}%
=\mathbb{R}_{+}$ or, equivalentely, that $E_{\varepsilon,\delta}$ is closed
and open with respect to the usual topolgy on $\mathbb{R}_{+}.$

\textbf{Claim}: $E_{\varepsilon,\delta}$ is closed.

\noindent Let a sequence $(\lambda_{n})_{n\in\N}\subset E_{\varepsilon,\delta}$ be such that
$\lambda_{n}\rightarrow\tilde{\lambda}$ and, by the definition of
$E_{\varepsilon,\delta},$ take $(\rho_{1,n})_{n\in\N},(\rho_{2,n})_{n\in\N},$\newline
$(\rho
_{3,n})_{n\in\N}\subset(\frac{\rho}{2},\rho)$ be such that
\[
\rho_{1,n}<\rho_{2,n},\text{ }\rho_{3,n}\in(\frac{\rho}{2},\rho_{2,n}),
\]
together with the relation 
\[
e^{at}V_{\delta}(x(t; y))+\int_{0}^{t}W(x(\tau;y))d\tau\leq V(y)+\varepsilon
(\rho_{1,n}-\frac{\rho}{2})(\rho-\rho_{2,n})\text{ \ }\ \text{\ }\forall y\in
B_{\rho_{3,n}}(\hat{y})\cap G(\hat{y}),
\]
valid for all $ t\leq\lambda_{n}$.
Because all the sequences $(\rho_{1,n})_{n\in \N},(\rho_{2,n})_{n\in \N},$ and
$(\rho_{3,n})_{n\in\N}$ are bounded, on relabeling  if necessary, we may
suppose that $\rho_{1,n}\rightarrow\rho_{1}\in\lbrack\frac{\rho}{2},\rho]$,
$\rho_{2,n}\rightarrow\rho_{2}\in\lbrack\frac{\rho}{2},\rho_{2}],$ and
$\rho_{3,n}\rightarrow\hat{\rho}\in\lbrack\frac{\rho}{2},\rho_{2}].$ As well,
it is enough to suppose that $\tilde{\lambda}>$ $T$ and $\tilde{\lambda
}>\lambda_{n}$ for all $n$ because, otherwise, either $\tilde{\lambda}%
\leq\lambda_{n}$ for some $n$ or $\tilde{\lambda}\leq T;$ hence in both cases
we have  $\tilde{\lambda}\in E_{\varepsilon,\delta}.$

If $y\in B_{\hat{\rho}}(\hat{y})\cap G(\hat{y})$ and $t<\tilde{\lambda},$ for
all $n$ large enough we get that $y\in B_{\rho_{n}}(\hat{y})\cap G(\hat{y})$ and
$t<\lambda_{n}$ and, so,
\[
e^{at}V_{\delta}(x(t; y))+\int_{0}^{t}W(x(\tau;y))d\tau\leq V(y)+\varepsilon
(\rho_{1,n}-\frac{\rho}{2})(\rho-\rho_{2,n}).
\]
As $n$ goes to $\infty$ we obtain that
\[
e^{at}V_{\delta}(x(t; y))+\int_{0}^{t}W(x(\tau;y))d\tau\leq V(y)+\varepsilon
(\rho_{1}-\frac{\rho}{2})(\rho-\rho_{2});
\]
this inequality also holds for $t=\tilde{\lambda}$ in view of the continuity
of $V_{\delta}.$ It is also useful to notice here that for all $y\in
\overline{B}_{\frac{\rho}{2}}(\hat{y})\cap G(\hat{y})$ and $t\leq
\tilde{\lambda}$
\begin{equation}
e^{at}V_{\delta}(x(t; y))+\int_{0}^{t}W(x(\tau;y))d\tau\leq V(y). \label{vn}%
\end{equation}

Now,  checking the possible values of $\rho_{1},\rho_{2},$ and $\hat{\rho}%
$\ we observe that only two cases may occur: the first corresponds to
$(\rho_{1}-\frac{\rho}{2})(\rho-\rho_{2})=0$ and happens when $\rho_{1}%
=\frac{\rho}{2},$ $\rho_{2}=\rho,$ or $\rho_{2}=\rho;$ this last equality
implies that $\frac{\rho}{2}\leq\rho_{1}\leq\rho_{2}\leq\frac{\rho}{2}$ and,
so, $(\rho_{1}-\frac{\rho}{2})(\rho-\rho_{2})=0.$ While the second case
corresponds to $(\rho_{1}-\frac{\rho}{2})(\rho-\rho_{2})>0$ and happens when
$\rho_{1},\rho_{2}\in(\frac{\rho}{2},\rho).$

To  begin with, we   analyze the case $(\rho_{1}-\frac{\rho}{2})(\rho-\rho_{2})>0.$
This necessarily implies that $\rho_{2}>\frac{\rho}{2}$ in view of the
inequality $\frac{\rho}{2}\leq\rho_{1}\leq\rho_{2}$. We may suppose that
$\hat{\rho}=\frac{\rho}{2}$ because otherwise $\hat{\rho}\in(\frac{\rho}%
{2},\rho_{2})$ trivially yields $\tilde{\lambda}\in E_{\varepsilon,\delta}.$
So, in order to prove that $\tilde{\lambda}\in E_{\varepsilon,\delta},$ we
only need to find some $\beta>0$ such that $\frac{\rho}{2}+\beta\in(\frac
{\rho}{2},\rho_{2})$ and for all $y\in B_{\frac{\rho}{2}+\beta}(\hat{y})\cap
G(\hat{y})$ and $t\leq\tilde{\lambda},$%
\begin{equation}
e^{at}V_{\delta}(x(t; y))+\int_{0}^{t}W(x(\tau; y))d\tau\leq V(y)+\varepsilon
(\rho_{1}-\frac{\rho}{2})(\rho-\rho_{2}). \label{asd}%
\end{equation}
Proceeding by contradiction, we assume that for each $k\geq1$ verifying
$\frac{\rho}{2}+\frac{1}{k}\in(\frac{\rho}{2},\rho_{2}),$ there exist
$y_{k}\in B_{\frac{\rho}{2}+\frac{1}{k}}(\hat{y})\cap G(\hat{y})$ and
$0<t_{k}\leq\tilde{\lambda}$ such that
\begin{equation}
e^{at_{k}}V_{\delta}(x(t_{k};y_{k}))+\int_{0}^{t_{k}}W(x(\tau;y_{k}%
))d\tau>V(y_{k})+\varepsilon(\rho_{1}-\frac{\rho}{2})(\rho-\rho_{2}).
\label{hg}%
\end{equation}
Because of (\ref{vn}) we must have $(y_{k})_{k}\subset B_{\frac{\rho}{2}%
+\frac{1}{k}}(\hat{y})\setminus\overline{B}_{\frac{\rho}{2}}(\hat{y}).$
W.l.o.g. we may suppose that $t_{k}\rightarrow\tilde{t}\leq\tilde{\lambda}.$
For each $k,$ we denote by $\tilde{y}_{k}\in\overline{B}_{\frac{\rho}{2}}%
(\hat{y})$ the orthogonal projection of $y_{k}$ onto $\overline{B}_{\frac
{\rho}{2}}(\hat{y}).$ Thus, from one hand, we may also suppose that
$(\tilde{y}_{k})_{k}$ weakly converges to some $\tilde{y}\in\overline
{B}_{\frac{\rho}{2}}(\hat{y}).$ Furthermore, from the inequality $\left\Vert
y_{k}-\tilde{y}_{k}\right\Vert \leq\frac{1}{k}$ we infer that $y_{k}$ also
weakly converges to $\tilde{y}$ and, so, by the weak continuity of $V$ on
$B_{\rho}(\hat{y}),$%
\begin{equation}
V(\tilde{y})=\lim_{k}V(\tilde{y}_{k})=\lim_{k}V(y_{k}). \label{wls}%
\end{equation}
Hence,
\[
\left\vert V(\tilde{y})\right\vert =\lim_{k}\left\vert V(\tilde{y}%
_{k})\right\vert =\lim_{k}\left\vert V(y_{k})\right\vert \leq\left\vert
V(\hat{y})\right\vert +1.
\]
In particular, (w.l.o.g.) this implies that $$(\tilde{y}_{k})_{k}\cup
\{\tilde{y}\}\subset\overline{B}_{\frac{\rho}{2}}(\hat{y})\cap\lbrack
\left\vert V\right\vert \leq\left\vert V(\hat{y})\right\vert +1]=\overline
{B}_{\frac{\rho}{2}}(\hat{y})\cap G(\hat{y}).$$
On the other hand,   the absolute continuity of $x(\cdot; \tilde{y}_{k})$ yields\[
x(t_{k}; \tilde{y}_{k})-\tilde{y}_{k}=\int_{0}^{t_{k}}\dot{x}(\tau;\tilde
{y}_{k})d\tau
\]
and,   since that $\dot{x}(\cdot; \tilde{y}_{k})\in L^{\infty}([0,\tilde
{\lambda}];H),$ the following holds:
\begin{align*}
\left\Vert x(t_{k};\tilde{y}_{k})-\tilde{y}_{k}\right\Vert \leq t_{k}%
\sup_{\tau\in\lbrack0,t_{k}]}\left\Vert \dot{x}(\tau;\tilde{y}_{k}%
)\right\Vert  &  \leq t_{k}\sup_{\tau\in\lbrack0,t_{k}]}e^{L_{f}\tau
}\left\Vert (f(\tilde{y}_{k})-A\tilde{y}_{k})^{\circ}\right\Vert \\
&  \leq\tilde{\lambda}e^{L_{f}\tilde{\lambda}}\sup_{z\in\overline{B}%
_{\frac{\rho}{2}}(\hat{y})}e^{L_{f}\tau}\left\Vert (f(z)-Az)^{\circ
}\right\Vert \leq M\tilde{\lambda}e^{L_{f}\tilde{\lambda}}.
\end{align*}
Hence, w.l.o.g. we may suppose that the bounded sequence $(x(t_{k}; \tilde
{y}_{k}))_{k\in\N}$ weakly converges in $H.$ Furthermore,  the inequality
\[
\left\Vert x(t_{k};y_{k})-x(t_{k};\tilde{y}_{k})\right\Vert \leq e^{L_{f}%
t_{k}}\left\Vert y_{k}-\tilde{y}_{k}\right\Vert \leq\frac{e^{L_{f}\tilde{t}}%
}{k},
\]
infers that the both   sequences $(x(t_{k};y_{k}))_{k\in\N}$ and $(x(t_{k};\tilde{y}_{k}%
))_{k\in\N}$ weakly converge to the same point in $H.$

On another hand, since the sequences  $(x(t_{k};y_{k}))_{k\in\N}$ and $(x(t_{k};\tilde
{y}_{k}))_{k\in\N}$ are bounded, there exits some $l\geq0$ such that for
all $t\leq\tilde{t}$%
\begin{align*}
\left\vert W(x(t; y_{k}))-W(x(t; \tilde{y}_{k}))\right\vert +\left\vert
V_{\delta}(x(t; y_{k}))-V_{\delta}(x(t; \tilde{y}_{k}))\right\vert  &  \leq
l\left\Vert x(t; y_{k})-x(t; \tilde{y}_{k})\right\Vert \\
&  \leq\frac{le^{L_{f}\tilde{t}}}{k}%
\end{align*}
and, so, we deduce that (w.l.o.g.)%
\begin{equation}
\lim_{k}V_{\delta}(x(t_{k};y_{k}))=\lim_{k}V_{\delta}(x(t_{k};\tilde{y}%
_{k}))\text{ and }\lim_{k}W(x(t_{k};y_{k}))=\lim_{k}W(x(t_{k};\tilde{y}_{k}).
\label{lli}%
\end{equation}
Using Lebesgue's Theorem, this infers
\begin{align*}
&\lim_{k}\left[  e^{a\tilde{t}}V_{\delta}(x(t_{k};\tilde{y}_{k}))+\int_{0}^{\tilde{t}}W(x(\tau;\tilde{y}_{k}))d\tau\right] 
\\& \qquad\qquad\qquad\qquad\ \ =e^{a\tilde{t}}\lim_{k}V_{\delta}(x(t_{k};\tilde{y}_{k}))+\int_{0}^{\tilde{t}}\lim_{k}W(x(\tau;\tilde{y}_{k}))d\tau\\
&\qquad\qquad\qquad\qquad\ \  =e^{a\tilde{t}}\lim_{k}V_{\delta}(x(t_{k};y_{k}))+\int_{0}^{\tilde{t}}\lim_{k}W(x(\tau;y_{k}))d\tau\\
&\qquad\qquad\qquad\qquad\ \  =\lim_{k}\left[  e^{at_{k}}V_{\delta}(x(t_{k};y_{k}))+\int_{0}^{t_{k}}W(x(\tau;y_{k}))d\tau\right] . 
\end{align*}
Consequently,  taking limits in (\ref{hg}), and using (\ref{wls}) we obtain
\begin{align} \label{golf}
\lim_{k}\Biggl[  e^{a\tilde{t}}V_{\delta}(x(t_{k};\tilde{y}_{k}))&+\int%
_{0}^{\tilde{t}}W(x(\tau;\tilde{y}_{k}))d\tau\Biggr] \\
&\geq\lim_{k}%
V(y_{k})+\varepsilon(\rho_{1}-\frac{\rho}{2})(\rho-\rho_{2})\nonumber\\
&=V(\tilde{y})+\varepsilon(\rho_{1}-\frac{\rho}{2})(\rho-\rho_{2}%
)\nonumber\\
&=V(\lim_{k}\tilde{y}_{k})+\varepsilon(\rho_{1}-\frac{\rho}{2})(\rho
-\rho_{2})\nonumber\\
&=\lim_{k}V(\tilde{y}_{k})+\varepsilon(\rho_{1}-\frac{\rho}{2})(\rho
-\rho_{2}))\nonumber.%
\end{align}
In other words, for $k$ large enough we have  
\[
e^{a\tilde{t}}V_{\delta}(x(t_{k};\tilde{y}_{k}))+\int_{0}^{\tilde{t}}%
W(x(\tau;\tilde{y}_{k}))d\tau\geq V(\tilde{y}_{k})+\frac{\varepsilon}{2}%
(\rho_{1}-\frac{\rho}{2})(\rho-\rho_{2})>V(\tilde{y}_{k}),
\]
and a contradiction to  (\ref{vn}) as $\tilde{y}_{k}\in\overline
{B}_{\frac{\rho}{2}}(\hat{y})\cap G(\hat{y}),$ and $\tilde{t}\leq
\tilde{\lambda}.$ Hence, we conclude that some $\rho_{\tilde{\lambda}}%
\in(\frac{\rho}{2},\rho_{2})$ exists so that (\ref{asd}) holds for all $y\in
B_{\frac{\rho}{2}+\beta}(\hat{y})\cap G(\hat{y})$ and $t\leq\tilde{\lambda}.$
This fact shows that $\tilde{\lambda}\in E_{\varepsilon,\delta}.$

It remains to analyse the other case corresponding to $(\rho_{1}-\frac{\rho
}{2})(\rho-\rho_{2})=0.$ If this happens, we choose $\tilde{\rho}_{1}%
,\tilde{\rho}_{2}\in(\frac{\rho}{2},\rho)$ such that $\tilde{\rho}_{1}%
<\tilde{\rho}_{2}$ and $(\rho_{1}-\frac{\rho}{2})(\rho-\rho_{2})>0.$ Thus,
following the same argument as in the first case,  taking into account
(\ref{vn}) we can find some $\beta>0,$ with $\hat{\rho}+\beta\in(\frac{\rho
}{2},\rho_{2}),$ so that (\ref{asd}) holds for all $y\in B_{\frac{\rho}%
{2}+\beta}(\hat{y})\cap G(\hat{y}).$ This shows that $\tilde{\lambda}\in
E_{\varepsilon,\delta}$ and, hence, establishes  the proof of the closedness of
$E_{\varepsilon,\delta}.$

\textbf{Claim}: $E_{\varepsilon,\delta}$ is open. 

\noindent Fix
$\lambda\in E_{\varepsilon,\delta}$ (it is sufficient to take $\lambda\geq
\nu>0$), and let $\rho_{1},\rho_{2}\in(\frac{\rho}{2},\rho)$ and $\hat{\rho
}\in(\frac{\rho}{2},\rho_{2})$\ be such that $\rho_{1}<\rho_{2}$ and, for all
$y\in B_{\hat{\rho}}(\hat{y})\cap G(\hat{y})$ and $t\leq\lambda,$
\begin{equation}
e^{at}V_{\delta}(x(t; y))+\int_{0}^{t}W(x(\tau;y))d\tau\leq V(y)+\varepsilon
(\rho_{1}-\frac{\rho}{2})(\rho-\rho_{2}). \label{nm}%
\end{equation}
We let $\hat{\nu}>0$ verify
\[
\hat{\nu}\leq\min\{\nu,\lambda\},\text{ }\frac{\rho}{2}<\hat{\rho}-M\hat{\nu
}<\rho_{2},\text{ and }\frac{\rho}{2}<e^{a\hat{\nu}}\rho_{1}<\rho_{2}.
\]
So, from one hand, for all $0\leq\alpha\leq\hat{\nu}$ and $y\in B_{\hat{\rho
}-M\hat{\nu}}(\hat{y})\cap G(\hat{y})$ it holds, by Lemma \ref{lemarint},
\begin{equation}
\left\Vert x(\alpha; y)-\hat{y}\right\Vert \leq M\alpha+\left\Vert y-\hat
{y}\right\Vert <M\alpha+\hat{\rho}-M\hat{\nu}\leq\hat{\rho}, \label{vlsc}%
\end{equation}
where $M\geq0$ is defined in (\ref{pr}). Hence, by the choice of $\nu$
($\nu\leq\lambda$)$,$ from (\ref{nm}) we infer that $$V(x(\alpha; y))\leq
V(y)\leq\left\vert V(\hat{y})\right\vert +1.$$ Thus,  taking into account
(\ref{vlsc1}) we obtain that
\[
x(\alpha; y)\in B_{\hat{\rho}}(\hat{y})\cap G(\hat{y}).
\]
Now fix $y\in B_{\hat{\rho}-M\hat{\nu}}(\hat{y})\cap G(\hat{y})$ and
$t\in\lbrack0,\lambda].$ From above we have that
\begin{equation}
x(\hat{\nu},y)\in B_{\hat{\rho}}(\hat{y})\cap G(\hat{y}) \label{we}%
\end{equation}
and, so,  applying (\ref{nm}) we get that
\[
e^{at}V_{\delta}(x(t; x(\hat{\nu},y)))+\int_{0}^{t}W(x(\tau;x(\hat{\nu
},y)))d\tau\leq V(x(\hat{\nu},y))+\varepsilon(\rho_{1}-\frac{\rho}{2}%
)(\rho-\rho_{2}).
\]
Thus,  using the semi-group property together with (\ref{nm}) and
(\ref{we}), we infer that
\begin{align*}
e^{a(\hat{\nu}+t)}&V_{\delta}(x(\hat{\nu}+t,y))+\int_{0}^{\hat{\nu}+t}%
W(x(\tau;y))d\tau\\
& Ê=e^{a\hat{\nu}}e^{at}V_{\delta}(x(t; x(\hat{\nu}%
,y)))+\int_{0}^{t}W(x(\tau;x(\hat{\nu},y)))d\tau
+\int_{0}^{\hat{\nu}}W(x(\tau;y))d\tau\\
& Ê\leq e^{a\hat{\nu}}\left[ Êe^{at}V_{\delta}(x(t; x(\hat{\nu},y)))+\int%
_{0}^{t}W(x(\tau;x(\hat{\nu},y)))d\tau\right]+\int_{0}^{\hat{\nu}}W(x(\tau;y))d\tau\\
& Ê\leq e^{a\hat{\nu}}V(x(\hat{\nu},y))+\int_{0}^{\hat{\nu}}W(x(\tau
,y))d\tau+\varepsilon e^{a\hat{\nu}}(\rho_{1}-\frac{\rho}{2})(\rho-\rho_{2}).
\end{align*}
At this step, for the choice that we made on $\hat{\nu}$ ($\hat{\nu}\leq\nu),$
the last inequality above reads, for all $y\in B_{\hat{\rho}-M\hat{\nu}}%
(\hat{y})$ and $t\in\lbrack0,\hat{\nu}+\lambda],$
\begin{align*}
e^{a(\hat{\nu}+t)}V_{\delta}(x(\hat{\nu}+t,y))+\int_{0}^{\hat{\nu}+t}%
W(x(\tau;y))d\tau &  \leq V(y)+\varepsilon e^{a\hat{\nu}}(\rho_{1}-\frac{\rho
}{2})(\rho-\rho_{2})\\
&  \leq V(y)+\varepsilon(e^{a\hat{\nu}}\rho_{1}-\frac{\rho}{2})(\rho-\rho
_{2}).
\end{align*}
Consequently, since that $\hat{\rho}-M\hat{\nu}\in(\frac{\rho}{2},\rho_{2})$
and $e^{a\hat{\nu}}\rho_{1}\in(\frac{\rho}{2},\rho_{2})$ it follows that
$[0,\lambda+\hat{\nu}]\subset E_{\varepsilon,\delta}$ and, so, the openness of
$E_{\varepsilon,\delta}$ follows.

In order to conclude the proof, let $y\in\overline{B}_{\frac{\rho}{2}}(\hat
{y})\cap G(\hat{y})$ be given. Then, for every $t\geq0$ we have that $t\in
\cap_{\varepsilon>0}E_{\varepsilon,\delta};$ that is for all $\varepsilon>0$
it holds
\[
e^{at}V_{\delta}(x(t; y))+\int_{0}^{t}W(x(\tau;y))d\tau\leq V(y)+\varepsilon
(\rho-\frac{\rho}{2})(\rho-\frac{\rho}{2})=V(y)+\varepsilon\frac{\rho^{2}}%
{4}.
\]
Hence,  letting $\varepsilon\rightarrow0$ it follows that
\[
e^{at}V_{\delta}(x(t; y))+\int_{0}^{t}W(x(\tau;y))d\tau\leq V(y),
\]
which as $\delta\rightarrow0$ yields (using the fact that $\lim_{\delta
\rightarrow0}V_{\delta}(x(t; y))=V(x(t; y))$  
\[
e^{at}V(x(t; y))+\int_{0}^{t}W(x(\tau;y))d\tau\leq V(y).
\]
Now, if $\bar{z}\in B_{\rho}(\hat{y})\cap\dom V,$ then similarly
as above, we can find $\rho_{\bar{z}}>0$ such that for every $z\in\overline
{B}_{\frac{\rho_{\bar{z}}}{2}}(\bar{z})\cap G(\bar{z})$ (where $G(\bar{z})$ is
defined as in (\ref{gyb})) we have that
\[
e^{at}V(x(t; z))+\int_{0}^{t}W(x(\tau;z))d\tau\leq V(z)\text{ \ for all }%
t\geq0.
\]
Thus, the main conclusion of the current theorem follows since that the last
inequality obviously holds when $\bar{z}\notin\dom V.$\smartqed 
\end{proof}

\begin{remark}
\label{corcon}\emph{The conclusion of Theorem \ref{heartlocalfinite}%
}\ \emph{also holds if, instead of }$V$\emph{ being weak continuous on
}$B_{\rho}(\bar{y}),$\emph{ we assume that either} $H$\emph{ is
finite-dimensional or} $V$ \emph{is convex}$.$
\end{remark}

\begin{proof}
The only difference with the proof of Theorem \ref{heartlocalfinite} arises in
showing (\ref{asd}).

(a) Assume that $H$ is finite-dimensional. Let us show that (\ref{asd}) holds.
Assuming the contrary, we find bounded sequencse $y_{k}\in B_{\frac{\rho}%
{2}+\frac{1}{k}}(\bar{y})\cap G(\bar{y})$ and $0<t_{k}\leq\tilde{\lambda}$
such that (\ref{hg}) holds. W.l.o.g. we may suppose that $t_{k}\rightarrow
\tilde{t}\leq\tilde{\lambda}$ and $y_{k}\rightharpoonup\tilde{y}\in
\overline{B}_{\frac{\rho}{2}}(\bar{y}).$ Furthermore, we have that
\[
V(\tilde{y})\leq\liminf_{k}V(y_{k})\leq\left\vert V(\bar{y})\right\vert +1,
\]
while (\ref{vlsc1}) guarantees that $V(\tilde{y})\geq V(\bar{y})-1.$ Hence, we
also have that $\tilde{y}\in\lbrack\left\vert V\right\vert \leq\left\vert
V(\bar{y})\right\vert +1].$ Now, recalling that $x(t_{k};y_{k})$ converges to
$x(\tilde{t},\tilde{y})$ in this case, it follows that
\begin{align*}
e^{a\tilde{t}}V_{\delta}(x(\tilde{t},\tilde{y}))&+\int_{0}^{\tilde{t}}%
W(x(\tau;\tilde{y}))d\tau\\
&  =e^{a\tilde{t}}V_{\delta}(\lim_{k}x(t_{k}%
,y_{k}))+\int_{0}^{\tilde{t}}W(\lim_{k}x(\tau;y_{k}))d\tau\\
&  =\lim_{k}\left[  e^{a\tilde{t}}V_{\delta}(x(t_{k};y_{k}))+\int_{0}%
^{\tilde{t}}W(x(\tau;y_{k}))d\tau\right] \\
&  \geq\liminf_{k}V(y_{k})+\varepsilon(\rho_{1}-\frac{\rho}{2})(\rho-\rho
_{2})\\
&  \geq V(\tilde{y})+\varepsilon(\rho_{1}-\frac{\rho}{2})(\rho-\rho
_{2})>V(\tilde{y}),
\end{align*}
which contradicts (\ref{vn}).

(b) Assume that $V$ is convex. We consider again the sequences of the proof of
Theorem \ref{heartlocalfinite}, $(y_{k})_{k\in\N}\subset B_{\frac{\rho}{2}+\frac
{1}{k}}(\bar{y})\cap G(\bar{y})\setminus\overline{B}_{\frac{\rho}{2}}(\bar
{y})$ and $(\tilde{y}_{k})_{k\in\N}\subset\overline{B}_{\frac{\rho}{2}}(\bar{y}),$
which both converge to $\tilde{y}\in\overline{B}_{\frac{\rho}{2}}(\bar{y})\cap
G(\bar{z}).$ Since that each $\tilde{y}_{k}\in\lbrack y_{k},\bar{y}],$ with
$k\geq1,$ we find $\beta_{k}\in\lbrack0,1]$ such that $\tilde{y}_{k}%
:=\beta_{k}y_{k}+(1-\beta_{k})\bar{y}$, this  yields
\[
V(\tilde{y}_{k})\leq\beta_{k}V(y_{k})+(1-\beta_{k})V(\bar{y}).
\]
We notice that $1\geq\beta_{k}\geq\frac{k\rho}{k\rho+2}$  since  by
construction,  $\tilde{y}_{k}$ is on the boundary of $\overline{B}_{\frac{\rho
}{2}}(\bar{y})$ and $y_{k}\in B_{\frac{\rho}{2}+\frac{1}{k}}(\bar{y}).$ Thus,
we may suppose that $\beta_{k}\rightarrow1.$ Consequently,  taking limits
in the inequality above,
\[
\liminf_{k}V(\tilde{y}_{k})\leq\lim_{k}\beta_{k}\liminf_{k}V(y_{k}%
)=\liminf_{k}V(y_{k}).
\]
Hence, as in (\ref{golf}),  using (\ref{lli}) we obtain that
\begin{align*}
\lim_{k} \Biggl[  e^{a\tilde{t}}V_{\delta}(x(t_{k};\tilde{y}_{k}))&+\int%
_{0}^{\tilde{t}}W(x(\tau;\tilde{y}_{k}))d\tau \Biggr]   \\
& =e^{a\tilde{t}}
\lim_{k}V_{\delta}(x(t_{k};\tilde{y}_{k}))+\int_{0}^{\tilde{t}}\lim
_{k}W(x(\tau;\tilde{y}_{k}))d\tau\\
& =e^{a\tilde{t}}\lim_{k}V_{\delta}(x(t_{k};y_{k}))+\int_{0}^{\tilde{t}}%
\lim_{k}W(x(\tau;y_{k}))d\tau\\
&=\lim_{k}\left[  e^{at_{k}}V_{\delta}(x(t_{k};y_{k}))+\int_{0}^{t_{k}%
}W(x(\tau;y_{k}))d\tau\right] \\
&  \geq\liminf_{k}V(y_{k})+\varepsilon(\rho_{1}-\frac{\rho}{2})(\rho-\rho
_{2})\\
& \geq\liminf_{k}V(\tilde{y}_{k})+\varepsilon(\rho_{1}-\frac{\rho}{2}%
)(\rho-\rho_{2}),
\end{align*}
which contradicts  (\ref{vn}).\smartqed 
\end{proof}

\bigskip

\begin{corollary}
\label{cn1}Assume that $\inte\left(  \operatorname*{co}%
\{\dom A\}\right)  \neq\emptyset.$ Let $V\in\mathcal{F}(H)$ be
convex, and let $W\in\mathcal{F}(H;\overline{{\mathbb{R}}}_{+})$ and
$a\in{\mathbb{R}}_{+}$ be given. Fix $\bar{y}\in\inte%
(\dom A)\cap\dom V,$ and let $\rho>0$ be such that
$B_{2\rho}(\bar{y})\subset\inte(\dom A).$ For all
$y\in B_{2\rho}(\bar{y})\cap\dom V$ we assume that
\[
\sup\limits_{\xi\in\partial_{P}V(y)}\inf\limits_{\upsilon \in Ay}\left\langle
\xi,f(y)-\upsilon \right\rangle +aV(y)+W(y)\leq0.
\]
Then, for all $y\in B_{\rho}(\bar{y})$ we have that
\[
e^{at}V(x(t; y))+\int_{0}^{t}W(x(\tau;y))d\tau\leq V(y)\text{ \ for all }%
t\geq0.
\]

\end{corollary}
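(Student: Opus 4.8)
The plan is to obtain the statement as a direct specialization of Theorem~\ref{heartlocalfinite}, invoking the convex case recorded in Remark~\ref{corcon} (so that the weak-continuity requirement on $V$ is not needed). First I would set $\bar\rho:=2\rho$; since $B_{2\rho}(\bar y)\subset\inte(\dom A)$ by hypothesis, the inclusion $\dom V\cap[V>\bar\lambda]\cap B_{\bar\rho}(\bar y)\subset\inte(\dom A)$ required by the theorem holds automatically, for \emph{every} choice of $\bar\lambda$. It then remains to fix an admissible $\bar\lambda\in(-\infty,V(\bar y))$ and to check that the corollary's hypothesis coincides with condition~(i) of the theorem on the relevant set.

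To choose $\bar\lambda$, I would use convexity: a proper lsc convex function admits a continuous affine minorant $\ell(z)=\langle\xi_0,z\rangle+c$, so $V$ is bounded below on the bounded ball $B_{2\rho}(\bar y)$, say $V\ge m$ there with $m\in\mathbb{R}$. Taking any $\bar\lambda<m$ gives $\bar\lambda<m\le V(\bar y)$ (recall $\bar y\in\dom V$), so $\bar\lambda$ is admissible, and moreover every $y\in B_\rho(\bar y)\cap\dom V$ satisfies $V(y)\ge m>\bar\lambda$, i.e. $y\in\dom V\cap B_{2\rho}(\bar y)\cap[V>\bar\lambda]$. Next, because $y\in B_{2\rho}(\bar y)\subset\inte(\dom A)$ and $A$ is locally bounded there, $Ay$ is a closed, bounded, convex, hence weakly compact, subset of $H$, while $\upsilon\mapsto\langle\xi,f(y)-\upsilon\rangle$ is weakly continuous; therefore the infimum over $Ay$ in the corollary's inequality is attained, so that $\inf_{\upsilon\in Ay}=\min_{\upsilon\in Ay}$ and the assumed inequality is exactly condition~(i) of Theorem~\ref{heartlocalfinite} on $\dom V\cap B_{2\rho}(\bar y)\cap[V>\bar\lambda]$.

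Applying Theorem~\ref{heartlocalfinite} together with the convex case of Remark~\ref{corcon} then yields
\[
e^{at}V(x(t;y))+\int_0^t W(x(\tau;y))\,d\tau\le V(y)\quad\text{for all }t\ge0
\]
for every $y\in\dom V\cap B_{2\rho}(\bar y)\cap[V>\bar\lambda]$, in particular for every $y\in B_\rho(\bar y)\cap\dom V$ by the previous step. For $y\in B_\rho(\bar y)\setminus\dom V$ the inequality holds trivially, since then $V(y)=+\infty$ while $x(\cdot;y)$ is still well defined ($y\in\inte(\dom A)$); this finishes the argument.

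The one point that needs care, and the main obstacle, is that Theorem~\ref{heartlocalfinite} carries the standing assumption $\inf V>-\infty$, whereas here $V$ is merely convex and may be unbounded below. I would resolve this precisely through the affine minorant used above: on the one hand it supplies the finite lower bound $m$ on $B_{2\rho}(\bar y)$ needed to pick $\bar\lambda$; on the other hand, since $V(z)+\tfrac1\delta\|y-z\|^2\ge\ell(z)+\tfrac1\delta\|y-z\|^2\to+\infty$ as $\|z\|\to\infty$, the Moreau--Yosida regularization $V_\delta$ driving the continuation argument (the set $E_{\varepsilon,\delta}$) in the proof of the theorem stays finite-valued and Lipschitz on bounded sets, with $V_\delta\uparrow V$. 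Thus global lower-boundedness is not genuinely needed in the convex case, and the proof of Remark~\ref{corcon} applies verbatim.
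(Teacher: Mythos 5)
Your proposal is correct in substance, but it takes a genuinely different route from the paper's own proof. The paper does not specialize Theorem~\ref{heartlocalfinite} directly as you do; following the template of \cite{AHT2012}, it treats the horizontal (singular) subgradients as the real point to be checked: its proof reduces the corollary to showing that the assumed proximal inequality propagates to every $\xi\in\partial_{\infty}V(y)=\mathrm{N}_{\dom V}(y)$, i.e.\ that some $\upsilon\in Ay$ satisfies $\langle\xi,f(y)-\upsilon\rangle\leq 0$, and establishes this by purely convex-analytic means --- nonemptiness of $\partial_{\varepsilon}V(y)$, the recession identity $\mathrm{N}_{\dom V}(y)=(\partial_{\varepsilon}V(y))_{\infty}$, the Br{\o}ndsted--Rockafellar theorem to produce nearby points $y_{k}\in\dom V$ carrying exact subgradients, the hypothesis applied at those $y_{k}$, and finally the local boundedness of $A$ on $\inte(\dom A)$ together with the strong--weak closedness of its graph to extract $\upsilon\in Ay$ in the limit. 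You bypass $\partial_{\infty}V$ entirely by reading condition (i) of Theorem~\ref{heartlocalfinite} literally (proximal subgradients only); this is defensible, because in the proof of Proposition~\ref{heartlocal} the horizontal case ($V(y)-\mu=0$) is handled by synthesizing horizontal normals from proximal subgradients at nearby points, which works precisely because everything takes place in $\inte(\dom A)$ where $A$ is locally bounded --- the paper's Br{\o}ndsted--Rockafellar computation is the convex-analysis counterpart of that same synthesis. In exchange, your route attends to two points the paper's proof passes over in silence: the gap between $\inf$ and $\min$ over $Ay$ (your weak-compactness argument is sound, since $Ay$ is closed, convex and, at interior points, bounded) and the hypothesis $\inf V>-\infty$ of Theorem~\ref{heartlocalfinite}, which the paper invokes without comment even though a convex $V$ need not be bounded below, while you remove it via the affine minorant that keeps $V_{\delta}$ finite and Lipschitz on bounded sets with $V_{\delta}\to V$. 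The only soft spot in your argument is that this last step is a claim about the \emph{proof} of Theorem~\ref{heartlocalfinite} and Remark~\ref{corcon} rather than their statements, so strictly you owe the reader the audit that global lower boundedness enters only through $V_{\delta}$ and through local lower bounds already supplied by lower semicontinuity; your sketch identifies exactly these places, so I see no genuine gap, and arguably your version is more careful than the paper's on this point.
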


\begin{proof}
According to Theorem \ref{heartlocalfinite} and Remark \ref{corcon}, it
suffices to show that the current assumption \textbf{i}mplies that, for every
given $y\in B_{2\rho}(\bar{y})\cap\dom V$ and $\xi\in
\partial_{\infty}V(y)=\mathrm{N}_{\dom V}(y)$ (if any), there
exists $\upsilon \in Ay$ such that
\begin{equation}
\langle\xi,f(y)-\upsilon \rangle\leq0. \label{clm}%
\end{equation}
To prove this fact, by the lsc of $V$ we let $\varepsilon>0$ be such that
\[
B_{\sqrt{\varepsilon}}(y)\subset\inte(\operatorname*{cl}%
(\dom A)),\text{ }V(B_{\sqrt{\varepsilon}}(y))\geq V(y)-1.
\]
Pick $y_{\varepsilon}\in\partial_{\varepsilon}V(y);$ this last set is not
empty since that $V\in\mathcal{F}(H)$ is a convex function. Then, from the
relationship $\mathrm{N}_{\dom V}(y)=(\partial_{\varepsilon
}V(y))_{\infty}$ (e.g. ), for every $k\in\mathbb{N}$ we have that
\[
y_{\varepsilon}+k\xi\in\partial_{\varepsilon}V(y).
\]
According to the Br\o ndsted-Rockafellar  Theorem, there are $y_{k}\in B_{\sqrt{\varepsilon}%
}(y)$ and $u_{k}\in B_{\sqrt{\varepsilon}}(\theta)$ such that
\[
y_{\varepsilon}+k\xi\in\partial_{\varepsilon}V(y_{k})+u_{k};
\]
that is, in particular, $y_{k}\in\dom V.$ Consequently, by the
current assumption we get that
\begin{align*}
k\langle\xi,f(y)-\pi_{Ay_{k}}(f(y_{k}))\rangle &  \leq\langle u_{k}%
-y_{\varepsilon},f(y_{k})-\pi_{Ay_{k}}(f(y_{k}))\rangle-aV(y_{k})-W(y_{k})\\
&  \leq\langle u_{k}-y_{\varepsilon},f(y_{k})-\pi_{Ay_{k}}(f(y_{k}%
))\rangle-aV(y)+a\\
&  +k\langle\xi,f(y)-f(y_{k})\rangle\\
&  \leq\langle u_{k}-y_{\varepsilon},f(y_{k})-\pi_{Ay_{k}}(f(y_{k}%
))\rangle-aV(y)+a+kL_{f}\sqrt{\varepsilon}\left\Vert \xi\right\Vert .
\end{align*}
Since that $\langle u_{k}-y_{\varepsilon},f(y_{k})-\pi_{Ay_{k}}(f(y_{k}%
))\rangle$ is bounded independently of $k$, for $k_{\varepsilon}\geq1$ big
enough we get that
\[
\langle\xi,f(y)-\pi_{Ay_{k_{\varepsilon}}}(f(y_{k_{\varepsilon}}))\rangle
\leq\sqrt{\varepsilon}+L_{f}\sqrt{\varepsilon}\left\Vert \xi\right\Vert .
\]
Moreover, as $y_{k_{\varepsilon}}\in B_{\sqrt{\varepsilon}}(y)$ and
$\zeta_{\varepsilon}:=\pi_{Ay_{k_{\varepsilon}}}(f(y_{k_{\varepsilon}}))$
($\in Ay_{k_{\varepsilon}}$) is bounded independently of $k_{\varepsilon},$ we
may suppose as $\varepsilon\rightarrow0$\ that $(\zeta_{\varepsilon})$ weakly
converges to some $\upsilon \in Ay.$ Thus, taking limits in the last inequality
above we get that $\langle\xi,f(y)-\upsilon \rangle\leq0;$ that is (\ref{clm}) follows.\smartqed 
\end{proof}

\begin{corollary}
\label{cn2}Assume that $\dim H<\infty.$ Let $V\in\mathcal{F}(H)$,
$W\in\mathcal{F}(H;\overline{{\mathbb{R}}}_{+}),$ and $a\in{\mathbb{R}}_{+}$
be given. Fix $\bar{y}\in\inte(\dom A),$ and let
$\rho>0$ be such that $B_{2\rho}(\bar{y})\subset\inte%
(\dom A).$ For all $y\in B_{2\rho}(\bar{y})\cap
\dom V$ we assume that
\[
\sup\limits_{\xi\in\partial_{P}V(y)}\inf\limits_{\upsilon \in Ay}\left\langle
\xi,f(y)-\upsilon \right\rangle +aV(y)+W(y)\leq0.
\]
Then, for all $y\in B_{\rho}(\bar{y})$ we have that
\[
e^{at}V(x(t; y))+\int_{0}^{t}W(x(\tau;y))d\tau\leq V(y)\text{ \ for all }%
t\geq0.
\]

\end{corollary}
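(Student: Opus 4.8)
The plan is to derive Corollary~\ref{cn2} from Theorem~\ref{heartlocalfinite} together with Remark~\ref{corcon}(a), following the template of Corollary~\ref{cn1} but trading the convexity-based tools for finite-dimensional compactness. Since $B_{2\rho}(\bar y)\subset\inte(\dom A)$, the containment hypothesis of the theorem holds with $\bar\rho:=2\rho$ and with any choice of $\bar\lambda$, because $\dom V\cap[V>\bar\lambda]\cap B_{2\rho}(\bar y)\subset B_{2\rho}(\bar y)\subset\inte(\dom A)$. For each such $y$ the set $Ay$ is nonempty, convex and, by the local boundedness of $A$ on $\inte(\dom A)$, bounded, hence compact; thus the infimum in the standing assumption is attained and coincides with the minimum appearing in condition~(i) of Theorem~\ref{heartlocalfinite}. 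The assumption of the corollary is therefore precisely condition~(i) on $\dom V\cap B_{2\rho}(\bar y)$, and because $\dim H<\infty$, Remark~\ref{corcon}(a) dispenses with the weak-continuity requirement on $V$ used in the theorem.

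The only point that must still be secured is the one isolated in the proof of Corollary~\ref{cn1}: for every $y\in B_{2\rho}(\bar y)\cap\dom V$ and every horizontal subgradient $\xi\in\partial_\infty V(y)$ there exists $\upsilon\in Ay$ with $\langle\xi,f(y)-\upsilon\rangle\leq0$, that is (\ref{clm}). This is exactly what makes the estimate (\ref{cc}) of Proposition~\ref{heartlocal} go through in the horizontal case $V(y)-\mu=0$. Where Corollary~\ref{cn1} invoked the Br\o ndsted--Rockafellar theorem, I would instead use the very definition of $\partial_\infty V(y)$: there are $\alpha_k\rightarrow0^+$, points $y_k\rightarrow y$ with $V(y_k)\rightarrow V(y)$, and $\eta_k\in\partial_P V(y_k)$ such that $\alpha_k\eta_k\rightarrow\xi$. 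For $k$ large, $y_k\in B_{2\rho}(\bar y)\cap\dom V$, so applying the standing assumption at $y_k$ produces $\upsilon_k\in Ay_k$ with $\langle\eta_k,f(y_k)-\upsilon_k\rangle+aV(y_k)+W(y_k)\leq0$, whence $\langle\alpha_k\eta_k,f(y_k)-\upsilon_k\rangle\leq-\alpha_k\bigl(aV(y_k)+W(y_k)\bigr)$.

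The main obstacle is the passage to the limit in this inequality, and it is here that finite-dimensionality is decisive. The points $y_k$ eventually lie in a compact neighbourhood of $y$ contained in $\inte(\dom A)$, on which $A$ is locally bounded; hence $(\upsilon_k)_k$ is bounded and, since $\dim H<\infty$, a subsequence converges to some $\upsilon$, which belongs to $Ay$ by the closedness of the graph of the maximally monotone operator $A$. As $V$ is lsc it is bounded below on that compact neighbourhood and $V(y_k)\rightarrow V(y)$ is finite, so the right-hand side $-\alpha_k\bigl(aV(y_k)+W(y_k)\bigr)$ tends to $0$; combining this with $\alpha_k\eta_k\rightarrow\xi$ and $f(y_k)\rightarrow f(y)$ yields $\langle\xi,f(y)-\upsilon\rangle\leq0$, i.e.\ (\ref{clm}). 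This is precisely the step that would fail in an infinite-dimensional space unless one had convexity, as in Corollary~\ref{cn1}, or weak continuity of $V$.

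It remains to remove the auxiliary hypotheses that the theorem carries but the corollary does not, namely $\bar y\in\dom V$ and $\inf V>-\infty$. I would argue pointwise, as at the end of the proof of Theorem~\ref{heartlocalfinite}: for each $\bar z\in B_\rho(\bar y)\cap\dom V$ fix $\bar\lambda<V(\bar z)$ and a ball $B_{\rho_{\bar z}}(\bar z)\subset B_{2\rho}(\bar y)$ on which $V$, being lsc on a relatively compact set, is bounded below, and apply the above to obtain the inequality along the trajectory issued from $\bar z$. For $y\notin\dom V$ the inequality is trivial, its right-hand side being $+\infty$ while its left-hand side never reaches $-\infty$ because $V$ is proper. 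Collecting these conclusions over all of $B_\rho(\bar y)$ gives the statement.
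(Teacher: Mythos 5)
Your proposal is correct and takes essentially the same route as the paper: the paper's proof of Corollary~\ref{cn2} likewise reduces, via Theorem~\ref{heartlocalfinite} and Remark~\ref{corcon} exactly as in Corollary~\ref{cn1}, to verifying (\ref{clm}) for horizontal subgradients $\xi\in\partial_{\infty}V(y)$, and verifies it just as you do --- taking $\alpha_{k}\rightarrow 0^{+}$, $y_{k}\rightarrow y$ with $V(y_{k})\rightarrow V(y)$, $\xi_{k}\in\partial_{P}V(y_{k})$ with $\alpha_{k}\xi_{k}\rightarrow\xi$, producing $y_{k}^{\ast}\in Ay_{k}$ from the standing assumption, extracting $\upsilon\in Ay$ by finite-dimensional boundedness of $(y_{k}^{\ast})_{k}$ and closedness of the graph of $A$, and passing to the limit after multiplying by $\alpha_{k}$. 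Your only deviations are cosmetic: you attain the infimum by compactness of $Ay_{k}$ where the paper allows an $\varepsilon$-slack (note your claim that $-\alpha_{k}\bigl(aV(y_{k})+W(y_{k})\bigr)\rightarrow 0$ is not literally needed, since $W\geq 0$ lets one simply drop that term in the inequality), and your closing paragraph discharging the side hypotheses $\bar{y}\in\dom V$ and $\inf V>-\infty$ is extra housekeeping that the paper's own proof passes over in silence.
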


\begin{proof}
As in the proof of Corollary \ref{cn1}, given $y\in B_{2\rho}(\bar{y}%
)\cap\dom V$ and $\xi\in\partial_{\infty}V(y)$ (if any)$,$ we
only to find some $\upsilon \in Ay$ such that
\[
\langle\xi,f(y)-\upsilon \rangle\leq0.
\]
Fix $\varepsilon>0.$ By definition, we let $\xi_{k}\in\partial_{P}V(y_{k})$
and $\alpha_{k}\downarrow0$ such that $y_{k}\rightarrow y$, $V(y_{k}%
)\rightarrow V(y),$ and $\alpha_{k}\xi_{k}\rightharpoonup\xi.$ Then, by the
current assumption, for each $k$ there exists $y_{k}^{\ast}\in Ay_{k}$ such
that
\[
\left\langle \xi_{k},f(y_{k})-y_{k}^{\ast}\right\rangle +aV(y_{k}%
)+W(y_{k})\leq\varepsilon.
\]
Because $\dim H<\infty$ and $y_{k},y_{k}^{\ast}$ are bounded, we may suppose
that $y_{k}^{\ast}$ converges to some $\upsilon \in Ay.$ Thus, multiplying the
equation above by $\alpha_{k}$ and next passing to the limit as $\varepsilon
\rightarrow0$  and finally  invoking the lsc of $V$ and the Lipschitz continuity
of $f$,  we obtain  that
\[
\left\langle \xi,f(y)-\upsilon \right\rangle \leq\lim_{k}\left\langle
\alpha_{k}\xi_{k},f(y_{k})-y_{k}^{\ast}\right\rangle +a\liminf_{k}\alpha
_{k}V(y_{k})\leq\lim_{k}\alpha_{k}\varepsilon=0.
\]
The conclusion follows.\smartqed 
\end{proof}

\begin{remark}
\emph{Let }$V\in\mathcal{F}(H),$\emph{ and let} $W\in\mathcal{F}%
(H;\overline{{\mathbb{R}}}_{+})$ \emph{be Lipschitz continuous on}
$\operatorname*{cl}(\dom A).$ \emph{Define the the mapping}
$\tilde{f}:H\times\mathbb{R}^{2}\mathbb{\rightarrow}H\times\mathbb{R}^{2}$
\emph{and the operator} $\widetilde{A}:H\times\mathbb{R}^{2}%
\mathbb{\rightrightarrows}H\times\mathbb{R}^{2}$ \emph{respectively as}
\[
\tilde{f}(y,\alpha,\gamma):=\left(
\begin{array}
[c]{c}%
f(y)\\
W(y)\\
0
\end{array}
\right)  \text{ \emph{and} }\widetilde{A}(y,\alpha,\gamma):=\left(
\begin{array}
[c]{c}%
Ay\\
0\\
0
\end{array}
\right)  ,
\]
\emph{and denote }$\widetilde{V}:H\times\mathbb{R\rightarrow}\overline
{\mathbb{R}}$ \emph{and} $\widehat{V}:H\times\mathbb{R}^{2}\mathbb{\rightarrow
}\overline{\mathbb{R}}$\emph{\ the functions given respectively as}
\[
\widetilde{V}(y,\alpha):=V(y)+\alpha\text{ \emph{and} }\widehat{V}%
(y,\alpha,\gamma):=\mathrm{I}_{\operatorname*{epi}\widetilde{V}\cap
\operatorname*{cl}(\dom \widetilde{A})}(y,\alpha,\gamma).
\]
\emph{Consider the differential inclusion}
\begin{equation}
\dot{z}(t; (y,\alpha,\gamma))\in\tilde{f}(z(t; (y,\alpha,\gamma)))-\widetilde{A}%
z(t; (y,\alpha,\gamma)),\text{ \ \ }z(0,(y,\alpha,\gamma))=(y,\alpha,\gamma),
\label{diaug}%
\end{equation}
\emph{the solution of which is the function }$z(t; (y,\alpha,\gamma
)):[0,\infty)\rightarrow H\times\mathbb{R}^{2}$ \emph{given by}
\[
z(t; (y,\alpha,\gamma))=\left(
\begin{array}
[c]{c}%
x(t; y)\\
\int_{0}^{t}W(x(\tau;y))d\tau+\alpha\\
\gamma
\end{array}
\right)  .
\]
\emph{Then, }$(V,W)$ \emph{is a Lyapunov pair for} \emph{(\ref{id}) if and
only if the function} $\mathrm{I}_{\operatorname*{epi}\widetilde{V}%
\cap\operatorname*{cl}(\dom \widetilde{A})}$ \emph{is a Lyapunov
function for this new differential inclusion (\ref{diaug}).}
\end{remark}


\section{Characterizations of finite-dimensional nonsmooth Lyapunov pairs}

This section is devoted to the finite-dimensional setting. Assuming that $\dim
H<\infty,$ we give multiple primal and dual characterizations for nonsmooth
$a$-Lyapunov pairs for the differential inclusion (\ref{id}), with respect
to the set $\operatorname*{rint}(\operatorname*{cl}(\dom A))$.
Naturally, these conditions turn out to be sufficient   for nonsmooth
$a$-Lyapunov functions  with respect to every given set $D\subset\operatorname*{cl}%
(\dom A)$ verifying condition (\ref{regular}).

Further, in this setting, the dual characterization does not depend on the
choice of the subdifferential operator which can be either the proximal, the
Fr\'echet, the Limiting (which coincides with the viscosity subdifferential (see
Borwein  \cite{BorweinZhuBook}), or, more generally, every subdifferential operator $\partial
V:H\rightrightarrows H$ satisfying
\begin{equation}
\partial_{P}V\subset\partial V\subset\partial_{L}V, \label{subdifoper}%
\end{equation}
where $V\in\mathcal{F}(H)$ is the first part of Lyapunov's condidate pairs.


\begin{proposition}
\label{heartlocalfinitedimoension}Assume that $\dim H<\infty.$ Let
$V\in\mathcal{F}(H)$, $W\in\mathcal{F}(H;\overline{{\mathbb{R}}}_{+}),$ and
$a\in{\mathbb{R}}_{+}$ be given, and let $\partial$ be as in
\emph{(\ref{subdifoper})}. Fix $\bar{y}\in\operatorname*{rint}%
(\operatorname*{cl}(\dom A))$ and let $\rho>0$ be such that
$B_{2\rho}(\bar{y})\cap\operatorname*{aff}(\operatorname*{cl}%
(\dom A))\subset\dom A.$ Then, the following
assertions \emph{(i)--(v)} are equivalent\emph{:}\newline

\emph{(i)} for every $y\in\dom A\cap\dom V\cap
B_{\rho}(\bar{y})$
\[
e^{at}V(x(t; y))+\int_{0}^{t}W(x(\tau; y))d\tau\leq V(y)\text{ \ \ for all
}t\geq0;
\]

\emph{(ii) }for every $y\in\dom A\cap\dom V\cap
B_{\rho}(\bar{y})$
\[
\sup_{\xi\in\partial_{P}V(y)}\left\langle \xi,f(y)-\pi_{Ay}(f(y))\right\rangle
+aV(y)+W(y)\leq0;
\]

\emph{(iii) }for every $y\in\dom A\cap\dom V\cap
B_{\rho}(\bar{y})$
\[
\sup_{\xi\in\partial V(y)}\inf_{\upsilon \in Ay}\left\langle \xi,f(y)-y^{\ast
}\right\rangle +aV(y)+W(y)\leq0;
\]

\emph{(iv) }for every $y\in\dom A\cap\dom V\cap
B_{\rho}(\bar{y})$
\[
V^{\prime}(y;f(y)-\pi_{Ay}(f(y)))+aV(y)+W(y)\leq0;
\]

\emph{(v) }for every $y\in\dom A\cap\dom V\cap
B_{\rho}(\bar{y})$
\[
\inf_{\upsilon \in Ay}V^{\prime}(y;f(y)-\upsilon)+aV(y)+W(y)\leq0.
\]
If $V$ is nonnegative, each one of the statements above is equivalent to

\emph{(vi) }for every $y\in\dom A\cap\dom V\cap
B_{\rho}(\bar{y})$
\[
V(x(t; y))+a\int_{0}^{t}V(x(\tau; y))d\tau+\int_{0}^{t}W(x(\tau; y))d\tau\leq
V(y)\text{ \ \ for all }t\geq0.
\]

\end{proposition}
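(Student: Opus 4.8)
The plan is to prove the five statements equivalent by running the cycle (i)$\Rightarrow$(iv)$\Rightarrow$(v)$\Rightarrow$(ii)$\Rightarrow$(i), then tying (iii) to (ii) and (vi) to (v); the trajectory inequality (i) serves as the hub and all the analytic work is imported from Section 3. The first step I would take is to reduce the relative-interior hypothesis to an interior one. Put $L:=\operatorname*{aff}(\operatorname*{cl}(\dom A))$; after a translation $L$ is a finite-dimensional subspace, the solution $x(\cdot;y)$ never leaves $\operatorname*{cl}(\dom A)\subset L$, and the hypotheses $\bar y\in\operatorname*{rint}(\operatorname*{cl}(\dom A))$ and $B_{2\rho}(\bar y)\cap L\subset\dom A$ become $\bar y\in\inte(\dom A)$ and $B_{2\rho}(\bar y)\subset\dom A$ \emph{relative to} $L$. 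On $L$ the operator $A$ is locally bounded on $\inte(\dom A)$, a strong solution exists, and Lemma \ref{lemarint} yields the right-derivative $\frac{d^{+}x}{dt}(0;y)=f(y)-\pi_{Ay}(f(y))$; this is exactly the data required by Proposition \ref{heartlocal} and Corollary \ref{cn2}.

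For the backbone, (i)$\Rightarrow$(iv) is obtained by dividing the inequality in (i) by $t$ and letting $t\downarrow0$: writing $w_{t}:=t^{-1}(x(t;y)-y)$, which tends to $v:=f(y)-\pi_{Ay}(f(y))$, and using $\frac{e^{-at}-1}{t}\to-a$ together with $\liminf_{t\downarrow0}t^{-1}\int_{0}^{t}W(x(\tau;y))\,d\tau\ge W(y)$ (lower semicontinuity of $W$ and $x(\tau;y)\to y$), one gets $V'(y;v)\le\liminf_{t\downarrow0}t^{-1}(V(x(t;y))-V(y))\le-aV(y)-W(y)$, which is (iv). The step (iv)$\Rightarrow$(v) is immediate since $\pi_{Ay}(f(y))\in Ay$. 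For (v)$\Rightarrow$(ii) I would use the elementary inequality $V'(y;u)\ge\langle\xi,u\rangle$, valid for every $\xi\in\partial_{P}V(y)$, so that after taking the infimum over $\upsilon\in Ay$ and the supremum over $\xi\in\partial_{P}V(y)$, statement (v) forces $\sup_{\xi\in\partial_{P}V(y)}\min_{\upsilon\in Ay}\langle\xi,f(y)-\upsilon\rangle+aV(y)+W(y)\le0$ (the minimum is attained because $Ay$ is compact); this is the left-hand side of condition (i) of Proposition \ref{heartlocal}, which that proposition identifies with (ii). Finally (ii)$\Rightarrow$(i) is Corollary \ref{cn2} applied on $L$, which globalises the infinitesimal condition to the inequality (i) for all $t\ge0$.

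It remains to insert (iii) and (vi). Because enlarging the subgradient set only enlarges the supremum, (iii)$\Rightarrow$(ii) is trivial; for (ii)$\Rightarrow$(iii) with any $\partial$ satisfying (\ref{subdifoper}) it suffices to reach the largest case $\partial=\partial_{L}V$, and here I would repeat the limiting argument from the proof of Corollary \ref{cn2}: for $\xi\in\partial_{L}V(y)$ choose $\xi_{k}\in\partial_{P}V(y_{k})$ with $y_{k}\to y$, $V(y_{k})\to V(y)$ and $\xi_{k}\to\xi$, apply (ii) at $y_{k}$, and pass to the limit using the outer semicontinuity and local boundedness of $A$ — which give $\limsup_{k}\sigma_{Ay_{k}}(\xi_{k})\le\sigma_{Ay}(\xi)$ for the support function $\sigma_{Ay}(\cdot)=\sup_{\upsilon\in Ay}\langle\cdot,\upsilon\rangle$ — together with the lower semicontinuity of $W$. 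For (v)$\Leftrightarrow$(vi) under $V\ge0$ I would apply the already-established equivalence (i)$\Leftrightarrow$(v) to the transformed pair $(V,aV+W)$ with weight $0$: nonnegativity of $V$ guarantees $aV+W\in\mathcal{F}(H;\overline{{\mathbb{R}}}_{+})$, condition (i) for this pair is precisely (vi), while condition (v) for it reads $\inf_{\upsilon\in Ay}V'(y;f(y)-\upsilon)+aV(y)+W(y)\le0$, i.e. (v) for $(V,W)$.

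The main obstacle I anticipate is making the reduction to $L$ fully rigorous: one has to verify that restricting $A$, $f$ and the subdifferentials of $V$ to the subspace $L$ does not alter the quantities appearing in (ii)--(v), in particular that $\pi_{Ay}(f(y))$ and the proximal and limiting subgradients of $V$ computed in $H$ project consistently onto $L$, so that the interior-based results of Section 3 transfer without loss. Once this compatibility is secured, every remaining step is either a direct citation or a routine limiting argument.
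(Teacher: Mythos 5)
Your overall architecture is the paper's own, modulo a permutation of the cycle: the same reduction to $L=\operatorname*{aff}(\operatorname*{cl}(\dom A))$ with $A_0y=Ay\cap L$ and $f_0=\pi_{L}\circ f$ (the paper's $H_0$, using $Ay+H_0^{\perp}=Ay$), the same difference-quotient argument for (i)$\Rightarrow$(iv) based on $\frac{d^{+}x}{dt}(0;y)=f(y)-\pi_{Ay}(f(y))$, the same globalisation of the pointwise proximal condition through Corollary \ref{cn2}, the same limiting argument (graph closedness plus local boundedness of $A_0$) to reach $\partial_L$, and the identical treatment of (vi) as statement (i) for the pair $(V,aV+W)$ with weight $0$. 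Your flagged ``main obstacle'' --- that subgradients of $V$ computed in $H$ at points off $L$ are not controlled by hypotheses quantified over $\dom A$ --- is real, but the paper's proof glosses it in exactly the same way (e.g.\ in asserting inequality (\ref{tb}) and in taking the approximating sequence $(y_n)_n\subset H_0$ in the step (v)$\Rightarrow$(iii)), so you are at parity with the paper there.

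There is, however, one concrete misstep: your claim that ``(iii)$\Rightarrow$(ii) is trivial because enlarging the subgradient set only enlarges the supremum.'' That principle only gives (iii) with a larger $\partial$ implies (iii) with $\partial_P$; it says nothing about replacing $\inf_{\upsilon\in Ay}\langle\xi,f(y)-\upsilon\rangle$ by the evaluation at the specific point $\pi_{Ay}(f(y))$, and since $\langle\xi,f(y)-\pi_{Ay}(f(y))\rangle\geq\inf_{\upsilon\in Ay}\langle\xi,f(y)-\upsilon\rangle$, the implication (iii)$\Rightarrow$(ii) is the \emph{hard} direction: in the paper it is obtained only by passing through the dynamics, via (iii)$\Rightarrow$(i)$\Rightarrow$(iv) and then the epigraph tangent/normal polarity argument for (iv)$\Rightarrow$(ii). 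As written, your scheme therefore never gets from hypothesis (iii) back into the cycle. The repair is immediate with tools you already invoke: (iii) with any $\partial$ satisfying (\ref{subdifoper}) contains (iii) with $\partial_P$, which after the reduction is precisely the hypothesis of Corollary \ref{cn2}, hence (iii)$\Rightarrow$(i) directly --- exactly how the paper opens its proof; alternatively it is the equivalence (i)$\Leftrightarrow$(ii) of Proposition \ref{heartlocal} that you already use for (v)$\Rightarrow$(ii). A second, minor slip: before the reduction $Ay$ is \emph{not} compact when $L\neq H$ (it equals $A_0y+L^{\perp}$), so your ``minimum attained because $Ay$ is compact'' is only valid after passing to $A_0$ on $L$; an infimum suffices anyway.
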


\begin{proof}
\textbf{(iii with }$\partial\equiv\partial_{P}$\textbf{)}$\implies
$\textbf{(i)}: Let $H_{0}:=\operatorname{lin}(\operatorname*{cl}%
(\dom A))$ denote the linear hull of $\dom A;$ we
may suppose that $\theta\in\dom A.$ Let $A_{0}:H_{0}%
\rightrightarrows H_{0}$ be the operator given by
\begin{equation}
A_{0}y=Ay\cap H_{0}, \label{az}%
\end{equation}
and define the Lipschitz continuous mapping $f_{0}:H_{0}\rightarrow H_{0}$ as
\begin{equation}
f_{0}(y)=\pi_{H_{0}}(f(y)), \label{fzz}%
\end{equation}
where $\pi_{H_{0}}$ denotes the orthogonal projection onto $H_{0}.$ According to  the Minty
Theorem,  it follows that  $A_{0}$ is also a maximally monotone operator. Further,
 for every $y\in\dom A$  we have  $Ay+\mathrm{N}%
_{\operatorname*{cl}(\dom A)}(y)=Ay,$  and therefore 
$Ay+H_{0}^{\perp}=Ay$.  Hence,
\begin{equation}
Ay=\left(  Ay\cap H_{0}\right)  +H_{0}^{\perp}=A_{0}y+H_{0}^{\perp}.
\label{boon}%
\end{equation}
From this inequality we deduce that $\dom A_{0}%
=\dom A$ and, so,
\[
\operatorname*{rint}(\operatorname*{cl}(\dom %
A))=\inte (\operatorname*{cl}(\dom A_{0}%
))=\inte (\dom A_{0});
\]
for the last equality see, e.g., \cite[Remark 2.1- Page 33]{BrezisBook}. Further,
since for $y\in\operatorname*{cl}(\dom A)$ we have that
\[
f_{0}(y)-A_{0}y\subset f(y)-A_{0}y+H_{0}^{\perp}=f(y)-Ay,
\]
 from which it   follows that $x\cdot; y)$ is the unique solution of the the differential
inclusion
\[
\dot{x}(t; y)\in f_{0}(x(t; y))-A_{0}x(t; y),\text{ }x(0,y)=y.
\]

Next,  we are going to show the assumption of Corollary \ref{cn2} (which is the
same as Conditions (i) of Theorem \ref{heartlocalfinite}) holds with respect
to the pair $(A_{0},f_{0}).$ Fix $y\in\dom A\cap
\dom V\cap B_{\rho}(\bar{y})$ and $\xi\in\partial V(y)$ (if
any). For fixed $\varepsilon>0,$ by assumption  take  $\upsilon \in Ay$  in such a way 
that
\[
\left\langle \xi,f(y)-\upsilon \right\rangle +aV(y)+W(y)\leq\varepsilon.
\]
Since $f(y)\in f_{0}(y)+H_{0}^{\perp}$ and $\upsilon +H_{0}^{\perp}\in
Ay+H_{0}^{\perp}=A_{0}y,$ we have
\begin{equation}
\inf_{\upsilon \in A_{0}y}\left\langle \xi,f_{0}(y)-\upsilon \right\rangle
\leq\inf_{\upsilon \in Ay}\left\langle \xi,f(y)-\upsilon \right\rangle
\leq\varepsilon-aV(y)-W(y), \label{tb}%
\end{equation}
and  the assumption of Corollary \ref{cn2} follows as $\varepsilon
\rightarrow0.$

\textbf{(i)}$\implies$\textbf{(iv)}: Fix $y\in\dom %
A\cap\dom V\cap B_{\rho}(\bar{y}).$ Then, as shown in the
paragraph above, the solution $x(t; y)$ of (\ref{id}) is also the unique
strong solution of the equation
\[
\dot{x}(t; y)\in f_{0}(x(t; y))-A_{0}x(t; y),\text{ \ }x(0; y)=y\in
\operatorname*{cl}\left(  \dom A\right) ,
\]
where $A_{0}$ and $f_{0}$ are defined in (\ref{az}) and (\ref{fzz}),
respectively. Let $(t_n)_{n\in\N}\subset(0,T)$ be such that $t_{n}\rightarrow
0^{+}$ and set
\[
w_{n}:=\frac{x(t_{n}; y)-y}{t_{n}}.
\]
Because $x(\cdot; y)$ is derivable from the right at $0$ ($y\in
\dom A$) and 
$$\displaystyle \frac{d^{+}x(\cdot; y)}{dt}(0)=(f(y)-Ay)^{\circ
}=f(y)-\pi_{Ay}(f(y)), $$
we infer that
\[
w_{n}\rightarrow f(y)-\pi_{Ay}(f(y)).
\]
Therefore, using the current assumption (i),
\begin{align*}
\frac{V(y+t_{n}w_{n})-V(y)}{t_{n}}\\&=\frac{V(x(t_{n},y))-V(y)}{t_{n}}\\&\leq
\frac{e^{-at_{n}}(1-e^{at_{n}})}{t_{n}}V(y)-\frac{e^{-at_{n}}}{t_{n}}\int%
_{0}^{t_{n}}W(x(s; y))ds,
\end{align*}
and  taking limits  yields
\begin{align*}
V^{\prime}(y;f(y)-\pi_{Ay}(f(y)))  &  \leq\liminf_{n}\frac{e^{-at_{n}%
}(1-e^{at_{n}})}{t_{n}}V(y)-\frac{e^{-at_{n}}}{t_{n}}\int_{0}^{t_{n}%
}W(x(s; y))ds\\
&  =-aV(y)-W(y);
\end{align*}
this proves (iv).

(iv)$\implies$(v) is trivial.

(v) $\implies$ (iii).  Use $\partial\equiv\partial_{L}.%
$\textbf{)}: Take $y\in\dom A\cap\dom V\cap
B_{\rho}(\bar{y}).\ $For fixed $\varepsilon>0,$ by (v) we let $\upsilon \in Ay$
be such that
\[
V^{\prime}(y;f(y)-\upsilon )\leq\varepsilon-aV(y)-W(y);
\]
that is
\[
(f(y)-\upsilon ,\varepsilon-aV(y)-W(y))\in\operatorname*{epi}V^{\prime}%
(y,\cdot)=\mathrm{T}_{\operatorname*{epi}V}(y,V(y))\subset\left[
\mathrm{N}_{\operatorname*{epi}V}^{p}(y,V(y))\right]  ^{\circ}.
\]
If $\xi\in\partial_{P}V(y)$, since that $(\xi,-1)\in\mathrm{N}%
_{\operatorname*{epi}V}^{p}(y,V(y))$ the last  above inequality  leads us to
\begin{align*}
\left\langle \xi,f(y)-\upsilon \right\rangle  &  \leq\left\langle
(\xi,-1),(f(y)-\upsilon ,\varepsilon-aV(y)-W(y))\right\rangle +\varepsilon
-aV(y)-W(y)\\
&  \leq\varepsilon-aV(y)-W(y)
\end{align*}
so that (ii) follows when $\varepsilon\rightarrow0.$

If $\xi\in\partial_{L}V(y)$, then there are sequences $y_{n}\rightarrow
y,\xi_{n}\rightarrow\xi$ such that $V(\xi_{n})\rightarrow V(\xi)$ and $\xi
_{n}\in V(y_{n})$ for every integer  $n$ sufficiently large. As just shown above, given an $\varepsilon
>0,$ for each $n$ there exists $y_{n}^{\ast}\in Ay_{n}$ such that
\[
\left\langle \xi_{n},f(y_{n})-y_{n}^{\ast}\right\rangle \leq\varepsilon
-aV(y_{n})-W(y_{n}).
\]
Because $(y_{n})_{n}\subset B_{\rho}(\bar{y})\subset\inte %
(\dom A_{0})\subset H_{0}$ (the ball $B_{\rho}(\bar{y})$ is with
respect to $H_{0}$), then we may suppose that $y_{n}^{\ast}\rightarrow
\upsilon \in Ay.$ Thus,  passing to the limit in the above inequality, and
taking into account the lsc of $V$ and the continuity of $W,$
\[
\left\langle \xi,f(y)-\upsilon \right\rangle \leq\varepsilon-aV(y)-W(y).
\]
showing that (iii) holds with $\partial\equiv\partial_{L}.$

At this point we have proved that (i)$\Longleftrightarrow$(iii with
$\partial\equiv\partial_{L}$)$\Longleftrightarrow$(iv)$\Longleftrightarrow
$(v). To see that (ii) is also equivalent to the other statements we observe
that, from one hand, (ii)$\implies$(iii) holds obviously. On the other hand,
the implication (iv)$\implies$(ii) follows in a similar way as in the proof of
the statement (v)$\implies$(iii). This finishes the proof of the equivalences
of (i) through (v).

Finally, if $V$ is nonnegative, (vi) is nothing else but (i) with $a$ and $W$
replaced by $\theta$ and $aV+W,$ respectively. Thus, (vi) is equivalent to (iii).\smartqed 
  \end{proof}
The following Theorem, which is an immediate consequence of Proposition
\ref{subdifoper}\emph{,} establishes  primal and dual
characterizations of Lyapunov pairs for (\ref{id}) with respect to
$\operatorname*{rint}(\operatorname*{cl}(\dom A))$. Sufficient
conditions for Lyapunov pairs with respect to other sets are then deduced
under (\ref{regular}).

\begin{theorem}
\label{lyapfinite}Assume that $\dim H<\infty.$ Let $V\in\mathcal{F}%
(H),W\in\mathcal{F}(H;\overline{\mathbb{R}}_{+}),$ and $a\in\mathbb{R}_{+}$ be
given, and let $\partial$ be as in \emph{(\ref{subdifoper})}. Then, $(V,W)$
forms an $a$-Lyapunov pair for \emph{(\ref{id}),} with respect to
$\operatorname*{rint}(\operatorname*{cl}(\dom A)),$ if and only
if one of the following assertions holds\emph{:}\newline

\emph{(i) }for all $y\in\operatorname*{rint}(\operatorname*{cl}%
(\dom A))\cap\dom V$
\[
\sup_{\xi\in\partial_{P}V(y)}\left\langle \xi,f(y)-\pi_{Ay}(f(y))\right\rangle
+aV(y)+W(y)\leq0;
\]

\quad\newline

\emph{(ii) }for all $y\in\operatorname*{rint}(\operatorname*{cl}%
(\dom A))\cap\dom V$
\[
\sup_{\xi\in\partial V(y)}\inf_{\upsilon \in Ay}\left\langle \xi,f(y)-y^{\ast
}\right\rangle +aV(y)+W(y)\leq0;
\]

\quad\newline

\emph{(iii) }for all $y\in\operatorname*{rint}(\operatorname*{cl}%
(\dom A))\cap\dom V$%
\[
V^{\prime}(y;f(y)-\pi_{Ay}(f(y)))+aV(y)+W(y)\leq0;
\]

\quad\newline

\emph{(iv) }for all $y\in\operatorname*{rint}(\operatorname*{cl}%
(\dom A))\cap\dom V$
\[
\inf_{\upsilon \in Ay}V^{\prime}(y;f(y)-\upsilon )+aV(y)+W(y)\leq0.
\]
Consequently, if $V$ satisfies \emph{(\ref{regular})} for a given set
$D\subset\operatorname*{cl}(\dom A),$ then any of the conditions
\emph{(i)-(iv)} above implies that $(V,W)$ is an $a$-Lyapunov pair for
\emph{(\ref{id})} with respect to $D.$
\end{theorem}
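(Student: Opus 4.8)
The plan is to read off the entire theorem from Proposition~\ref{heartlocalfinitedimoension} by a localization/covering argument, and then to add one genuine limiting step for the final statement about $D$. First I would record the two facts that make the localization work in finite dimensions: since $A$ is maximally monotone one has $\operatorname*{rint}(\operatorname*{cl}(\Dom A))\subset\Dom A$ (this is exactly the identification $\operatorname*{rint}(\operatorname*{cl}(\Dom A))=\inte(\Dom A_{0})$ used inside the proof of the proposition), and the relative interior of the convex set $\operatorname*{cl}(\Dom A)$ is relatively open. Hence, for every $\bar y\in\operatorname*{rint}(\operatorname*{cl}(\Dom A))$ there is $\rho>0$ with $B_{2\rho}(\bar y)\cap\operatorname*{aff}(\operatorname*{cl}(\Dom A))\subset\Dom A$, and after shrinking $\rho$ one even has $\Dom A\cap B_{\rho}(\bar y)=B_{\rho}(\bar y)\cap\operatorname*{aff}(\operatorname*{cl}(\Dom A))\subset\operatorname*{rint}(\operatorname*{cl}(\Dom A))$. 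Consequently the balls $\{B_{\rho}(\bar y)\}_{\bar y}$ cover $\operatorname*{rint}(\operatorname*{cl}(\Dom A))$ and
$$\bigcup_{\bar y}\big(\Dom A\cap\Dom V\cap B_{\rho}(\bar y)\big)=\operatorname*{rint}(\operatorname*{cl}(\Dom A))\cap\Dom V.$$

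Next I would match the five statements. For each fixed $\bar y$, Proposition~\ref{heartlocalfinitedimoension} asserts the equivalence of its local conditions (i)--(v). The pointwise inequalities (i), (ii), (iii), (iv) of the present theorem coincide with the local statements (ii), (iii), (iv), (v) of the proposition, while the Lyapunov-pair property with respect to $\operatorname*{rint}(\operatorname*{cl}(\Dom A))$ is precisely statement (i) of the proposition read over the covering. Since none of the pointwise inequalities depends on $\bar y$ or $\rho$, the displayed union identity shows that the four global conditions and the Lyapunov-pair property are pairwise equivalent; the only bookkeeping point is that a point $y\notin\Dom V$ contributes a trivial inequality (as $V(y)=+\infty$), so restricting the $y$-range to $\Dom V$ is harmless. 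This gives the main equivalence, and it is indeed immediate from the proposition.

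It then remains to prove the consequence. I would fix $y\in D$ and assume $y\in\Dom V$, the case $V(y)=+\infty$ being trivial. Using the regularity hypothesis~(\ref{regular}) I would select a recovery sequence $z_{n}\in\operatorname*{rint}(\operatorname*{cl}(\Dom A))$ with $z_{n}\to y$ and $V(z_{n})\to V(y)$, which exists because $\operatorname*{rint}(\operatorname*{cl}(\Dom A))$ is dense in $\operatorname*{cl}(\Dom A)\supset D$ and (\ref{regular}) forces recovery of the value $V(y)$ along the domain. Each $z_{n}$ lies in $\operatorname*{rint}(\operatorname*{cl}(\Dom A))$, so the already-proved inequality applies:
$$e^{at}V(x(t;z_{n}))+\int_{0}^{t}W(x(\tau;z_{n}))\,d\tau\leq V(z_{n})\quad\text{for all }t\geq0.$$
By the contraction estimate $\Vert x(\tau;z_{n})-x(\tau;y)\Vert\leq e^{L_{f}\tau}\Vert z_{n}-y\Vert$ the trajectories converge uniformly on $[0,t]$; lower semicontinuity of $V$ gives $V(x(t;y))\leq\liminf_{n}V(x(t;z_{n}))$, and Fatou's lemma together with the lower semicontinuity of the nonnegative $W$ gives $\int_{0}^{t}W(x(\tau;y))\,d\tau\leq\liminf_{n}\int_{0}^{t}W(x(\tau;z_{n}))\,d\tau$. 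Adding these and using $\liminf_{n}a_{n}+\liminf_{n}b_{n}\leq\liminf_{n}(a_{n}+b_{n})$ yields
$$e^{at}V(x(t;y))+\int_{0}^{t}W(x(\tau;y))\,d\tau\leq\liminf_{n}V(z_{n})=V(y),$$
which is the Lyapunov inequality at $y\in D$.

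The main obstacle I anticipate is exactly this last step: ensuring that the recovery sequence can be taken \emph{inside} $\operatorname*{rint}(\operatorname*{cl}(\Dom A))$, where the inequality is available, rather than merely in $\Dom A$. This is where~(\ref{regular}) is used in an essential way, since lower semicontinuity alone only gives $\liminf_{n}V(z_{n})\geq V(y)$, which is the wrong direction for the argument. The passage to the limit in the running cost $\int_{0}^{t}W\,d\tau$ is then routine via Fatou once the uniform convergence of the trajectories is in hand, and everything else is a mechanical transcription of Proposition~\ref{heartlocalfinitedimoension}.
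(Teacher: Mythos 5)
Your handling of the four-way equivalence is correct and is exactly the paper's route: the paper gives no separate argument, declaring the theorem an immediate consequence of Proposition \ref{heartlocalfinitedimoension}, and your covering of $\operatorname*{rint}(\operatorname*{cl}(\dom A))$ by balls $B_{\rho}(\bar{y})$ with $B_{2\rho}(\bar{y})\cap\operatorname*{aff}(\operatorname*{cl}(\dom A))\subset\dom A$, the matching of the theorem's (i)--(iv) with the proposition's (ii)--(v) and of the Lyapunov property with the proposition's (i), and the remark that points outside $\dom V$ contribute trivial inequalities, is precisely the bookkeeping that makes ``immediate'' honest. (A small simplification: no shrinking of $\rho$ is needed, since $\dom A\subset\operatorname*{aff}(\operatorname*{cl}(\dom A))$ already gives $\dom A\cap B_{\rho}(\bar{y})=B_{\rho}(\bar{y})\cap\operatorname*{aff}(\operatorname*{cl}(\dom A))\subset\operatorname*{rint}(\operatorname*{cl}(\dom A))$.)

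The genuine gap is in the ``Consequently'' clause, at exactly the point you flagged, and your proposed resolution does not close it. Condition (\ref{regular}) supplies $z_{n}\in\dom A$ with $z_{n}\rightarrow y$ and $V(z_{n})\rightarrow V(y)$, but in finite dimensions $\dom A$ may strictly contain $\operatorname*{rint}(\operatorname*{cl}(\dom A))$ --- it can include relative-boundary points --- and since $V$ is merely lsc, density of the relative interior does not let you move the $z_{n}$ into it while keeping the values: $V$ may be identically $+\infty$ on the relative interior near $y$. Concretely, take $H=\mathbb{R}^{2}$, $A=\mathrm{N}_{C}$ with $C$ the closed upper half-plane, $f\equiv(1,0)$, $a=0$, $W\equiv0$, and $V=\mathrm{I}_{S}$ with $S=\{(s,0)\mid s\leq0\}$. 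Then $\operatorname*{rint}(\operatorname*{cl}(\dom A))\cap\dom V=\emptyset$, so (i)--(iv) hold vacuously and $(V,W)$ is a Lyapunov pair with respect to the relative interior; moreover (\ref{regular}) holds as written at every $y\in\dom V$ (approach within $S\subset\dom A=C$); yet from $y=\theta$ the strong solution is $x(t;y)=(t,0)$, which leaves $S$ instantly, so the Lyapunov inequality fails at $y$. Thus a recovery sequence inside $\operatorname*{rint}(\operatorname*{cl}(\dom A))$ cannot in general be extracted from (\ref{regular}), and your appeal to ``density plus (\ref{regular})'' is not repairable: the final clause is only valid under the stronger reading $\liminf_{\operatorname*{rint}(\operatorname*{cl}(\dom A))\ni z\rightarrow y}V(z)=V(y)$ for $y\in D\cap\dom V$ (satisfied, for instance, when $V$ is convex with $\inte(\dom V)\subset\dom A$, as noted after Proposition \ref{cont}). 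Under that reading, the rest of your limiting step --- the contraction estimate giving uniform convergence of trajectories, lower semicontinuity of $V$, Fatou for the nonnegative lsc $W$, and superadditivity of $\liminf$ --- is complete and correct. Be aware that the paper is itself elliptical here: it cites only (\ref{regular}) and the spirit of Proposition \ref{cont}, which compares $\dom A$ with $\operatorname*{cl}(\dom A)$ rather than with the relative interior, so the obstruction you encountered sits in the statement's hypotheses as much as in your argument.
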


In contrast to the (analytic) Definition \ref{deflyap}, Lyapunov stability can
also be approached from a geometrical point of view using the concept of invariance:


\begin{definition}
\label{invariance} Let be given a set $D\subset\operatorname*{cl}%
(\dom A).$ A non-empty closed set $S\subset H$ is said invariant
for \emph{(\ref{id})} with respect to $D$ if for all $y\in S\cap D$ one has
that
\[
x(t; y)\in S\text{ \ for all }t\geq0.
\]

\end{definition}

This fact, which was already mentioned in the \textbf{infinite-dimensional} setting in
Corollary \ref{cn1}, is explicitly characterized here in the
\textbf{finite-dimensional }setting. This characterization is also valid in the
infinite-dimensional setting provided that $S\cap\operatorname*{cl}%
(\dom A)$ is a convex set,  according  to Remark \ref{corcon}
and Corollary \ref{cn1}.


\begin{corollary}
Assume that $\dim H<\infty.$ A closed set $\emptyset\neq S\subset H$ is
invariant for \emph{(\ref{id}),} with respect to $\operatorname*{rint}%
(\operatorname*{cl}(\dom A)),$ if and only if one of the
following assertions are  satisfied\emph{:}\newline

\emph{(i) }for all $y\in\operatorname*{rint}(\operatorname*{cl}%
(\dom A))\cap S$
\[
\sup_{\xi\in\mathrm{N}_{S\cap\operatorname*{cl}(\dom A)}^{P}%
(y)}\left\langle \xi,f(y)-\pi_{Ay}(f(y))\right\rangle \leq0;
\]

\emph{(ii) }for all $y\in\operatorname*{rint}(\operatorname*{cl}%
(\dom A))\cap S$
\[
\sup_{\xi\in\mathrm{N}_{S\cap\operatorname*{cl}(\dom A)}^{P}%
(y)}\inf_{\upsilon \in Ay}\left\langle \xi,f(y)-\upsilon \right\rangle \leq0;
\]

\emph{(iii) }for all $y\in\operatorname*{rint}(\operatorname*{cl}%
(\dom A))\cap S$
\[
f(y)-\pi_{Ay}(f(y))\in T_{S\cap\operatorname*{cl}(\dom A)}(y);
\]

\emph{(iv) }for all $y\in\operatorname*{rint}(\operatorname*{cl}%
(\dom A))\cap S$
\[
\left[  f(y)-Ay\right]  \cap T_{S\cap\operatorname*{cl}(\dom %
A)}(y)\neq\emptyset;
\]

\emph{(v) }for all $y\in\operatorname*{rint}(\operatorname*{cl}%
(\dom A))\cap S$
\[
\left[  f(y)-Ay\right]  \cap\overline{\operatorname*{co}}\left[
T_{S\cap\operatorname*{cl}(\dom A)}(y)\right]  \neq\emptyset.
\]
Consequently, $S$ is invariant for \emph{(\ref{id})} with respect to a given
set $D\subset\operatorname*{cl}(\dom A)$ if
\[
S\cap D\subset\operatorname*{cl}(S\cap\operatorname*{rint}(\operatorname*{cl}%
(\dom A))).
\]

\end{corollary}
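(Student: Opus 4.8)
The plan is to realize the invariance of $S$ as a Lyapunov-pair property and then read off the conclusion from Theorem \ref{lyapfinite}, taking as candidate function the indicator of $C:=S\cap\operatorname*{cl}(\dom A)$. I set $V:=\mathrm{I}_{C}$, $W\equiv0$ and $a=0$. Since $S$ and $\operatorname*{cl}(\dom A)$ are closed, $V$ is lsc, and we may assume $C\neq\emptyset$ (otherwise the invariance property and each of (i)--(v) hold vacuously), so that $V\in\mathcal{F}(H)$. The first step is the equivalence: $S$ is invariant for (\ref{id}) with respect to $\operatorname*{rint}(\operatorname*{cl}(\dom A))$ if and only if $(V,0)$ is a $0$-Lyapunov pair with respect to $\operatorname*{rint}(\operatorname*{cl}(\dom A))$ in the sense of Definition \ref{deflyap}. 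Indeed, for $y\in\operatorname*{rint}(\operatorname*{cl}(\dom A))\cap S$ one has $V(y)=0$, so $V(x(t;y))\leq V(y)$ means $x(t;y)\in C$; as every solution already satisfies $x(t;y)\in\operatorname*{cl}(\dom A)$, this is exactly the requirement $x(t;y)\in S$, and the two properties coincide.

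The second step is to specialize Theorem \ref{lyapfinite} to $V=\mathrm{I}_{C}$. Here $\partial_{P}V(y)=\mathrm{N}^{P}_{C}(y)$ (and more generally $\partial V(y)$ is the proximal, Fr\'echet or limiting normal cone of $C$ at $y$, according to the choice of $\partial$ in (\ref{subdifoper})), while $\operatorname*{epi}V'(y,\cdot)=T_{\operatorname*{epi}V}(y,0)=T_{C}(y)\times\mathbb{R}_{+}$ yields $V'(y;v)=0$ for $v\in T_{C}(y)$ and $V'(y;v)=+\infty$ otherwise. Substituting these data (with $a=W=0$, and taking $\partial=\partial_{P}$) into assertions (i)--(iv) of Theorem \ref{lyapfinite} reproduces verbatim the assertions (i)--(iv) of the present corollary; in particular $V'(y;f(y)-\pi_{Ay}(f(y)))\leq0$ becomes $f(y)-\pi_{Ay}(f(y))\in T_{C}(y)$, and $\inf_{\upsilon\in Ay}V'(y;f(y)-\upsilon)\leq0$ becomes $[f(y)-Ay]\cap T_{C}(y)\neq\emptyset$. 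Hence invariance is equivalent to each of (i)--(iv).

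It remains to attach assertion (v), which replaces $T_{C}(y)$ by its closed convex hull; this is the only genuinely new point. The implication (iv) $\Rightarrow$ (v) is immediate from $T_{C}(y)\subset\overline{\operatorname*{co}}\,T_{C}(y)$, and I expect the converse to be the main obstacle. I would handle it dually, proving (v) $\Rightarrow$ (ii). The elementary fact needed is that every proximal normal annihilates the contingent cone from above: if $\xi\in\mathrm{N}^{P}_{C}(y)$, with defining constants $\sigma\geq0$, $\rho>0$, and $v\in T_{C}(y)$ is realized by $y+\tau_{k}v_{k}\in C$ with $v_{k}\to v$, $\tau_{k}\to0^{+}$, then $\langle\xi,v_{k}\rangle\leq\sigma\tau_{k}\|v_{k}\|^{2}$ for large $k$, so $\langle\xi,v\rangle\leq0$; by linearity and continuity $\langle\xi,\cdot\rangle\leq0$ on all of $\overline{\operatorname*{co}}\,T_{C}(y)$. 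Now pick $w_{0}\in[f(y)-Ay]\cap\overline{\operatorname*{co}}\,T_{C}(y)$ and write $w_{0}=f(y)-\upsilon_{0}$ with $\upsilon_{0}\in Ay$; then $\langle\xi,f(y)-\upsilon_{0}\rangle\leq0$ for every $\xi\in\mathrm{N}^{P}_{C}(y)$, whence $\inf_{\upsilon\in Ay}\langle\xi,f(y)-\upsilon\rangle\leq0$, which is precisely (ii). Thus (v) joins the chain of equivalences.

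For the concluding implication I would combine closedness of $S$ with the Lipschitz estimate $\|x(t;y_{n})-x(t;y)\|\leq e^{L_{f}t}\|y_{n}-y\|$. Assuming $S\cap D\subset\operatorname*{cl}(S\cap\operatorname*{rint}(\operatorname*{cl}(\dom A)))$, fix $y\in S\cap D$ and choose $y_{n}\in S\cap\operatorname*{rint}(\operatorname*{cl}(\dom A))$ with $y_{n}\to y$; invariance with respect to $\operatorname*{rint}(\operatorname*{cl}(\dom A))$ gives $x(t;y_{n})\in S$ for all $t\geq0$, and letting $n\to\infty$ the estimate forces $x(t;y_{n})\to x(t;y)$, so $x(t;y)\in S$ because $S$ is closed. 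This establishes invariance of $S$ with respect to $D$.
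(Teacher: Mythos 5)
Your proposal is correct, and its overall architecture coincides with the paper's: reduce invariance to a Lyapunov-pair statement for an indicator function, read off the equivalences from the finite-dimensional characterization theorem, and settle (v) by showing (v)$\implies$(ii) through the nonpositivity of proximal normals on $\overline{\operatorname*{co}}\,T_{S\cap\operatorname*{cl}(\dom A)}(y)$ (the paper phrases this as the polar inclusion $\overline{\operatorname*{co}}\left[T_{S\cap\operatorname*{cl}(\dom A)}(y)\right]\subset\left[\mathrm{N}^{P}_{S\cap\operatorname*{cl}(\dom A)}(y)\right]^{\circ}$, you prove the same fact by hand along tangent sequences). The one genuine divergence is in how (iii) and (iv) are obtained: the paper switches to a second Lyapunov function, the distance $d(\cdot,S\cap\operatorname*{cl}(\dom A))$, and invokes the identity $T_{S\cap\operatorname*{cl}(\dom A)}(y)=\{w\mid d^{\prime}(\cdot,S\cap\operatorname*{cl}(\dom A))(y;w)=0\}$, whereas you stay with the single function $V=\mathrm{I}_{S\cap\operatorname*{cl}(\dom A)}$ and compute its Dini derivative directly, $V^{\prime}(y;v)=0$ if $v\in T_{S\cap\operatorname*{cl}(\dom A)}(y)$ and $+\infty$ otherwise, via $\operatorname*{epi}V^{\prime}(y,\cdot)=\mathrm{T}_{\operatorname*{epi}V}(y,V(y))$. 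Your route is slightly leaner (one candidate function instead of two, no appeal to properties of the distance function), at the cost of having to verify the product formula $\mathrm{T}_{C\times\mathbb{R}_{+}}(y,0)=T_{C}(y)\times\mathbb{R}_{+}$, which holds here because tangents to $\mathbb{R}_{+}$ at $0$ are realized along every sequence $\tau_{k}\rightarrow0^{+}$; the paper's distance-function device buys a statement that remains meaningful for Lipschitz (rather than extended-real-valued) data. Finally, you supply an explicit proof of the concluding implication about general $D$ (closedness of $S$ plus the estimate $\left\Vert x(t;y_{n})-x(t;y)\right\Vert\leq e^{L_{f}t}\left\Vert y_{n}-y\right\Vert$), which the paper leaves unproved; your argument is the intended one and is correct.
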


\begin{proof}
It is an immediate fact that, with respect to $\operatorname*{rint}%
(\operatorname*{cl}(\dom A)),$ $S$ is invariant if and only if
$\mathrm{I}_{S\cap\operatorname*{cl}(\dom A)}$ is a Lyapunov
function. Then, the current assertions  (i) and (ii) come from statements (i) and (ii) of
Proposition \ref{heartlocalfinitedimoension}, respectively. Similarly, always
with respect to $\operatorname*{rint}(\operatorname*{cl}(\dom %
A)),$ $S$ is invariant if and only $d(\cdot,S\cap\operatorname*{cl}%
(\dom A))$ is a Lyapunov function. Thus,    by virtue of the
relationship
\[
T_{S\cap\operatorname*{cl}(\dom A)}(y)=\{w\in H\mid d^{\prime
}(\cdot,S\cap\operatorname*{cl}(\dom A)(w)=0\},
\]
the current  assertions (iii) and (iv) follow from statements (iii) and (iv) of
Proposition \ref{heartlocalfinitedimoension}, respectively. This shows that
(i)$\Longleftrightarrow$(ii)$\Longleftrightarrow$(iii)$\Longleftrightarrow$(iv).

It remains to show that (v) is equivalent to the other statements. We
obviously have that (iv)$\implies$(v) and so (i)$\implies$(v). To prove the
reverse implication it suffices to show that (v)$\implies$(ii). Indeed, fix
$y\in S\cap\dom A$ and $\xi\in\mathrm{N}_{S\cap
\operatorname*{cl}(\dom A)}^{P}.$ Then, by (v) there exists
$\upsilon \in$ $Ay$ such that
\[
f(y)-\upsilon \in\overline{\operatorname*{co}}\left[  T_{S\cap
\operatorname*{cl}(\dom A)}(y)\right]  \subset\left[
\mathrm{N}_{S\cap\operatorname*{cl}(\dom A)}^{P}\right]
^{\circ}.
\]
Therefore, $\left\langle \xi,f(y)-\upsilon \right\rangle \leq0;$ that is (ii) follows.\smartqed 
  \end{proof}
\vskip 2mm
The following corollary follows from Theorem \ref{lyapfinite}.

\begin{corollary}
\label{convcont}Assume that $\dim H<\infty.$ Let $V\in\mathcal{F}(H)$,
$W\in\mathcal{F}(H,\mathbb{R}_{+}),$ and $a\in{\mathbb{R}}_{+}$ be given, and
let $\partial$ be as in \emph{(\ref{subdifoper})}. Then, the following
statements are equivalent provided that $V$ is continuous relative to
$\operatorname*{cl}(\dom A)$\emph{:}

\emph{(i)} $(V,W)$ is an $a$-Lyapunov pair for \emph{(\ref{id})} with
respect to $\operatorname*{cl}(\dom A);$

\emph{(ii)} $(V,W)$ is an $a$-Lyapunov pair for \emph{(\ref{id})} with
respect to $\operatorname*{rint}(\operatorname*{cl}(\dom A));$

\emph{(iii)} for every $y\in\operatorname*{rint}(\operatorname*{cl}%
(\dom A))\cap\dom V$
\[
\sup_{\xi\in\partial V(y)}\langle\xi,f(y)-\pi_{Ay}(f(y))\rangle+aV(y)+W(y)\leq
0;
\]

\emph{(iv) }for all $y\in\operatorname*{rint}(\operatorname*{cl}%
(\dom A))\cap\dom V$
\[
\inf_{\upsilon \in Ay}V^{\prime}(y;f(y)-\upsilon )+aV(y)+W(y)\leq0.
\]

\end{corollary}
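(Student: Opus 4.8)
The plan is to deduce the whole corollary from Theorem \ref{lyapfinite}, the real novelty being a single observation: continuity of $V$ relative to $\operatorname*{cl}(\dom A)$ forces the regularity condition (\ref{regular}). First I would dispose of the cheap implication (i)$\Rightarrow$(ii): since $\operatorname*{rint}(\operatorname*{cl}(\dom A))\subseteq\operatorname*{cl}(\dom A)$, an $a$-Lyapunov pair with respect to the larger set is a fortiori one with respect to the smaller, so nothing is needed here.

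Next I would treat (ii)$\Leftrightarrow$(iii)$\Leftrightarrow$(iv), which is in essence a reading of Theorem \ref{lyapfinite}. Statement (iv) is \emph{verbatim} condition (iv) of that theorem, so its equivalence with (ii) is immediate. Statement (iii) is the dual ``projection'' condition but written with the general operator $\partial$ in place of $\partial_P$. Because $\partial_P V\subseteq\partial V$, condition (iii) trivially implies condition (i) of Theorem \ref{lyapfinite}, hence (ii). For the converse I would start from the primal condition (iii) of Theorem \ref{lyapfinite}, namely $V'(y;f(y)-\pi_{Ay}(f(y)))+aV(y)+W(y)\le 0$ on $\operatorname*{rint}(\operatorname*{cl}(\dom A))\cap\dom V$, together with the elementary inequality $\langle\xi,v\rangle\le V'(y;v)$ valid for every Fr\'echet (in particular proximal) subgradient $\xi$; this already yields (iii) when $\partial\equiv\partial_P$ or $\partial_F$. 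To cover a general $\partial\subseteq\partial_L$ I would reuse the limiting argument from the proof of Proposition \ref{heartlocalfinitedimoension} in its step (v)$\Rightarrow$(iii with $\partial\equiv\partial_L$): write $\xi=\lim_k\xi_k$ with $\xi_k\in\partial_P V(y_k)$ and $y_k\to y$, apply the proximal case at each $y_k$, and pass to the limit.

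The step I expect to be the main obstacle is precisely this passage to the limit in (iii). The minimal-norm directions $f(y_k)-\pi_{Ay_k}(f(y_k))$ need not converge to $f(y)-\pi_{Ay}(f(y))$, since the principal-section map is in general discontinuous on $\inte(\dom A)$. Here I would use that $y\in\operatorname*{rint}(\operatorname*{cl}(\dom A))$ lies in the interior of the domain of the reduced operator $A_0$, so $A$ is locally bounded near $y$; the projections then stay bounded and, along a subsequence, converge to some point of $Ay$ by closedness of $\operatorname*{gph}A$. Continuity of $V$ guarantees $V(y_k)\to V(y)$ and lower semicontinuity of $W$ controls the $W$-term, while the monotonicity of $A$ combined with the minimal-norm characterization of the projection is what must be exploited to realign the limiting direction with $f(y)-\pi_{Ay}(f(y))$; this alignment is the delicate point of the whole argument.

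Finally, (ii)$\Rightarrow$(i) is where continuity is genuinely used. Continuity of $V$ relative to $\operatorname*{cl}(\dom A)$ gives, for every $y\in\operatorname*{cl}(\dom A)\cap\dom V$, that $V(z)\to V(y)$ as $\dom A\ni z\to y$, hence $\liminf_{\dom A\ni z\to y}V(z)=V(y)$; that is, (\ref{regular}) holds with $D=\operatorname*{cl}(\dom A)$. Since (ii) is equivalent to the conditions of Theorem \ref{lyapfinite}, the concluding clause of that theorem upgrades any of them to an $a$-Lyapunov pair with respect to $\operatorname*{cl}(\dom A)$, which is exactly (i). Together with (i)$\Rightarrow$(ii) this closes the cycle and, with the previous paragraph, establishes the full equivalence of (i)--(iv).
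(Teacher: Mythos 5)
Most of what you do coincides with the paper's (essentially one-line) proof, which simply invokes Theorem \ref{lyapfinite}: (i)$\Rightarrow$(ii) is indeed immediate by restriction of the defining inequality to the smaller set; statement (iv) is verbatim item (iv) of the theorem; and your key observation that continuity of $V$ relative to $\operatorname{cl}(\dom A)$ forces (\ref{regular}) with $D=\operatorname{cl}(\dom A)$, so that the theorem's final clause upgrades (ii) back to (i), is exactly the intended mechanism. Your handling of (iii) is also complete in the range $\partial_{P}\subseteq\partial\subseteq\partial_{F}$, via the elementary inequality $\langle\xi,v\rangle\leq V^{\prime}(y;v)$ for Fr\'echet subgradients.

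However, the step you yourself flag as ``the delicate point'' --- realigning the limiting direction with $f(y)-\pi_{Ay}(f(y))$ for $\partial$ up to $\partial_{L}$ --- is a genuine gap, and it cannot be closed by the tools you invoke. The limiting argument borrowed from Proposition \ref{heartlocalfinitedimoension} yields, along a subsequence, $\pi_{Ay_{k}}(f(y_{k}))\rightarrow\bar{\upsilon}\in Ay$ and hence only $\langle\xi,f(y)-\bar{\upsilon}\rangle+aV(y)+W(y)\leq0$, which is the inf-form of Theorem \ref{lyapfinite}(ii), not the projection form of (iii). Monotonicity together with the minimal-norm characterization gives $\langle f(y)-\pi_{Ay}(f(y)),\bar{\upsilon}-\pi_{Ay}(f(y))\rangle\leq0$, an inequality about the direction $f(y)-\pi_{Ay}(f(y))$, which says nothing about $\langle\xi,\bar{\upsilon}-\pi_{Ay}(f(y))\rangle$ for an unrelated limiting subgradient $\xi$. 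In fact the implication fails: take $H=\mathbb{R}^{2}$, $A=\partial\varphi$ with $\varphi(x_{1},x_{2})=|x_{1}|$, $f\equiv(0,1)$, $a=0$, $W\equiv0$, and $V(x_{1},x_{2}):=\max\{x_{1},0\}\,|x_{1}+x_{2}|^{1/2}$. Then $V$ is continuous and $(V,0)$ is a Lyapunov pair ($V$ vanishes on $\{x_{1}\leq0\}$ and, along trajectories in $\{x_{1}>0\}$, $x_{1}+x_{2}$ is constant while $x_{1}$ decreases), yet the gradients of $V$ at the smooth points $(\varepsilon,\varepsilon^{2}-\varepsilon)$, namely $(\varepsilon+\tfrac12,\tfrac12)$, give $(\tfrac12,\tfrac12)\in\partial_{L}V(0,0)$ with $V(\varepsilon,\varepsilon^{2}-\varepsilon)=\varepsilon^{2}\rightarrow V(0,0)$, while $f(0,0)-\pi_{A(0,0)}(f(0,0))=(0,1)-(0,0)=(0,1)$, so the supremum in (iii) with $\partial=\partial_{L}$ is at least $\tfrac12>0$. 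Thus no argument can supply the ``alignment'' you defer: for $\partial$ strictly beyond the Fr\'echet range, the projection inequality in (iii) must either carry $\partial_{P}$, as in Theorem \ref{lyapfinite}(i), or be replaced by the inf-form of Theorem \ref{lyapfinite}(ii); as literally stated, item (iii) of the corollary asserts more than Theorem \ref{lyapfinite} delivers, and your proposal inherits exactly this unprovable step.
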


The characterization of G\^ateaux differentiable Lyapunov functions is a special
case of the following corollary.


\begin{corollary}
\label{SmoothLyap}Assume that $\dim H<\infty.$ Let $V\in\mathcal{F}(H)$,
$W\in\mathcal{F}(H,\mathbb{R}_{+}),$ and $a\in{\mathbb{R}}_{+}$ be given. If
$V$ is G\^ateaux differentiable, then the following statements are
equivalent\emph{:}

\emph{(i)} $(V,W)$ is an $a$-Lyapunov pair for \emph{(\ref{id})} with
respect to $\operatorname*{cl}(\dom A);$

\emph{(ii)} $(V,W)$ is an $a$-Lyapunov pair for \emph{(\ref{id})} with
respect to $\operatorname*{rint}(\operatorname*{cl}(\dom A));$

\emph{(iii)} for every $y\in\operatorname*{rint}(\operatorname*{cl}%
(\dom A))\cap\dom V$
\[
V_{G}^{\prime}(y)(f(y)-\pi_{Ay}(f(y)))+aV(y)+W(y)\leq0;
\]

\emph{(iv) }for all $y\in\operatorname*{rint}(\operatorname*{cl}%
(\dom A))\cap\dom V$
\[
\inf_{\upsilon \in Ay}V_{G}^{\prime}(y)(f(y)-\upsilon )+aV(y)+W(y)\leq0.
\]

\end{corollary}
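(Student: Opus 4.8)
The plan is to obtain Corollary \ref{SmoothLyap} as the smooth specialization of Theorem \ref{lyapfinite} and Corollary \ref{convcont}: once the general (sub)differential data are identified with the single gradient $V_G'$, all four statements become verbatim instances of results already proved. First I would note that a G\^ateaux differentiable function is continuous relative to $\operatorname*{cl}(\dom A)$ (we are in finite dimensions, and the differentiability is used on the open set $\operatorname*{rint}(\operatorname*{cl}(\dom A))$), so the standing hypothesis of Corollary \ref{convcont} holds. This already delivers the equivalence (i)$\Leftrightarrow$(ii) and reduces the task to matching the pointwise inequalities (iii) and (iv) of Corollary \ref{SmoothLyap} against the directional-derivative conditions (iii) and (iv) of Theorem \ref{lyapfinite} on $\operatorname*{rint}(\operatorname*{cl}(\dom A))\cap\dom V$.

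The whole substance then lies in the single identity $V'(y;v)=\langle V_G'(y),v\rangle$ for every direction $v$, valid at the points $y$ where $V$ is G\^ateaux differentiable. One inequality is free: taking $w=v$ in the liminf defining the lower Dini derivative gives $V'(y;v)\le\langle V_G'(y),v\rangle$. Granting this alone, condition \ref{SmoothLyap}(iii) dominates $V'(y;f(y)-\pi_{Ay}(f(y)))+aV(y)+W(y)$ and \ref{SmoothLyap}(iv) dominates $\inf_{\upsilon\in Ay}V'(y;f(y)-\upsilon)+aV(y)+W(y)$, so each of \ref{SmoothLyap}(iii), \ref{SmoothLyap}(iv) implies the corresponding condition of Theorem \ref{lyapfinite}, hence the Lyapunov property (i)--(ii). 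I would also record here the trivial implication \ref{SmoothLyap}(iii)$\Rightarrow$\ref{SmoothLyap}(iv): since $\pi_{Ay}(f(y))\in Ay$ and $V_G'(y)$ is linear, the infimum in (iv) is taken over a set that contains the distinguished point $f(y)-\pi_{Ay}(f(y))$.

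The reverse inequality $\langle V_G'(y),v\rangle\le V'(y;v)$ is the main obstacle, since the general relations linking $V_G'$ to the Dini derivative and to the subdifferentials (for instance $\partial_P V(y)\subseteq\{V_G'(y)\}\subseteq\partial_C V(y)$, recorded among the preliminary properties of the subdifferentials) are all one-sided, and pointwise G\^ateaux differentiability is by itself too weak to force equality. To close this gap I would use that $V$ is G\^ateaux differentiable throughout the \emph{open} set $\operatorname*{rint}(\operatorname*{cl}(\dom A))$, not merely at isolated points: this is the regularity that lets one upgrade the radial G\^ateaux limit to the full (two-sided, perturbed-direction) Dini limit, giving $V'(y;\cdot)=\langle V_G'(y),\cdot\rangle$. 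Once this identity is established, conditions \ref{SmoothLyap}(iii) and \ref{SmoothLyap}(iv) are \emph{verbatim} conditions (iii) and (iv) of Theorem \ref{lyapfinite}, so the equivalences \ref{SmoothLyap}(iii)$\Leftrightarrow$\ref{SmoothLyap}(iv)$\Leftrightarrow$(ii) follow from that theorem and, combined with the continuity reduction supplied by Corollary \ref{convcont}, also with (i). The delicate point to verify carefully is exactly this upgrade from pointwise G\^ateaux differentiability on an open set to equality of the G\^ateaux and lower Dini derivatives.
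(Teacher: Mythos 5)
Your overall architecture---funnel everything through Theorem \ref{lyapfinite} (and Corollary \ref{convcont}), identifying the subdifferential data with the single gradient $V_G^{\prime}$---is exactly what the paper intends, since the paper states Corollary \ref{SmoothLyap} with no proof at all, as an immediate specialization. But both auxiliary facts you invoke are false as stated. First, G\^ateaux differentiability does \emph{not} imply continuity, even for lsc functions in finite dimensions: the function $g(x_{1},x_{2}):=\bigl(x_{1}^{3}x_{2}/(x_{1}^{6}+x_{2}^{2})\bigr)^{2}$, $g(\theta):=0$, is lsc and G\^ateaux differentiable at every point of ${\mathbb{R}}^{2}$ (with $g_{G}^{\prime}(\theta)=\theta$), yet equals $1/4$ along the curve $x_{2}=x_{1}^{3}$ and so is discontinuous at the origin. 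Hence the standing hypothesis of Corollary \ref{convcont} is not automatic. This part is repairable: G\^ateaux differentiability at $y$ does give \emph{radial} continuity $V(y+tv)\rightarrow V(y)$ as $t\rightarrow0^{+}$, and taking $v=z_{0}-y$ with $z_{0}\in\operatorname*{rint}(\operatorname*{cl}(\dom A))$ (so that $y+tv\in\operatorname*{rint}(\operatorname*{cl}(\dom A))\subset\dom A$ by the line segment principle) shows that condition (\ref{regular}) holds; the final assertion of Theorem \ref{lyapfinite} then yields (ii)$\Rightarrow$(i), while (i)$\Rightarrow$(ii) is trivial.

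The serious, unrepaired gap is your key identity $V^{\prime}(y;\cdot)=\langle V_{G}^{\prime}(y),\cdot\rangle$. G\^ateaux differentiability on an open set does \emph{not} upgrade the radial limit to the perturbed-direction Dini liminf; that upgrade requires Fr\'echet differentiability or local Lipschitz continuity. Concretely, let $\phi$ be a smooth bump with support in $[1/2,2]$ and $\phi(1)=1$, and set $V(x_{1},x_{2}):=-x_{1}\phi(x_{2}/x_{1}^{2})$ for $x_{1}>0$, $V:=0$ for $x_{1}\leq0$. Then $V$ is continuous ($|V|\leq|x_{1}|$) and G\^ateaux differentiable at \emph{every} point of ${\mathbb{R}}^{2}$---at the origin each ray eventually leaves the region $1/2\leq x_{2}/x_{1}^{2}\leq2$, so $V_{G}^{\prime}(\theta)=\theta$---yet taking $w_{t}=(1,t)\rightarrow e_{1}$ gives $V(tw_{t})=V(t,t^{2})=-t$, whence $V^{\prime}(\theta;e_{1})\leq-1<0=\langle V_{G}^{\prime}(\theta),e_{1}\rangle$. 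Only the inequality $V^{\prime}(y;v)\leq\langle V_{G}^{\prime}(y),v\rangle$ survives, and it runs in the one direction you correctly exploit: it (or, equivalently, the inclusion $\partial_{P}V(y)\subset\{V_{G}^{\prime}(y)\}$ fed into criteria (i)--(ii) of Theorem \ref{lyapfinite}) gives the sufficiency implications \ref{SmoothLyap}(iii)/(iv)$\Rightarrow$(ii), and your observation that (iii)$\Rightarrow$(iv) is trivial is fine. But for the necessity direction (ii)$\Rightarrow$(iii), Theorem \ref{lyapfinite}(iii) only delivers the Dini inequality $V^{\prime}(y;f(y)-\pi_{Ay}(f(y)))+aV(y)+W(y)\leq0$, which is strictly weaker than the G\^ateaux version precisely when the gap above occurs (the trajectory $x(t;y)=y+t(f(y)-\pi_{Ay}(f(y)))+o(t)$ is a curved perturbation of the ray, which pointwise G\^ateaux differentiability does not control). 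Closing this needs either a stronger smoothness hypothesis (Fr\'echet, or G\^ateaux plus local Lipschitz continuity, under which the identity you want is genuine) or a new argument exploiting the Lyapunov inequality at all nearby initial points---none of which your proposal supplies; and since the paper proves nothing here, there is no argument in the text to fall back on either.
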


In order to fix ideas,  let us discuss the simple case when $A\equiv0$ so that our
inclusion (\ref{id}) becomes an ordinary differential equation which reads:
for every $y\in H$ there exists a unique $x\cdot; y)\in C^{1}(0,\infty;H)$
such that $x(0,y)=y$ and
\begin{equation}
\dot{x}(t;y)=f(x(t; y))\text{ for all }t\geq0. \label{ide}%
\end{equation}
In this case, Theorem \ref{heartlocal} gives in a simplified form the
characterization of the associated $a$-Lyapunov pairs.


\begin{corollary}
\label{ODELyap}Assume that $\dim H<\infty.$ Let be given $V\in\mathcal{F}(H),$
$W\in\mathcal{F}(H;\overline{{\mathbb{R}}}_{+}),$ and $a\in{\mathbb{R}}_{+}$.
The following stataments are equivalent\emph{:}

\emph{(i)} $(V,W)$ is an $a$-Lyapunov pair for \emph{(\ref{ide})} (with
respect to $H)$\emph{;}

\emph{(ii)} for every $y\in\dom V$
\[
V^{\prime}(y;f(y))+aV(y)+W(y)\leq0;
\]

\emph{(iii)} for all $y\in\dom V$
\[
\sup_{\xi\in\partial V(y)}\langle\xi,f(y)\rangle+aV(y)+W(y)\leq0,
\]

\quad\newline where $\partial V$ stands for any subdifferential operator
verifying $\partial_{P}V\subset\partial V\subset\partial_{C}V.$
\end{corollary}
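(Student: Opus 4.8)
The plan is to derive all three statements as specializations of Theorem \ref{lyapfinite}, after recording what the data become when $A\equiv0$. The operator $A\equiv0$ is maximally monotone with $\dom A=H$, so in finite dimension $\inte(\operatorname*{co}\{\dom A\})=H\neq\emptyset$ and the (unique, weak and strong) solution of (\ref{ide}) is the classical $C^{1}$ flow of $\dot{x}=f(x)$. Here $\operatorname*{cl}(\dom A)=\operatorname*{rint}(\operatorname*{cl}(\dom A))=H$, and $Ay=\{\theta\}$ for every $y$, whence $\pi_{Ay}(f(y))=\theta$ and $\inf_{\upsilon\in Ay}\langle\xi,f(y)-\upsilon\rangle=\langle\xi,f(y)\rangle$. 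In particular, being an $a$-Lyapunov pair with respect to $\operatorname*{rint}(\operatorname*{cl}(\dom A))$ is the same as being one with respect to $H$, which is statement (i).

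First I would simply read off Theorem \ref{lyapfinite} with these substitutions. Its assertion (iii) becomes statement (ii) of the corollary, its assertion (i) becomes statement (iii) for the choice $\partial\equiv\partial_{P}$, and its assertion (ii) becomes statement (iii) for an arbitrary $\partial V$ with $\partial_{P}V\subset\partial V\subset\partial_{L}V$. Together with the remark that (i) is the Lyapunov-pair condition with respect to $\operatorname*{rint}(\operatorname*{cl}(\dom A))=H$, this delivers for free the equivalences
\[
\text{(i)}\iff\text{(ii)}\iff\bigl[\text{(iii) for every }\partial V\text{ with }\partial_{P}V\subset\partial V\subset\partial_{L}V\bigr].
\]
What remains is to enlarge the admissible range of subdifferentials from $\partial_{L}V$ up to $\partial_{C}V$, and this is where the genuine work lies.

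To do so I would first note that, because $\xi\mapsto\langle\xi,f(y)\rangle$ is linear and $\partial_{P}V(y)\subset\partial V(y)\subset\partial_{C}V(y)$, the supremum in (iii) is monotone in $\partial V$; hence (iii) with the largest choice $\partial_{C}V$ already implies (iii) with $\partial_{P}V$ and therefore (i). For the converse, assuming (i), I would fix $y\in\dom V$ and use the representation $\partial_{C}V(y)=\overline{\operatorname*{co}}^{w}\{\partial_{L}V(y)+\partial_{\infty}V(y)\}$. Since the support function of the linear form $\langle\cdot,f(y)\rangle$ is additive over Minkowski sums and insensitive to the closed convex hull, this gives
\[
\sup_{\xi\in\partial_{C}V(y)}\langle\xi,f(y)\rangle=\sup_{\eta\in\partial_{L}V(y)}\langle\eta,f(y)\rangle+\sup_{\zeta\in\partial_{\infty}V(y)}\langle\zeta,f(y)\rangle,
\]
whose first term is already bounded by $-aV(y)-W(y)$ through the equivalence with (iii) for $\partial_{L}V$.

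The hard part, and the main obstacle, will be the second term, over the horizontal subgradients $\partial_{\infty}V(y)$, which lie strictly beyond the range $[\partial_{P}V,\partial_{L}V]$ reachable from Theorem \ref{lyapfinite}. Here I would reuse the argument of Corollary \ref{cn2}: since $\partial_{\infty}V(y)$ is a cone, $\sup_{\zeta\in\partial_{\infty}V(y)}\langle\zeta,f(y)\rangle\in\{0,+\infty\}$, so it suffices to show that (i) forces $\langle\zeta,f(y)\rangle\le0$ for every $\zeta\in\partial_{\infty}V(y)$. Specializing the proof of Corollary \ref{cn2} to $A\equiv0$ does precisely this: writing $\zeta$ as a weak limit $\alpha_{k}\xi_{k}\rightharpoonup\zeta$ with $\alpha_{k}\downarrow0$, $\xi_{k}\in\partial_{P}V(y_{k})$, $y_{k}\to y$ and $V(y_{k})\to V(y)$, the primal condition at each $y_{k}$ (whose selected element of $Ay_{k}$ is forced to be $\theta$) reads $\langle\xi_{k},f(y_{k})\rangle+aV(y_{k})+W(y_{k})\le\varepsilon$; multiplying by $\alpha_{k}$, discarding the nonnegative term $\alpha_{k}W(y_{k})$, and letting $k\to\infty$ yields $\langle\zeta,f(y)\rangle\le0$. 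Hence the second supremum vanishes, so $\sup_{\xi\in\partial_{C}V(y)}\langle\xi,f(y)\rangle+aV(y)+W(y)\le0$, i.e.\ (iii) holds for $\partial_{C}V$. Monotonicity of the supremum in $\partial V$ then propagates this to every intermediate operator, closing the equivalence of (i), (ii) and (iii).
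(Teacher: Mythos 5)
Your proposal is correct and follows essentially the same route as the paper's proof: specialize Theorem \ref{lyapfinite} (with $A\equiv 0$, so $\pi_{Ay}(f(y))=\theta$ and $\operatorname*{rint}(\operatorname*{cl}(\dom A))=H$) to get the equivalences for every $\partial$ with $\partial_{P}V\subset\partial V\subset\partial_{L}V$, then pass from $\partial_{L}$ to $\partial_{C}$ via the representation formula $\partial_{C}V(y)=\overline{\operatorname*{co}}^{w}\{\partial_{L}V(y)+\partial_{\infty}V(y)\}$ after checking $\sup_{\zeta\in\partial_{\infty}V(y)}\langle\zeta,f(y)\rangle\leq 0$. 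Your only departure is cosmetic and in fact an improvement in completeness: the paper merely asserts the horizontal-subgradient bound, whereas you justify it explicitly by the scaling argument ($\alpha_{k}\xi_{k}\rightharpoonup\zeta$, multiply the primal inequality at $y_{k}$ by $\alpha_{k}$ and pass to the limit) borrowed from the proof of Corollary \ref{cn2}, which is precisely the mechanism the paper implicitly invokes.
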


\begin{proof}
By Theorem \ref{lyapfinite} the conclusion  holds for all the subdifferentials
$\partial V$ such that $\partial_{P}V\subset\partial V\subset\partial_{L}V.$
To show that (iii) is also a characterization when $\partial\equiv\partial
_{C}$ it suffices, in view of the relationship $\partial_{L}\subset
\partial_{C},$ to show that (iii with $\partial\equiv\partial_{L}$) implies
(iii with $\partial\equiv\partial_{C}$). Indeed, fix $y\in\dom %
V$ so that
\[
\sup_{\xi\in\partial_{\infty}V(y)}\langle\xi,f(y)\rangle\leq0.
\]
So, according to \cite {RockBook2002}, (iii with $\partial\equiv\partial_{C}$) follows
since that
\[
\sup_{\xi\in\partial_{C}V(y)}\langle\xi,f(y)\rangle+aV(y)+W(y)=\sup_{\xi
\in\overline{\operatorname*{co}}\{\partial_{L}V(y)+\partial_{\infty}%
V(y)\}}\langle\xi,f(y)\rangle+aV(y)+W(y)\leq0.
\]
\smartqed 
\end{proof}
\begin{acknowledgement}
 Research partially supported by the project BQR R\'eseaux  B005/R1107005
from the university of Limoges, by the Australian Research Council under grant DP-110102011 and by the
ECOS-SUD C10E08 project.
\end{acknowledgement}
\bibliographystyle{plain}
\bibliography{myrefsAHTII}

\end{document}